\def\E{{\mathcal E}}
\newtheorem{remark}{Remark}[section]
\def\O{\Omega}
\def\pa{\partial}
\def\3bar{{|\hspace{-.02in}|\hspace{-.02in}|}}
\newtheorem{gWG}{Generalized weak Galerkin scheme}
\begin{document}

\setlength{\parindent}{0.25in} \setlength{\parskip}{0.08in}

\title{ 
GENERALIZED WEAK GALERKIN FINITE ELEMENT METHODS
FOR SECOND ORDER ELLIPTIC PROBLEMS}

\author{
Dan Li\thanks{ {Jiangsu Key Laboratory for NSLSCS, School of Mathematical Sciences,  Nanjing Normal University, Nanjing 210023, 
China (danlimath@163.com). The research of Dan Li was supported by China National Natural Science Foundation Grant (No. 12071227).}}
\and
Chunmei Wang \thanks{Department of Mathematics, University of Florida, Gainesville, FL 32611 (chunmei.wang@ufl.edu).  The research of Chunmei Wang was partially supported by National Science Foundation Grants DMS-2136380 and DMS-2206332.}
\and
Junping Wang\thanks{Division of Mathematical Sciences, National Science Foundation, Alexandria, VA 22314 (jwang@nsf.gov). The research of Junping Wang was supported by the NSF IR/D program, while working at National Science Foundation. However, any opinion, finding, and conclusions or recommendations expressed in this material are those of the author and do not necessarily reflect the views of the National Science Foundation.}
\and
Xiu Ye \thanks{Department of Mathematics, University of Arkansas at Little Rock, Little Rock, AR 72204, USA (xxye@ualr.edu).}}

\maketitle

\begin{abstract}
This article proposes and analyzes the generalized weak Galerkin ({\rm g}WG) finite element method for the second order elliptic problem. A generalized discrete weak gradient operator is introduced in the weak Galerkin framework so that the {\rm g}WG  methods would not only allow arbitrary combinations of piecewise polynomials defined in the interior and on the boundary of each local finite element, but also work on general polytopal partitions. Error estimates are established for the corresponding numerical functions in the energy norm and the usual $L^2$ norm. A series of numerical experiments are presented to demonstrate the performance of the newly proposed {\rm g}WG method.
\end{abstract}

\begin{keywords}
generalized weak Galerkin, {\rm g}WG, finite element methods, generalized discrete weak gradient, second order elliptic problems, polytopal partitions.
\end{keywords}

\begin{AMS}
Primary 65N30, 65N15; Secondary 35J50.
\end{AMS}

\section{Introduction}
This paper will study the generalized weak Galerkin methods for the second order elliptic problems. For simplicity, we consider the second order model problem that seeks an unknown function $u$ satisfying
\begin{equation}\label{model-problem}
\begin{split}
-\nabla\cdot({\color{black}{a}}\nabla u)&=f,\quad \mbox{in}~~\O,\\
                      u&=g,\quad \mbox{on}~~\pa\O,
\end{split}
\end{equation}
where $\Omega$ is a bounded polytopal domain in $\mathbb{R}^d$$ (d=2,3)$ and the coefficient tensor ${\color{black}{a}}\in \mathbb{R}^{d\times d}$ is  symmetric and uniformly positive definite. 

The weak formulation of the model problem \eqref{model-problem} is as follows: Find $u\in H^1(\Omega)$ satisfying $u=g$ on $\partial\Omega$, such that
\begin{equation}\label{weakform}
({\color{black}{a}}\nabla u,\nabla v)=(f,v), \qquad\forall v\in H_0^1(\Omega),
\end{equation}
where $H_0^1(\Omega)=\{v\in H^1(\Omega): v=0\ \text{on}\ \partial\Omega\}$.

 Numerous numerical methods have been developed for solving the second order elliptic problems. The conforming finite element method is widely employed in scientific and engineering applications due to its simplicity and robustness. However, for certain model problems involving high-order partial differential equations, it is difficult to construct  the conforming finite element. To address this challenge, several numerical methods have been introduced, such as the discontinuous Galerkin method
\cite{ABCM_1999,ABCM_2001}, the hybrid discontinuous Galerkin method \cite{BGL_2009}, the mimetic finite differences method \cite{LMBB2011}, the hybrid-high order method \cite{PT_2018}, the virtual element method \cite{VBCMR_2013,VBMR_2016} and the weak Galerkin  finite element method \cite{WWL-2022-cur,MWWY2012,ellip_MC2014,ellip_reduced2015,ellip_WMGM2014,Wang-JSC-2018,ellip_XYZ2020}.

The weak Galerkin methods were first proposed in \cite{ellip_MC2014} for the second order elliptic problems. The most novel aspect of WG methods is the introduction of locally designed weak partial derivatives. This innovation allows WG methods to offer several advantages, including high flexibility in polynomial approximations and mesh generation.  WG methods have been widely applied in solving a diverse range of PDEs
\cite{cwwinterface,cwwdivcurl,lwwhyper,wcauchy2,wwtrans,wwcauchy,wwnondiv,wzcondif,WW-fp-2020}. Later on, the primal-dual weak Galerkin (PDWG) methods were proposed to simulate certain model problems that are challenging to solve using the traditional numerical methods, such as second-order elliptic equations in non-divergence form \cite{PDWGMC}, the Fokker-Planck equations \cite{DW2020,WW-fp-2018}, the elliptic Cauchy problems \cite{ellipCau_WW2020}, the first-order transport problems \cite{wwhyperbolic}, the div-curl systems with low-regularity solutions \cite{CWW-divcurl-2021,LW-divcurl-2020}. Recently, the PDWG methods have been extended to a more general $L^p$ setting by using the $L^p$-primal-dual weak Galerkin methods. The $L^p$-PDWG methods have been developed for div-curl systems \cite{CWWdiv}, second order elliptic equations in non-divergence form \cite{CWWsecond}, convection-diffusion equations \cite{CWW2023}, transport problems \cite{WWL-2023-tran}.



This paper aims to develop a generalized weak Galerkin method for the second order elliptic problem \eqref{model-problem}. 
 Different combinations of finite elements lead to different weak Galerkin methods in which a typical {\rm g}WG element is of the form $P_k(T)/P_j(\pa T)/[P_{\ell}(T)]^d$ where the weak function is discretized by the polynomial spaces $P_k(T)$ and $P_j(\pa T)$, and its generalized discrete weak gradient is approximated by the vector-valued space $[P_{\ell}(T)]^d$.  
 We have rigorously established the theory for the error estimates in a discrete norm  and the usual $L^2$ norm for the newly proposed {\rm g}WG methods. A series of numerical results have been demonstrated to verify the established theory. Compared with other existing results on the standard weak Galerkin methods, the {\rm g}WG methods can achieve a convergence rate in a  higher order  
for some combinations of weak finite elements. For example, for the $P_1(T)/P_0(\pa T)/[P_1(T)]^2$ element,   the standard WG method diverges;  while the {\rm g}WG method converges in an order  $ \mathcal {O}(h^2) $ in the $L^2$ norm.

The paper is organized as follows. In Section \ref{Section:WeakGradient}, we introduce the definition of a generalized discrete weak gradient. Section \ref{Section:WG} presents the generalized weak Galerkin scheme   for the model problem \eqref{model-problem}. Section \ref{Section:EQ} derives an error equation for the generalized weak Galerkin scheme.  Section \ref{Section:TRs}   presents some technical results. Section \ref{Section:EEs} is devoted to establishing some error estimates for the numerical approximation in a discrete norm and the usual $L^2$ norm. 
In Section \ref{Section:EE},  various numerical experiments are demonstrated.

Throughout this paper, we will follow the standard definitions for the Sobolev spaces and norms. Let $D$ be any open bounded domain with Lipschitz continuous boundary in $\mathbb{R}^d (d=2,3)$. Denote by $|\cdot|_{s, D}$ and $\|\cdot\|_{s,D}$ the seminorm and norm in the Sobolev space $H^s(D)$ for any integer $s\geq 0$, respectively. When $s=0$, the inner product and norm are denoted by $(\cdot, \cdot)_{D}$ and $\|\cdot\|_{D}$, respectively. When $D=\O$, the subscript $D$ shall be dropped in the corresponding inner product, seminorm and norm. We use the notation ``$\lesssim$'' to mean ``no greater than a generic positive constant independent of the meshsize or functions appearing in the inequalities''.

\section{Generalized Discrete Weak Gradient}\label{Section:WeakGradient}

The goal of this section is to define the generalized discrete weak gradient. To this end, let ${\cal T}_h$ be a finite element partition of $\O$ that satisfies the shape regular assumption as described as in \cite{ellip_MC2014}. Denote by $\E_h$ the set of all edges or flat faces in ${\cal T}_h$ and $\E_h^0=\E_h\setminus\pa\O$ the set of all interior edges or flat faces, respectively. Let $h_T$ be the diameter of $T\in{\cal T}_h$ and $h=\max_{T\in {\cal T}_h}h_T$ be the meshsize of the partition ${\cal T}_h$. For any given integer $r\geq0$, denote by $P_r(T)$  the set of polynomials defined on $T$ with degree no more than $r$.

Let $T\in{\cal T}_h$ be any polytopal element with boundary $\pa T$. By a weak function on $T$ we mean $v=\{v_0,v_b\}$ with $v_0\in L^2(T)$ and $v_b\in L^2(\pa T)$. The first component $v_0$ and the second component $v_b$ represent the values of $v$ in the interior and on the boundary of $T$, respectively. It should be pointed out that $v_b$ may not necessarily be the trace of $v_0$ on $\pa T$. 
Let $k\geq0$ and $j\geq0$ be two given integers. Let $V_{k,j}(T)$ be the local weak function space on each $T\in{\cal T}_h$ given by
$$
V_{k,j}(T)=\{v=\{v_0,v_b\}:v_k\in P_k(T),v_b\in P_j(e),~~e\subset \pa T\}.
$$ 

\begin{definition} (Generalized discrete weak gradient)
Let $\ell\geq0$ be a given integer. A generalized discrete weak gradient for any weak function $v\in V_{k,j}(T)$, denoted by $\nabla_{g,T}v$, is given by
\begin{equation}\label{weak gradient-1}
\begin{split}
\nabla_{g,T}v:=\nabla v_0+\delta_g v,
\end{split}
\end{equation}
where  $\delta_g v\in [P_\ell(T)]^d$ satisfies
\begin{equation}\label{weak gradient-2}
(\delta_gv,\boldsymbol{\psi})_T:=\langle v_b-Q_b v_0,\boldsymbol{\psi}\cdot\textbf{n}\rangle_{\pa T},\quad
    \forall\boldsymbol{\psi}\in[P_\ell(T)]^d,
\end{equation}
where $\textbf{n}$ is the unit outward normal direction to $\pa T$, and $Q_b$ is the usual $L^2$ projection operator onto $P_j(e)$.
\end{definition}

\section{Generalized Weak Galerkin Scheme}\label{Section:WG}

This section presents a generalized weak Galerkin scheme for the model problems \eqref{model-problem}. 
For simplicity of analysis, assume that  the coefficient ${\color{black}{a}}$ in \eqref{model-problem} is piecewise constant with respect to the finite element partition ${\cal T}_h$. The following result can be easily extended to variable coefficient tensor, provided that the tensor ${\color{black}{a}}$ is piecewise sufficiently smooth.

The global weak finite element space $V_h$ is obtained by patching   the local weak function space $V_{k,j}(T)$ over all the elements through a common value $v_b$ on the interior edges or faces $\mathcal{E}_h^0$; i.e.,
$$
V_h=\{v=\{v_0,v_b\}:v|_T\in V_{k,j}(T),~~T\in{\cal T}_h\}.
$$
Denote by $V_h^0$ the subspace of $V_h$ consisting of the weak functions with vanishing boundary value on $\pa\O$ given by
$$
V_h^0=\{v: v\in V_h,~v_b|_e=0,~~e\subset\pa\O\}.
$$

For simplicity of the notation, denote by $\nabla_g$ the generalized discrete weak gradient $\nabla_{g,T}$ computed by \eqref{weak gradient-1}-\eqref{weak gradient-2}; i.e.,
$$(\nabla_gv)|_T=\nabla_{g,T}(v|_T),~~~~~~~v\in V_h.$$

For any $w,v\in V_h$, we introduce the following bilinear forms; i.e.,
\begin{equation*}\label{stabilizer}
\begin{split}
&a(w,v)=\sum_{T\in{\cal T}_h}({\color{black}{a}}\nabla_{g}w,\nabla_{g}v)_T,\\
&s(w,v)=\sum_{T\in{\cal T}_h}\rho h_T^{\gamma}\langle Q_bw_0-w_b,Q_bv_0-v_b\rangle_{\pa T},
\end{split}
\end{equation*}
where $\rho>0$ and $\gamma\in \mathbb R.$
\begin{gWG}
A numerical approximation for the model problems \eqref{model-problem} based on the weak formulation \eqref{weakform} can be obtained by seeking $u_h=\{u_0,u_b\}\in V_h$ such that $u_b=Q_bg$ on $\pa\O$ satisfying
\begin{equation}\label{WG-scheme}
a(u_h,v)+s(u_h,v)=(f,v_0),\qquad \forall v\in V_h^0.
\end{equation}
\end{gWG}

\begin{lemma}\label{pre-error-equation}
The generalized weak Galerkin scheme \eqref{WG-scheme} has one and only one numerical approximation.
\end{lemma}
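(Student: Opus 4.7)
The scheme \eqref{WG-scheme} is a square linear system in the unknowns $\{u_0\}_{T\in{\cal T}_h}$ and $\{u_b\}_{e\in\E_h^0}$, since the boundary values $u_b=Q_bg$ on $\pa\O$ are prescribed and the test space $V_h^0$ carries matching degrees of freedom. Existence and uniqueness are therefore equivalent, and I will aim at uniqueness: setting $f=0$ and $g=0$, I claim the only $u_h\in V_h^0$ satisfying $a(u_h,v)+s(u_h,v)=0$ for every $v\in V_h^0$ is $u_h=0$.

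Test with $v=u_h$. Since the coefficient tensor ${\color{black}{a}}$ is uniformly positive definite and $\rho h_T^{\gamma}>0$, the identity $a(u_h,u_h)+s(u_h,u_h)=0$ forces, on each $T\in{\cal T}_h$,
\begin{equation*}
\nabla_g u_h\big|_T=0 \qquad\text{and}\qquad Q_b u_0=u_b \text{ on }\pa T.
\end{equation*}
Feeding $Q_b u_0=u_b$ into the defining relation \eqref{weak gradient-2} yields $(\delta_g u,\boldsymbol{\psi})_T=0$ for every $\boldsymbol{\psi}\in[P_\ell(T)]^d$. Since $\delta_g u$ itself lies in $[P_\ell(T)]^d$, choosing $\boldsymbol{\psi}=\delta_g u$ gives $\delta_g u=0$ on $T$. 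Combined with $\nabla_g u_h=\nabla u_0+\delta_g u=0$, this delivers $\nabla u_0=0$, so $u_0$ is constant on each element $T$.

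The final step is to propagate the elementwise constancy across the mesh. For any $e\subset\pa T$, since constants lie in $P_j(e)$, one has $Q_bu_0\big|_e=u_0\big|_T$, and the identity $Q_bu_0=u_b$ then forces $u_b\big|_e$ to equal the constant value of $u_0$ on the adjoining element. The single-valuedness of $u_b$ across interior edges transmits this constant from element to element, so $u_0$ is globally constant on the connected domain $\O$; the boundary condition $u_b=0$ on $\pa\O$, built into $V_h^0$, pins the constant to zero. Hence $u_0\equiv 0$ and $u_b\equiv 0$, establishing uniqueness.

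The only point requiring a little care is the deduction of elementwise constancy of $u_0$: one cannot simply read $\nabla u_0=-\delta_g u$ off of \eqref{weak gradient-1} and invoke a degree-counting argument, since $\nabla u_0\in[P_{k-1}(T)]^d$ and $\delta_g u\in[P_\ell(T)]^d$ live in possibly different polynomial spaces. The cleanest route is the one above, in which the stabilizer first eliminates the boundary-trace mismatch and then \eqref{weak gradient-2} forces $\delta_g u$ itself to vanish.
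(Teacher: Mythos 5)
Your argument is correct and follows essentially the same route as the paper: test with $v=u_h$, use positive definiteness to get $\nabla_g u_h=0$ and $Q_bu_0=u_b$, deduce $\delta_g u_h=0$ from \eqref{weak gradient-2} and hence $\nabla u_0=0$, then propagate the elementwise constant through the shared $u_b$ values to the boundary. Your explicit justification that $\delta_g u_h=0$ (rather than a degree-counting argument) is exactly the step the paper's proof compresses into ``using the generalized weak gradient \eqref{weak gradient-1}--\eqref{weak gradient-2},'' so the write-up is, if anything, slightly more careful than the original.
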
 
\begin{proof}
It suffices to show that the homogeneous
 {\rm gWG} scheme \eqref{WG-scheme} has only the trivial solution.  To this end, we take $f=0$ and $g=0$. Let  $v=u_h\in V_h^0$ in \eqref{WG-scheme} gives $(a\nabla_gu_h,\nabla_gu_h)=0$ and $s(u_h,u_h)=0$. This leads to $\nabla_gu_h=0$ on each $T$ and $Q_bu_0=u_b$ on each $\pa T$. Using the generalized weak gradient \eqref{weak gradient-1}-\eqref{weak gradient-2} gives $\nabla u_0=0$ on each $T$ and further $u_0=const$ on each $T$. It follows from $Q_bu_0=u_b$ on each $\pa T$ and $u_b=0$ on $\pa\O$ that $u_0=0$ in $\O$ and $u_b=0$ on each $\pa T$. This completes the proof of the lemma.
\end{proof}


\section{Error Equations}\label{Section:EQ}

This section is devoted to deriving an error equation for the {\rm g}WG scheme \eqref{WG-scheme}. To this end, on each element $T\in{\cal T}_h$, denote by $Q_0$ the usual $L^2$ projection projector onto $P_k(T)$. For each $\phi\in H^1(T)$, let $Q_h\phi\in V_h$ be the $L^2$ projection such that on each element $T$,  we have
$$Q_h\phi=\{Q_0\phi,Q_b\phi\}.$$

Let $s=\min\{j,\ell\}$. Denote by $\mathbb{Q}_s$ the usual $L^2$ projection operator onto $[P_s(T)]^d$.

\begin{lemma} \label{pre-commu}
For any $\boldsymbol{\psi}_s\in [P_s(T)]^d$ and $\phi\in H^1(T)$, there holds
$$
(\nabla_gQ_h\phi,\boldsymbol{\psi}_s)_T=(\nabla\phi,\boldsymbol{\psi}_s)_T+(\phi-Q_0\phi,\nabla\cdot\boldsymbol{\psi}_s)_T.
$$
\end{lemma}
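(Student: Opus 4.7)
The plan is to unpack the definition of $\nabla_g Q_h \phi$, apply integration by parts on both sides, and then exploit the two containments $[P_s(T)]^d \subset [P_\ell(T)]^d$ and $P_s(e) \subset P_j(e)$ implied by $s = \min\{j,\ell\}$ to make the remaining boundary terms cancel.

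First I would write, by \eqref{weak gradient-1},
\begin{equation*}
(\nabla_g Q_h\phi, \boldsymbol{\psi}_s)_T = (\nabla Q_0\phi, \boldsymbol{\psi}_s)_T + (\delta_g Q_h\phi, \boldsymbol{\psi}_s)_T.
\end{equation*}
Since $s\leq\ell$, the test function $\boldsymbol{\psi}_s$ is admissible in \eqref{weak gradient-2} with $v=Q_h\phi=\{Q_0\phi,Q_b\phi\}$, so the second term equals $\langle Q_b\phi - Q_bQ_0\phi,\,\boldsymbol{\psi}_s\cdot\mathbf{n}\rangle_{\partial T}$. For the first term I would integrate by parts, and I would do the same to rewrite $(\nabla\phi,\boldsymbol{\psi}_s)_T$ as $-(\phi,\nabla\cdot\boldsymbol{\psi}_s)_T + \langle\phi,\boldsymbol{\psi}_s\cdot\mathbf{n}\rangle_{\partial T}$.

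Subtracting, the two volume integration-by-parts terms combine into exactly the desired right-hand side contribution $(\phi - Q_0\phi,\nabla\cdot\boldsymbol{\psi}_s)_T$, and all that remains is the boundary expression
\begin{equation*}
\bigl\langle Q_0\phi + Q_b\phi - Q_bQ_0\phi - \phi,\ \boldsymbol{\psi}_s\cdot\mathbf{n}\bigr\rangle_{\partial T} = \bigl\langle Q_b(\phi-Q_0\phi) - (\phi-Q_0\phi),\ \boldsymbol{\psi}_s\cdot\mathbf{n}\bigr\rangle_{\partial T}.
\end{equation*}
Here is the single substantive step: on each flat face $e\subset\partial T$, the face is flat so $\mathbf{n}$ is constant on $e$ and $\boldsymbol{\psi}_s\cdot\mathbf{n}|_e\in P_s(e)$; since $s\leq j$, this lies in the range $P_j(e)$ of $Q_b$, so the self-adjointness of the $L^2(e)$-projection $Q_b$ gives $\langle \phi-Q_0\phi,\boldsymbol{\psi}_s\cdot\mathbf{n}\rangle_e=\langle Q_b(\phi-Q_0\phi),\boldsymbol{\psi}_s\cdot\mathbf{n}\rangle_e$. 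Summing over $e\subset\partial T$ makes the boundary term vanish, which finishes the proof.

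The main obstacle is really bookkeeping rather than a deep idea: one has to track where each of the inequalities $s\leq\ell$ and $s\leq j$ is invoked. The first is needed to justify using $\boldsymbol{\psi}_s$ as a test function in the definition \eqref{weak gradient-2} of $\delta_g$, while the second is what enables $Q_b$ to act as the identity on $\boldsymbol{\psi}_s\cdot\mathbf{n}$ edge by edge. Both are automatically supplied by the definition $s=\min\{j,\ell\}$, so there is no genuine difficulty beyond keeping the projections straight.
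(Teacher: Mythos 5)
Your proposal is correct and uses exactly the same ingredients as the paper's proof: the decomposition $\nabla_g Q_h\phi=\nabla Q_0\phi+\delta_g Q_h\phi$, testing \eqref{weak gradient-2} with $\boldsymbol{\psi}_s$ (valid since $s\le\ell$), the self-adjointness of $Q_b$ together with $\boldsymbol{\psi}_s\cdot\mathbf{n}|_e\in P_s(e)\subset P_j(e)$ on each flat face, and integration by parts. The only difference is cosmetic ordering --- the paper strips $Q_b$ from the boundary term before integrating by parts, whereas you integrate by parts first and cancel the boundary residue at the end --- so the two arguments are essentially identical.
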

\begin{proof}
Using the definition of generalized discrete weak gradient \eqref{weak gradient-1}-\eqref{weak gradient-2}, $s=\min\{j,\ell\}$ and the usual integration by parts gives
\begin{equation*}
\begin{split}
 &(\nabla_gQ_h\phi,\boldsymbol{\psi}_s)_T\\
=&(\nabla Q_0\phi+\delta_gQ_h\phi,\boldsymbol{\psi}_s)_T\\
=&(\nabla Q_0\phi,\boldsymbol{\psi}_s)_T+\langle Q_b\phi-Q_b(Q_0\phi),\boldsymbol{\psi}_s\cdot\textbf{n}\rangle_{\pa T}\\
=&(\nabla Q_0\phi,\boldsymbol{\psi}_s)_T+\langle\phi-Q_0\phi,\boldsymbol{\psi}_s\cdot\textbf{n}\rangle_{\pa T}\\
=&(\nabla Q_0\phi,\boldsymbol{\psi}_s)_T+(\nabla\phi,\boldsymbol{\psi}_s)_T+(\phi,\nabla\cdot\boldsymbol{\psi}_s)_T
  -(Q_0\phi,\nabla\cdot\boldsymbol{\psi}_s)_T-(\nabla Q_0\phi,\boldsymbol{\psi}_s)_T\\
=&(\nabla\phi,\boldsymbol{\psi}_s)_T+(\phi-Q_0\phi,\nabla\cdot\boldsymbol{\psi}_s)_T.
\end{split}
\end{equation*}
This completes the proof of the lemma.
\end{proof}

Let $u_h\in V_h$   be the numerical solution of the {\rm g}WG scheme \eqref{WG-scheme} and  $u$ be the exact solution of the model problem  \eqref{model-problem}. Denote by $e_h$ the error function given by
\begin{equation}\label{error-function}
\begin{split}
e_h=Q_hu-u_h=\{e_0,e_b\}=\{Q_0u-u_0,Q_bu-u_b\}.
\end{split}
\end{equation}

\begin{lemma} \label{error-equation}
Let $e_h$ be the error function defined in \eqref{error-function}. Then, the following error equation holds true
\begin{eqnarray}\label{Error-equation-22}
\sum_{T\in{\cal T}_h}({\color{black}{a}}\nabla_ge_h,\nabla_gv)_{{ T}}+s(e_h,v)=\zeta_u(v),~~~~~\forall v\in V_h^0,
\end{eqnarray}
where $\zeta_u(v)$ is given by
\begin{equation}\label{error equation-remainder}
\begin{split}
\zeta_u(v)
=&s(Q_hu,v)+\sum_{T\in{\cal T}_h}(u-Q_0u,\nabla\cdot({\color{black}{a}}\mathbb{Q}_s\nabla_gv))_T\\
  & +\sum_{T\in{\cal T}_h}((\mathbb{Q}_s-I)({\color{black}{a}}\nabla u),\nabla v_0)_T
 +\sum_{T\in{\cal T}_h}\langle(I-\mathbb{Q}_s)({\color{black}{a}}\nabla u)\cdot\textbf{n},v_0-v_b\rangle_{\pa T}\\
 & +\sum_{T\in{\cal T}_h}({\color{black}{a}}\nabla_gQ_hu,(I-\mathbb{Q}_s)\nabla_gv)_T.
\end{split}
\end{equation}
\end{lemma}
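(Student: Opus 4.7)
The plan is to start from the identity
\[
a(e_h,v)+s(e_h,v)=a(Q_hu,v)+s(Q_hu,v)-(f,v_0),\qquad v\in V_h^0,
\]
which follows by subtracting the {\rm g}WG scheme \eqref{WG-scheme} from the expansion of $a(Q_hu,v)+s(Q_hu,v)$. Thus all that must be checked is that the right-hand side above equals $\zeta_u(v)$. The $s(Q_hu,v)$-term is already present as the first piece of $\zeta_u$, so the real work is to massage $a(Q_hu,v)-(f,v_0)$ into the remaining four terms.

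First I would treat $(f,v_0)$. Since $u$ solves \eqref{model-problem}, integration by parts element-wise gives
\[
(f,v_0)=\sum_{T\in{\cal T}_h}({\color{black}{a}}\nabla u,\nabla v_0)_T-\sum_{T\in{\cal T}_h}\langle {\color{black}{a}}\nabla u\cdot\textbf{n},v_0-v_b\rangle_{\pa T},
\]
where the $v_b$ contribution is inserted using the continuity of ${\color{black}{a}}\nabla u\cdot\textbf{n}$ across interior faces together with $v_b=0$ on $\pa\O$ (so its net contribution is zero).

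Next I would handle $a(Q_hu,v)=\sum_T({\color{black}{a}}\nabla_gQ_hu,\nabla_gv)_T$ by the splitting ${\color{black}{a}}\nabla_gv=\mathbb{Q}_s({\color{black}{a}}\nabla_gv)+(I-\mathbb{Q}_s)({\color{black}{a}}\nabla_gv)$. The $(I-\mathbb{Q}_s)$ part, after moving ${\color{black}{a}}$ back (using symmetry and the fact that ${\color{black}{a}}$ is piecewise constant, so $\mathbb{Q}_s$ commutes with multiplication by ${\color{black}{a}}$ and $\mathbb{Q}_s({\color{black}{a}}\nabla_gv)={\color{black}{a}}\mathbb{Q}_s\nabla_gv$), produces exactly the last term of $\zeta_u(v)$. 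For the $\mathbb{Q}_s$ part I would apply Lemma \ref{pre-commu} with $\boldsymbol{\psi}_s=\mathbb{Q}_s({\color{black}{a}}\nabla_gv)\in[P_s(T)]^d$, yielding
\[
(\nabla_gQ_hu,\mathbb{Q}_s({\color{black}{a}}\nabla_gv))_T
=({\color{black}{a}}\nabla u,\mathbb{Q}_s\nabla_gv)_T+(u-Q_0u,\nabla\cdot({\color{black}{a}}\mathbb{Q}_s\nabla_gv))_T,
\]
the second piece of which is precisely the second term of $\zeta_u(v)$.

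It remains to reduce $({\color{black}{a}}\nabla u,\mathbb{Q}_s\nabla_gv)_T$ into the two boundary/volume remainders and the $({\color{black}{a}}\nabla u,\nabla v_0)_T$ that will cancel against the expression for $(f,v_0)$. I would write it as $(\mathbb{Q}_s({\color{black}{a}}\nabla u),\nabla v_0+\delta_gv)_T$, apply the definition \eqref{weak gradient-2} of $\delta_gv$ (legal since $\mathbb{Q}_s({\color{black}{a}}\nabla u)\in[P_\ell(T)]^d$ because $s\le\ell$), and then observe the key fact that $\mathbb{Q}_s({\color{black}{a}}\nabla u)\cdot\textbf{n}|_e\in P_s(e)\subseteq P_j(e)$ since $s\le j$; this allows $\langle Q_bv_0,\mathbb{Q}_s({\color{black}{a}}\nabla u)\cdot\textbf{n}\rangle_e$ to be rewritten as $\langle v_0,\mathbb{Q}_s({\color{black}{a}}\nabla u)\cdot\textbf{n}\rangle_e$ by the defining property of $Q_b$. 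Combining and cancelling $({\color{black}{a}}\nabla u,\nabla v_0)_T$ against the corresponding piece of $(f,v_0)$ produces the third and fourth terms of $\zeta_u(v)$. The main obstacle is the careful bookkeeping of $\mathbb{Q}_s$ versus $I$ and exploiting $s=\min\{j,\ell\}$ at exactly the right moments—namely, using $s\le\ell$ to place $\mathbb{Q}_s({\color{black}{a}}\nabla u)$ inside the test space of \eqref{weak gradient-2}, and using $s\le j$ to remove $Q_b$ on the boundary integrals—since without both inequalities the cancellations would not line up cleanly.
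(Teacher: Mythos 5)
Your proposal is correct and follows essentially the same route as the paper's proof: the same application of Lemma \ref{pre-commu} with $\boldsymbol{\psi}_s={\color{black}{a}}\mathbb{Q}_s\nabla_gv$, the same use of \eqref{weak gradient-2} with $\mathbb{Q}_s({\color{black}{a}}\nabla u)$ as test function (justified by $s\le\ell$), the same removal of $Q_b$ via $s\le j$, the same elementwise integration by parts with $\sum_T\langle {\color{black}{a}}\nabla u\cdot\textbf{n},v_b\rangle_{\pa T}=0$, and the same $\mathbb{Q}_s$ versus $I$ splitting producing the last term of $\zeta_u$. The only difference is organizational—you subtract the scheme at the outset to isolate $a(Q_hu,v)+s(Q_hu,v)-(f,v_0)$, whereas the paper substitutes the scheme mid-derivation—which is arguably a cleaner presentation of the identical argument.
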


\begin{proof}
From Lemma \ref{pre-commu} with $\boldsymbol{\psi}_s={\color{black}{a}}\mathbb{Q}_s\nabla_gv$ and $\phi=u$, one arrives at
\begin{eqnarray}\label{error-equation-1}
\begin{split}
(\nabla_gQ_hu,{\color{black}{a}}\mathbb{Q}_s\nabla_gv)_T
=(\nabla u,{\color{black}{a}}\mathbb{Q}_s\nabla_gv)_T+(u-Q_0u,\nabla\cdot({\color{black}{a}}\mathbb{Q}_s\nabla_gv))_T.
\end{split}
\end{eqnarray}

As to the first term on the right hand side of \eqref{error-equation-1}, using \eqref{weak gradient-1}-\eqref{weak gradient-2}, $s=\min\{j,\ell\}$, \eqref{WG-scheme}, and the usual integration by parts gives
\begin{equation}\label{error-equation-1-1}
\begin{split}
 &\sum_{T\in{\cal T}_h}(\nabla u,{\color{black}{a}}\mathbb{Q}_s\nabla_gv)_T\\
=&\sum_{T\in{\cal T}_h}(\mathbb{Q}_s({\color{black}{a}}\nabla u),\nabla_gv)_T\\
=&\sum_{T\in{\cal T}_h}(\mathbb{Q}_s({\color{black}{a}}\nabla u),\nabla v_0)_T
  +\langle v_b-Q_bv_0,\mathbb{Q}_s({\color{black}{a}}\nabla u)\cdot\textbf{n}\rangle_{\pa T}\\
=&\sum_{T\in{\cal T}_h}((\mathbb{Q}_s-I)({\color{black}{a}}\nabla u),\nabla v_0)_T-(\nabla\cdot({\color{black}{a}}\nabla u),v_0)_T
  +\langle {\color{black}{a}}\nabla u\cdot\textbf{n},v_0\rangle_{\pa T}\\
 &+\langle v_b-v_0,\mathbb{Q}_s({\color{black}{a}}\nabla u)\cdot\textbf{n}\rangle_{\pa T}\\
=&\sum_{T\in{\cal T}_h}((\mathbb{Q}_s-I)({\color{black}{a}}\nabla u),\nabla v_0)_T+(f,v_0)
  +\sum_{T\in{\cal T}_h}\langle(I-\mathbb{Q}_s)({\color{black}{a}}\nabla u)\cdot\textbf{n},v_0-v_b\rangle_{\pa T}\\
=&\sum_{T\in{\cal T}_h}((\mathbb{Q}_s-I)({\color{black}{a}}\nabla u),\nabla v_0)_T+({\color{black}{a}}\nabla_gu_h,\nabla_gv)_{T}+s(u_h,v)\\
 &+\sum_{T\in{\cal T}_h}\langle(I-\mathbb{Q}_s)({\color{black}{a}}\nabla u)\cdot\textbf{n},v_0-v_b\rangle_{\pa T}\\
=&\sum_{T\in{\cal T}_h}((\mathbb{Q}_s-I)({\color{black}{a}}\nabla u),\nabla v_0)_T-({\color{black}{a}}\nabla_ge_h,\nabla_gv)_{T}
  +({\color{black}{a}}\nabla_gQ_hu,\nabla_gv)_{T}\\
 &-s(e_h,v)+s(Q_hu,v)+\sum_{T\in{\cal T}_h}\langle(I-\mathbb{Q}_s)({\color{black}{a}}\nabla u)\cdot\textbf{n},v_0-v_b\rangle_{\pa T},
\end{split}
\end{equation}
where we have also used  the first equation in \eqref{model-problem}, the fact $\sum_{T\in{\cal T}_h}\langle {\color{black}{a}}\nabla u\cdot\textbf{n},v_b\rangle_{\pa T}=0$ since $v_b=0$ on $\partial\Omega$.

Finally, substituting \eqref{error-equation-1-1} into \eqref{error-equation-1} gives rise to \eqref{Error-equation-22}. This completes the proof of the lemma.
\end{proof}

\section{Technical Results}\label{Section:TRs}
  Some technical results will be discussed in this section.

Let ${\cal T}_h$ be a finite element partition of $\O$ that is shape regular as described in \cite{ellip_MC2014}. For any $T\in{\cal T}_h$ and $\phi\in H^1(T)$, the   trace inequality holds true \cite{ellip_MC2014}; i.e.,
\begin{equation}\label{trace-inequality}
\|\phi\|_{\pa T}^2\lesssim h_T^{-1}\|\phi\|_T^2+h_T\|\nabla\phi\|_T^2.
\end{equation}
If $\phi$ is a polynomial on $T\in{\cal T}_h$, from the inverse inequality, there holds \cite{ellip_MC2014}
\begin{equation}\label{inverse-inequality}
\|\phi\|_{\pa T}^2\lesssim h_T^{-1}\|\phi\|_T^2.
\end{equation}

\begin{lemma} \label{error estimate for projection}
Let ${\cal T}_h$ be a finite element partition of $\O$ that is shape regular as described in \cite{ellip_MC2014}. For any $\phi\in H^{k+1}(\O)$ and $\varphi\in H^{s+2}(\O)$, there holds
\begin{equation}\label{e1}
\begin{split}
&\sum_{T\in{\cal T}_h}\|\phi-Q_{0}\phi\|_T^2+\sum_{T\in{\cal T}_h}h_T^2\|\nabla(\phi-Q_{0}\phi)\|_T^2\lesssim h^{2(k+1)}\|\phi\|_{k+1}^2,
 \end{split}
\end{equation}
\begin{equation}\label{e2}
\begin{split}
\sum_{T\in{\cal T}_h}\|\nabla\varphi-\mathbb{Q}_s\nabla\varphi\|_T^2
 +\sum_{T\in{\cal T}_h}h_T^2\|\nabla(\nabla\varphi-\mathbb{Q}_s\nabla\varphi)\|_T^2
\lesssim h^{2(s+1)}\|\varphi\|_{s+2}^2.
 \end{split}
\end{equation}
\end{lemma}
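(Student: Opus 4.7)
The plan is to reduce both estimates to the classical Bramble--Hilbert/scaling arguments for $L^2$ projections onto polynomial spaces over shape-regular elements, applied element by element and then summed.

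First I would treat \eqref{e1}. Fix $T \in {\cal T}_h$. Since $Q_0$ is the $L^2$ projection onto $P_k(T)$, it preserves polynomials of degree $\le k$, so for any $q \in P_k(T)$ one has $\phi - Q_0 \phi = (\phi - q) - Q_0(\phi - q)$, and the $L^2$ orthogonality gives $\|\phi - Q_0\phi\|_T \le \|\phi - q\|_T$. Choosing $q$ to be the averaged Taylor polynomial of order $k+1$ on $T$ (or pulling back to a reference element $\hat T$, applying Bramble--Hilbert there, and then using the standard affine/shape-regular scaling) yields
\[
\|\phi - Q_0 \phi\|_T \lesssim h_T^{k+1} |\phi|_{k+1,T}.
\]
For the gradient term, I would use a local inverse-type bound to handle $\nabla Q_0 \phi$: writing $\nabla(\phi - Q_0\phi) = \nabla(\phi - q) + \nabla(q - Q_0\phi)$ and applying the inverse inequality to the polynomial piece $q - Q_0\phi \in P_k(T)$ gives
\[
\|\nabla(\phi - Q_0 \phi)\|_T \lesssim h_T^{k} |\phi|_{k+1,T}.
\]
Multiplying by $h_T^2$, summing over $T \in {\cal T}_h$, and using $h_T \le h$ together with the additivity of $|\phi|_{k+1,T}^2$ gives \eqref{e1}.

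For \eqref{e2} I would apply essentially the same machinery but to the vector field $\nabla\varphi$, which belongs to $[H^{s+1}(T)]^d$ when $\varphi \in H^{s+2}(T)$. Since $\mathbb{Q}_s$ is the $L^2$ projection onto $[P_s(T)]^d$, exactly the argument above yields
\[
\|\nabla\varphi - \mathbb{Q}_s\nabla\varphi\|_T \lesssim h_T^{s+1}|\nabla\varphi|_{s+1,T} \lesssim h_T^{s+1}|\varphi|_{s+2,T},
\]
and combining with the inverse inequality applied to the polynomial component of $\nabla\varphi - \mathbb{Q}_s\nabla\varphi$ gives the companion bound
\[
\|\nabla(\nabla\varphi - \mathbb{Q}_s\nabla\varphi)\|_T \lesssim h_T^{s}|\varphi|_{s+2,T}.
\]
Summing over $T$ as before produces \eqref{e2}.

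There is no real obstacle here beyond bookkeeping: the lemma is a standard consequence of Bramble--Hilbert plus inverse estimates under the shape-regularity assumption on ${\cal T}_h$ (as recorded for this setting in \cite{ellip_MC2014}, and also underlying the trace and inverse inequalities \eqref{trace-inequality}--\eqref{inverse-inequality} used elsewhere in the paper). The only mild subtlety worth flagging is that on a general polytopal partition the scaling argument is usually carried out by covering each $T$ with a shape-regular simplex (or using a star-shaped reference patch), which is why the shape-regularity assumption of \cite{ellip_MC2014} is invoked explicitly; given that assumption, all constants are absorbed into ``$\lesssim$''.
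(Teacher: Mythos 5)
Your proof is correct: the paper itself states this lemma without proof, deferring to the standard projection estimates for shape-regular polytopal partitions established in \cite{ellip_MC2014}, and your Bramble--Hilbert-plus-scaling argument for \eqref{e1}, combined with the inverse inequality on the polynomial part to control the gradient term, is exactly the standard derivation being invoked. The application of the same machinery to $\nabla\varphi\in[H^{s+1}(T)]^d$ for \eqref{e2}, and your remark that shape regularity is what licenses the scaling and inverse estimates on general polytopal elements, are both accurate.
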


\begin{lemma} \label{gradient property}
For any $\phi\in H^{k+1}(T)$, there holds
$$
\|\delta_gQ_h\phi\|_T\lesssim h_T^k\|\phi\|_{k+1,T}.
$$
\end{lemma}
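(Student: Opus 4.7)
The plan is to bound $\|\delta_g Q_h\phi\|_T$ by testing the defining equation \eqref{weak gradient-2} against $\boldsymbol{\psi} = \delta_g Q_h\phi$ itself, then converting the resulting boundary term on $\partial T$ back to an $L^2$-type estimate using the trace inequality, the inverse inequality, and the projection error bound \eqref{e1}.

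First I would substitute $\boldsymbol{\psi} = \delta_g Q_h\phi$ into \eqref{weak gradient-2}, noting that $Q_h\phi = \{Q_0\phi, Q_b\phi\}$ so that the right-hand side becomes $\langle Q_b\phi - Q_b(Q_0\phi), (\delta_g Q_h\phi)\cdot\mathbf{n}\rangle_{\partial T} = \langle Q_b(\phi - Q_0\phi), (\delta_g Q_h\phi)\cdot\mathbf{n}\rangle_{\partial T}$. Applying Cauchy--Schwarz on $\partial T$ and using the $L^2$-stability of $Q_b$ gives
\begin{equation*}
\|\delta_g Q_h\phi\|_T^2 \;\lesssim\; \|\phi - Q_0\phi\|_{\partial T}\,\|(\delta_g Q_h\phi)\cdot\mathbf{n}\|_{\partial T}.
\end{equation*}

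Next I would estimate each boundary norm. For the first factor, the trace inequality \eqref{trace-inequality} followed by the projection estimate \eqref{e1} yields
\begin{equation*}
\|\phi - Q_0\phi\|_{\partial T}^2 \;\lesssim\; h_T^{-1}\|\phi - Q_0\phi\|_T^2 + h_T\|\nabla(\phi - Q_0\phi)\|_T^2 \;\lesssim\; h_T^{2k+1}\|\phi\|_{k+1,T}^2.
\end{equation*}
For the second factor, since $\delta_g Q_h\phi \in [P_\ell(T)]^d$ is a polynomial, the inverse trace inequality \eqref{inverse-inequality} gives $\|(\delta_g Q_h\phi)\cdot\mathbf{n}\|_{\partial T}^2 \lesssim h_T^{-1}\|\delta_g Q_h\phi\|_T^2$. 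Combining these two bounds and dividing by $\|\delta_g Q_h\phi\|_T$ yields the desired estimate $\|\delta_g Q_h\phi\|_T \lesssim h_T^k\|\phi\|_{k+1,T}$.

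There is no real obstacle here; the only subtle point is making sure that the $Q_b$ on the right-hand side of \eqref{weak gradient-2} does not cause trouble when $(\delta_g Q_h\phi)\cdot\mathbf{n}$ has degree exceeding $j$ on a face. This is handled cleanly by keeping $Q_b(\phi - Q_0\phi)$ together and using $L^2$-stability of the projection, so that one never needs to replace $(\delta_g Q_h\phi)\cdot\mathbf{n}$ by its projection. The rest is standard scaling.
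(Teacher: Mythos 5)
Your argument is correct and is essentially the paper's proof: taking $\boldsymbol{\psi}=\delta_gQ_h\phi$ in \eqref{weak gradient-2} is just the maximizer in the paper's dual characterization $\|\delta_gQ_h\phi\|_T=\sup_{\boldsymbol{\psi}\in[P_\ell(T)]^d}(\delta_gQ_h\phi,\boldsymbol{\psi})_T/\|\boldsymbol{\psi}\|_T$, and both proofs then proceed identically via Cauchy--Schwarz, the $L^2$-stability of $Q_b$, the trace inequality \eqref{trace-inequality} for $\phi-Q_0\phi$, the inverse trace inequality \eqref{inverse-inequality} for the polynomial factor, and the projection estimate \eqref{e1}. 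Your closing remark about keeping $Q_b(\phi-Q_0\phi)$ intact rather than projecting $\boldsymbol{\psi}\cdot\mathbf{n}$ is exactly how the paper handles it as well.
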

\begin{proof}
It follows from \eqref{weak gradient-2}, the Cauchy-Schwarz inequality, the trace inequality \eqref{inverse-inequality} and \eqref{e1} that
\begin{equation*}
\begin{split}
\|\delta_gQ_h\phi\|_T
=&\sup_{\boldsymbol{\psi}\in[P_\ell(T)]^d}\frac{(\delta_gQ_h\phi,\boldsymbol{\psi})_T}{\|\boldsymbol{\psi}\|_T}\\
=&\sup_{\boldsymbol{\psi}\in[P_\ell(T)]^d}\frac{\langle Q_b\phi-Q_b(Q_0\phi),\boldsymbol{\psi}\cdot\textbf{n}\rangle_{\pa T}}
  {\|\boldsymbol{\psi}\|_T}\\
\lesssim&\sup_{\boldsymbol{\psi}\in[P_\ell(T)]^d}\frac{\|\phi-Q_0\phi\|_{\pa T}\|\boldsymbol{\psi}\|_{\pa T}}
  {\|\boldsymbol{\psi}\|_T}\\
\lesssim& \sup_{\boldsymbol{\psi}\in[P_\ell(T)]^d}\frac{h_{T}^{-1}\|\phi-Q_0\phi\|_{T}\|\boldsymbol{\psi}\|_{T}}
  {\|\boldsymbol{\psi}\|_T}\\
\lesssim & h_T^k\|\phi\|_{k+1,T}.
\end{split}
\end{equation*}
This completes the proof of the lemma.
\end{proof}

For any $v\in V_h$, the {\rm g}WG scheme \eqref{WG-scheme} induces a seminorm given by
\begin{equation}\label{three-bar}
\3barv\3bar^2=\sum_{T\in {\cal T}_h}({\color{black}{a}}\nabla_gv,\nabla_gv)_{T}+s(v,v).
\end{equation}
It is easy to verify $\3bar\cdot\3bar$ is a norm in $V_h^0$. 
\begin{lemma}\label{gradient-tribar}
For any $v\in V_h$, there holds 
$$
\Big(\sum_{T\in{\cal T}_h}\|\nabla v_0\|_T^2\Big)^{\frac{1}{2}}\lesssim(1+h^{\frac{-\gamma-1}{2}})\3barv\3bar.
$$
\end{lemma}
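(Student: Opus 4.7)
The plan is to exploit the decomposition $\nabla v_0=\nabla_g v-\delta_g v$ coming from the definition \eqref{weak gradient-1}, so that by the triangle inequality
$$
\|\nabla v_0\|_T\leq\|\nabla_g v\|_T+\|\delta_g v\|_T,
$$
and then bound each piece separately by $\3bar v\3bar$. The $\nabla_g v$ contribution is immediate: since ${\color{black}{a}}$ is uniformly positive definite, $\|\nabla_g v\|_T^2\lesssim({\color{black}{a}}\nabla_g v,\nabla_g v)_T$, and summing over $T$ produces a bound by $\3bar v\3bar^2$ via \eqref{three-bar}. So the real task is to control $\delta_g v$ in terms of the stabilizer.

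For the $\delta_g v$ contribution, I would imitate the duality argument in Lemma \ref{gradient property}. Taking the supremum over $\boldsymbol{\psi}\in[P_\ell(T)]^d$ in \eqref{weak gradient-2}, using Cauchy-Schwarz on $\pa T$ and the inverse trace inequality \eqref{inverse-inequality}, one obtains
$$
\|\delta_g v\|_T\lesssim h_T^{-1/2}\|v_b-Q_b v_0\|_{\pa T}.
$$
Next, the defining form of the stabilizer gives $\rho h_T^\gamma\|Q_b v_0-v_b\|_{\pa T}^2\leq s(v,v)$ for each $T$, so
$$
\|\delta_g v\|_T^2\lesssim h_T^{-\gamma-1}\,\rho^{-1}\,\|\rho^{1/2}h_T^{\gamma/2}(Q_b v_0-v_b)\|_{\pa T}^2.
$$
Summing over $T$ and pulling out $h_T^{-\gamma-1}\leq C h^{-\gamma-1}$ (which is where shape regularity/quasi-uniformity of ${\cal T}_h$ enters) yields $\sum_T\|\delta_g v\|_T^2\lesssim h^{-\gamma-1}s(v,v)\leq h^{-\gamma-1}\3bar v\3bar^2$.

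Combining the two bounds and taking square roots gives the desired inequality
$$
\Big(\sum_{T\in{\cal T}_h}\|\nabla v_0\|_T^2\Big)^{1/2}\lesssim\3bar v\3bar+h^{(-\gamma-1)/2}\3bar v\3bar=(1+h^{(-\gamma-1)/2})\3bar v\3bar.
$$

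The only subtle point, and the one I would flag as the main obstacle, is the passage from $h_T^{-\gamma-1}$ to the uniform factor $h^{-\gamma-1}$ outside the sum: when $\gamma>-1$ the exponent $-\gamma-1$ is negative, so $h_T^{-\gamma-1}$ is maximized on the \emph{smallest} elements, and one genuinely needs quasi-uniformity (or at least the $h_T\sim h$ that is implicit in the shape-regularity hypothesis cited from \cite{ellip_MC2014}) to avoid a worse mesh-dependent constant. Once that is granted, the rest of the argument is just the triangle inequality together with the duality estimate on $\delta_g v$ already developed in Lemma \ref{gradient property}.
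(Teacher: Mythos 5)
Your proof is correct and follows essentially the same route as the paper: the decomposition $\nabla v_0=\nabla_g v-\delta_g v$, the triangle inequality, the duality/inverse-trace bound $\|\delta_g v\|_T\lesssim h_T^{-1/2}\|v_b-Q_bv_0\|_{\pa T}$, and absorption into the stabilizer. Your remark about needing quasi-uniformity to replace $h_T^{-\gamma-1}$ by $h^{-\gamma-1}$ when $\gamma>-1$ is a valid observation that the paper glosses over, but it does not change the argument.
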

\begin{proof}
From the generalized discrete weak gradient \eqref{weak gradient-1} and \eqref{three-bar}, one arrives at
\begin{equation}\label{gradient-tribar-19:36}
\begin{split}
\Big(\sum_{T\in{\cal T}_h}\|\nabla v_0\|_T^2\Big)^{\frac{1}{2}}
=&\Big(\sum_{T\in{\cal T}_h}\|\nabla_gv-\delta_gv\|_T^2\Big)^{\frac{1}{2}}\\
\lesssim&\3barv\3bar+\Big(\sum_{T\in{\cal T}_h}\|\delta_gv\|_T^2\Big)^{\frac{1}{2}}.
\end{split}
\end{equation}
We use \eqref{weak gradient-2}, the Cauchy-Schwarz inequality and the trace inequality \eqref{inverse-inequality} to obtain
\begin{equation*} 
\begin{split}
\|\delta_gv\|_T
=&\sup_{\boldsymbol{\psi}\in[P_\ell(T)]^d}\frac{(\delta_gv,\boldsymbol{\psi})_T}{\|\boldsymbol{\psi}\|_T}\\
=&\sup_{\boldsymbol{\psi}\in[P_\ell(T)]^d}\frac{\langle v_b-Q_bv_0,\boldsymbol{\psi}\cdot\textbf{n}\rangle_{\pa T}}{\|\boldsymbol{\psi}\|_T}\\
\lesssim&\sup_{\boldsymbol{\psi}\in[P_\ell(T)]^d}\frac{\|v_b-Q_bv_0\|_{\pa T}\|\boldsymbol{\psi}\|_{\pa T}}{\|\boldsymbol{\psi}\|_T}\\
\lesssim&h_T^{ -\frac{1}{2} }  \|v_b-Q_bv_0\|_{\pa T}.
  \end{split}
\end{equation*}
This gives 
\begin{equation}\label{gradient-tribar-19:37}
    \Big(\sum_{T\in{\cal T}_h}\|\delta_gv\|_T^2\Big)^{\frac{1}{2}} \lesssim h^{\frac{-\gamma-1}{2}}\3bar v\3bar.
\end{equation}
Substituting \eqref{gradient-tribar-19:37}  into \eqref{gradient-tribar-19:36}   completes the proof of the lemma.
\end{proof}

\begin{lemma}\label{energy-estimate-pre}
Recall that $s=\min\{j,\ell\}$. For any $\varphi\in H^{k+1}(\O)\cap H^{s+2}(\O)$ and $v\in V_{h}$, there holds
\begin{equation}\label{technique-estimate-1}
|s(Q_{h}\varphi,v)|\lesssim h^{\frac{2k+1+\gamma}{2}}\|\varphi\|_{k+1}\3barv\3bar,
\end{equation}
\begin{equation}\label{technique-estimate-2}
\begin{split}
&|\sum_{T\in{\cal T}_h}(\varphi-Q_0\varphi,\nabla\cdot({\color{black}{a}}\mathbb{Q}_s\nabla_gv))_T|\\
\lesssim&\begin{cases} 0,  \qquad\mbox{if}~s\leq1,~ k\geq0,~or~s>1,~k\geq s-1,\\
h^k\|\varphi\|_{k+1}\3barv\3bar, \quad\mbox{if}~s>1,~ k<s-1,\end{cases}
\end{split}
\end{equation}

 \begin{equation}\label{technique-estimate-3}
\begin{split}
&|\sum_{T\in{\cal T}_h}((\mathbb{Q}_s-I)({\color{black}{a}}\nabla\varphi),\nabla v_0)_T|\\
\lesssim&\begin{cases} 0,  \qquad\qquad\mbox{if}~k\leq1,~or~s\geq k-1,\\
(h^{\frac{2s+1-\gamma}{2}}+h^{s+1})\|\varphi\|_{s+2}\3barv\3bar, \quad\mbox{otherwise},\end{cases}
\end{split}
\end{equation}

\begin{equation}\label{technique-estimate-4}
\begin{split}
&|\sum_{T\in{\cal T}_h}\langle(I-\mathbb{Q}_s)({\color{black}{a}}\nabla\varphi)\cdot\textbf{n},v_0-v_b\rangle_{\pa T}|
\\
\lesssim&\begin{cases} h^{\frac{2s+1-\gamma}{2}}\|\varphi\|_{s+2}\3barv\3bar, \qquad\qquad\qquad\qquad \mbox{if}~ k=0,\\
(h^{\frac{2s+1-\gamma}{2}}+h^{s+1})\|\varphi\|_{s+2}\3barv\3bar, \qquad\qquad\mbox{otherwise},\end{cases}
\end{split}
\end{equation}


\begin{equation}\label{technique-estimate-5}
\begin{split}
&|\sum_{T\in{\cal T}_h}({\color{black}{a}}\nabla_gQ_h\varphi,(I-\mathbb{Q}_s)\nabla_gv)_T|\\
\lesssim& \begin{cases}
0, \qquad\qquad\qquad\qquad\qquad\qquad\qquad\qquad~~~ \mbox{if}~ s\geq\max\{k-1,\ell\},\\
\|\varphi\|_{1}\3barv\3bar,\qquad\qquad\qquad\qquad~~~ \mbox{if}~k=0,~ s<\max\{k-1,\ell\},\\
(h^k\|\varphi\|_{k+1}+h^{s+1}\|\varphi\|_{s+2})\3barv\3bar, ~~~\qquad\qquad\mbox{otherwise}.\end{cases} 
\end{split}
\end{equation}

\end{lemma}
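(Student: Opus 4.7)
The plan is to bound each of the five pieces of $\zeta_u(v)$ by Cauchy--Schwarz together with the $L^2$-projection error bounds of Lemma \ref{error estimate for projection}, the trace inequality \eqref{trace-inequality}, the inverse inequality \eqref{inverse-inequality}, Lemma \ref{gradient property} on $\|\delta_g Q_h\varphi\|_T$, and Lemma \ref{gradient-tribar} on $\sum\|\nabla v_0\|_T^2$. The case splits all arise by detecting when the integrand of one factor already lies in the range of a projection $Q_0$, $Q_b$, or $\mathbb{Q}_s$, whereupon orthogonality forces the term to vanish. In the non-vanishing cases a weighted Cauchy--Schwarz, choosing the weights $h_T^{\pm\gamma/2}$ on boundary terms so that one factor is absorbed into $s(v,v)^{1/2}\leq\3bar v\3bar$, produces the quantitative bound.

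\textbf{Estimates \eqref{technique-estimate-1} and \eqref{technique-estimate-2}.} For \eqref{technique-estimate-1}, rewrite $Q_bQ_0\varphi-Q_b\varphi=Q_b(Q_0\varphi-\varphi)$, bound it by $\|\varphi-Q_0\varphi\|_{\pa T}$, apply a weighted Cauchy--Schwarz with weights $h_T^{\gamma/2}/h_T^{\gamma/2}$, and use \eqref{trace-inequality}+\eqref{e1} giving $\|\varphi-Q_0\varphi\|_{\pa T}^2\lesssim h_T^{2k+1}\|\varphi\|_{k+1,T}^2$. For \eqref{technique-estimate-2}, note that since $a$ is piecewise constant, $\nabla\cdot(a\mathbb{Q}_s\nabla_g v)\in P_{s-1}(T)$ (interpreted as the zero polynomial for $s\le 1$); hence if $s\le 1$ or $k\ge s-1$ the integral vanishes by $Q_0$-orthogonality. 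In the remaining case $s>1$, $k<s-1$, combine Cauchy--Schwarz with \eqref{e1}, the inverse inequality \eqref{inverse-inequality} applied to $\nabla\cdot(a\mathbb{Q}_s\nabla_g v)$, and $\|\mathbb{Q}_s\nabla_g v\|_T\le\|\nabla_g v\|_T\lesssim\3bar v\3bar$.

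\textbf{Estimates \eqref{technique-estimate-3} and \eqref{technique-estimate-4}.} For \eqref{technique-estimate-3}, observe $\nabla v_0\in[P_{k-1}(T)]^d$, so when $k\le 1$ or $s\ge k-1$ the $\mathbb{Q}_s$-orthogonality kills the term; otherwise Cauchy--Schwarz with \eqref{e2} and Lemma \ref{gradient-tribar} yields $h^{s+1}(1+h^{(-\gamma-1)/2})\|\varphi\|_{s+2}\3bar v\3bar$, which collapses to the stated $(h^{s+1}+h^{(2s+1-\gamma)/2})\|\varphi\|_{s+2}\3bar v\3bar$. For \eqref{technique-estimate-4}, split $v_0-v_b=(v_0-Q_bv_0)+(Q_bv_0-v_b)$. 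On the second piece, a weighted Cauchy--Schwarz with weights $h_T^{\pm\gamma/2}$ together with \eqref{trace-inequality}+\eqref{e2} produces the $h^{(2s+1-\gamma)/2}\|\varphi\|_{s+2}\3bar v\3bar$ term. On the first piece, when $k=0$ it vanishes because $v_0|_e$ is a constant and $P_j(e)\ni$ constants; when $k\ge 1$, use $Q_b$-self-adjointness to replace the test factor by $(I-Q_b)[(I-\mathbb{Q}_s)(a\nabla\varphi)\cdot\mathbf{n}]$, then Cauchy--Schwarz combined with the trace/approximation bound for the gradient factor and Lemma \ref{gradient-tribar} on $\|\nabla v_0\|_T$ gives the extra $h^{s+1}\|\varphi\|_{s+2}\3bar v\3bar$ contribution.

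\textbf{Estimate \eqref{technique-estimate-5} and the main obstacle.} The key trick is that $\mathbb{Q}_s$ is self-adjoint and $a$ is piecewise constant, so
\begin{equation*}
(a\nabla_gQ_h\varphi,(I-\mathbb{Q}_s)\nabla_gv)_T=((I-\mathbb{Q}_s)(a\nabla_gQ_h\varphi),(I-\mathbb{Q}_s)\nabla_gv)_T.
\end{equation*}
Since $\nabla_gQ_h\varphi=\nabla Q_0\varphi+\delta_gQ_h\varphi\in[P_{\max(k-1,\ell)}(T)]^d$, when $s\ge\max\{k-1,\ell\}$ the whole expression is zero by $\mathbb{Q}_s$-orthogonality. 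When $k=0$ and $s<\max\{k-1,\ell\}=\ell$, then $\nabla Q_0\varphi=0$ and only $\delta_gQ_h\varphi$ survives, and Lemma \ref{gradient property} with $k=0$ gives $\|\delta_gQ_h\varphi\|_T\lesssim\|\varphi\|_{1,T}$, yielding the $\|\varphi\|_1\3bar v\3bar$ bound. In the remaining case, bound $(I-\mathbb{Q}_s)(a\nabla Q_0\varphi)$ by the triangle inequality $\|a\nabla Q_0\varphi-a\mathbb{Q}_s\nabla\varphi\|_T\le\|a\nabla(Q_0\varphi-\varphi)\|_T+\|a(I-\mathbb{Q}_s)\nabla\varphi\|_T$ controlled by \eqref{e1} and \eqref{e2}, and $(I-\mathbb{Q}_s)(a\delta_gQ_h\varphi)$ by $\|\delta_gQ_h\varphi\|_T\lesssim h^k\|\varphi\|_{k+1,T}$ from Lemma \ref{gradient property}, producing $(h^k\|\varphi\|_{k+1}+h^{s+1}\|\varphi\|_{s+2})\3bar v\3bar$. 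The main obstacle throughout is the careful bookkeeping of which polynomial space contains each factor so that the vanishing cases and the worst-case combinations of exponents line up exactly with the statement.
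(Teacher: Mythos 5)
Your proposal is correct, and for the first four estimates it follows the paper's proof essentially step for step: the same weighted Cauchy--Schwarz with $h_T^{\pm\gamma/2}$ against the stabilizer for \eqref{technique-estimate-1} and for the $k=0$ case of \eqref{technique-estimate-4}, the same degree-counting arguments ($\nabla\cdot(a\mathbb{Q}_s\nabla_g v)\in P_{s-1}(T)$, $\nabla v_0\in[P_{k-1}(T)]^d$) for the vanishing cases of \eqref{technique-estimate-2} and \eqref{technique-estimate-3}, and the same combination of \eqref{e1}--\eqref{e2}, \eqref{trace-inequality}--\eqref{inverse-inequality} and Lemma \ref{gradient-tribar} in the remaining cases; your shuffling of $(I-Q_b)$ onto the gradient factor in \eqref{technique-estimate-4} is only a cosmetic variant of the paper's direct bound $\|v_0-Q_bv_0\|_{\pa T}\lesssim h_T|v_0|_{1,\pa T}$. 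The one genuine divergence is \eqref{technique-estimate-5}: the paper splits $(a\nabla_gQ_h\varphi,(I-\mathbb{Q}_s)\nabla_gv)_T$ by inserting Lemma \ref{pre-commu}, which produces a four-term decomposition including $(\varphi-Q_0\varphi,\nabla\cdot(a\mathbb{Q}_s\nabla_gv))_T$ and therefore needs an extra inverse inequality to trade the divergence for $h_T^{-1}\|\nabla_gv\|_T$; you instead use the self-adjointness and idempotency of $\mathbb{Q}_s$ to write the term as $((I-\mathbb{Q}_s)(a\nabla_gQ_h\varphi),(I-\mathbb{Q}_s)\nabla_gv)_T$ and then control $(I-\mathbb{Q}_s)(a\nabla Q_0\varphi)$ by its best-approximation property against the competitor $\mathbb{Q}_s(a\nabla\varphi)$, with Lemma \ref{gradient property} handling $\delta_gQ_h\varphi$. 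Your route is shorter, avoids both Lemma \ref{pre-commu} and the inverse inequality at this point, and yields the same exponents in all three cases, so both arguments are valid; the paper's version has the side benefit of reusing machinery already set up for the error equation. (One immaterial slip: for $s=1$ the divergence $\nabla\cdot(a\mathbb{Q}_s\nabla_gv)$ is a constant rather than zero, but your conclusion still holds since $P_0(T)\subset P_k(T)$ for all $k\geq 0$.)
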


\begin{proof}
As to the first inequality \eqref{technique-estimate-1}, by using the Cauchy-Schwarz inequality, \eqref{trace-inequality} and Lemma \ref{error estimate for projection}, there holds
\begin{equation*}
\begin{split}
|s(Q_{h}\varphi,v)|
=&|\sum_{T\in {\cal T}_h} \rho h_T^{\gamma}\langle Q_b(Q_0\varphi)-Q_b\varphi,Q_bv_0-v_b\rangle_{\pa T}|\\
             \lesssim&\Big(\sum_{T\in {\cal T}_h}\rho h_T^{\gamma}\|Q_0\varphi-\varphi\|_{\pa T}^2\Big)^{\frac{1}{2}}\Big(\sum_{T\in {\cal T}_h} \rho h_T^{\gamma}\|Q_bv_0-v_b\|_{\pa T}^2\Big)^{\frac{1}{2}}\\
             \lesssim&\Big(\sum_{T\in {\cal T}_h}h_T^{\gamma}(h_T^{-1}\|Q_0\varphi-\varphi\|_{T}^2+h_T\|\nabla(Q_0\varphi-\varphi)\|_{T}^2)\Big)^{\frac{1}{2}}\3barv\3bar\\
             \lesssim&h^{\frac{2k+1+\gamma}{2}}\|\varphi\|_{k+1}\3barv\3bar.
\end{split}
\end{equation*}

 To derive \eqref{technique-estimate-2},  for the case of $s\leq1$ as well as the case of $s>1$ and $k\geq s-1$, we have from the definition of $L^2$ projection operator $Q_0$ that
$$|\sum_{T\in {\cal T}_h}(\varphi-Q_0\varphi,\nabla\cdot({\color{black}{a}}\mathbb{Q}_{s}\nabla_gv))_T|=0.$$
Otherwise, the case of $s>1$ and $k<s-1$, we use the Cauchy-Schwarz inequality, the inverse inequality, Lemma \ref{error estimate for projection} to obtain
\begin{equation*}
\begin{split}
&|\sum_{T\in {\cal T}_h}(\varphi-Q_0\varphi,\nabla\cdot({\color{black}{a}}\mathbb{Q}_{s}\nabla_gv))_T|\\
\lesssim&\Big(\sum_{T\in {\cal T}_h}\|Q_0\varphi-\varphi\|_{T}^2\Big)^{\frac{1}{2}}\Big(\sum_{T\in {\cal
           T}_h}\|\nabla\cdot({\color{black}{a}}\mathbb{Q}_{s}\nabla_gv)\|_{T}^2\Big)^{\frac{1}{2}}\\
\lesssim& h^{k+1}\|\varphi\|_{k+1}\Big(\sum_{T\in {\cal T}_h}h_T^{-2}\|\nabla_gv\|_T^2\Big)^{\frac{1}{2}}\\
\lesssim& h^k\|\varphi\|_{k+1}\3barv\3bar.
\end{split}
\end{equation*}

To analyze the inequality \eqref{technique-estimate-3}, for the case of $k\leq1$ and the case of $s\geq k-1$, there holds
$$|\sum_{T\in{\cal T}_h}((\mathbb{Q}_s-I)({\color{black}{a}}\nabla\varphi),\nabla v_0)_T|=0.$$
For the case of $k>1$ and the case of $s<k-1$, using the Cauchy-Schwarz inequality, \eqref{e2}, and Lemma \ref{gradient-tribar} gives
\begin{equation*}
\begin{split}
|\sum_{T\in{\cal T}_h}((\mathbb{Q}_s-I)({\color{black}{a}}\nabla\varphi),\nabla v_0)_T|
\lesssim&\Big(\sum_{T\in{\cal T}_h}\|(\mathbb{Q}_s-I)({\color{black}{a}}\nabla\varphi)\|_{T}^2\Big)^{\frac{1}{2}}
        \Big(\sum_{T\in{\cal T}_h}\|\nabla v_0\|_{T}^2\Big)^{\frac{1}{2}}\\
\lesssim&h^{s+1}(1+h^{\frac{-\gamma-1}{2}})\|\varphi\|_{s+2}\3barv\3bar.
\end{split}
\end{equation*}

As to  \eqref{technique-estimate-4}, for the case of $k=0$, note that $v_0=Q_bv_0$, using the triangle inequality \eqref{trace-inequality}, and the Cauchy-Schwarz inequality gives
\begin{equation*}
\begin{split}
 &|\sum_{T\in{\cal T}_h}\langle(I-\mathbb{Q}_s)({\color{black}{a}}\nabla\varphi)\cdot\textbf{n},v_0-v_b\rangle_{\pa T}|\\
\lesssim&|\sum_{T\in {\cal T}_h}\langle(I-\mathbb{Q}_{s}){\color{black}{a}}\nabla \varphi\cdot\textbf{n},Q_bv_0-v_b\rangle_{\pa T}| \\
\lesssim&\Big(\sum_{T\in {\cal T}_h}\rho^{-1}h_T^{-\gamma}\|(I-\mathbb{Q}_{s}){\color{black}{a}}\nabla \varphi\|_{\pa T}^2\Big)^{\frac{1}{2}}\Big(\sum_{T\in {\cal T}_h} \rho
     h_T^{\gamma}\|Q_bv_0-v_b\|_{\pa T}^2\Big)^{\frac{1}{2}}\\
\lesssim&\Big(\sum_{T\in {\cal T}_h}\rho^{-1}h_T^{-\gamma}(h_T^{-1}\|(I-\mathbb{Q}_{s}){\color{black}{a}}\nabla \varphi\|_{T}^2+h_T\|\nabla((I-\mathbb{Q}_{s}){\color{black}{a}}\nabla
     \varphi)\|_{T}^2)\Big)^{\frac{1}{2}}\3barv\3bar\\
\lesssim&h^{\frac{2s+1-\gamma}{2}}\|\varphi\|_{s+2}\3barv\3bar.
\end{split}
\end{equation*}
For the case of $k>0$, we use the Cauchy-Schwarz inequality, the trace inequalities \eqref{trace-inequality}-\eqref{inverse-inequality}, \eqref{e2},  and Lemma \ref{gradient-tribar} to obtain
\begin{equation*}
\begin{split}
 &|\sum_{T\in{\cal T}_h}\langle(I-\mathbb{Q}_s)({\color{black}{a}}\nabla\varphi)\cdot\textbf{n},v_0-v_b\rangle_{\pa T}|\\
\lesssim&\Big(\sum_{T\in{\cal T}_h}\|(I-\mathbb{Q}_s)({\color{black}{a}}\nabla\varphi)\|_{\pa T}^2\Big)^{\frac{1}{2}}
     \Big(\sum_{T\in{\cal T}_h}\|v_0-v_b\|_{\pa T}^2\Big)^{\frac{1}{2}}\\
\lesssim&\Big(\sum_{T\in{\cal T}_h}h_T^{-1}\|(I-\mathbb{Q}_s){\color{black}{a}}\nabla\varphi\|_{T}^2
    +\sum_{T\in{\cal T}_h}h_T\|\nabla((I-\mathbb{Q}_s){\color{black}{a}}\nabla\varphi)\|_{T}^2\Big)^{\frac{1}{2}}\\
     &\cdot\Big(\sum_{T\in{\cal T}_h}\|v_0-Q_bv_0\|_{\pa T}^2+\sum_{T\in{\cal T}_h}\|Q_bv_0-v_b\|_{\pa T}^2\Big)^{\frac{1}{2}}\\
\lesssim&\Big(\sum_{T\in{\cal T}_h}h_T^{2s+2-1}\|\varphi\|_{s+2}^2\Big)^{\frac{1}{2}}
     \Big(\sum_{T\in{\cal T}_h}h_T^{2}|v_0|_{1,\pa T}^2+h^{-\gamma}\3barv\3bar^2\Big)^{\frac{1}{2}}\\
\lesssim&h^{\frac{2s+1}{2}}\|\varphi\|_{s+2}\Big(\sum_{T\in{\cal T}_h}h_T^2h_T^{-1}|\nabla v_0|_{T}^2+h^{-\gamma}\3barv\3bar^2\Big)^{\frac{1}{2}}\\
\lesssim&(h^{\frac{2s+1-\gamma}{2}}+h^{s+1})\|\varphi\|_{s+2}\3barv\3bar.
\end{split}
\end{equation*}

As to \eqref{technique-estimate-5}, for the case of $s\geq\max\{k-1,\ell\}$, it follows from the definition of the generalized discrete weak gradient \eqref{weak gradient-1}-\eqref{weak gradient-2}, and  the definition of $\mathbb{Q}_s$ that 
$$|\sum_{T\in{\cal T}_h}({\color{black}{a}}\nabla_gQ_h\varphi,(I-\mathbb{Q}_s)\nabla_gv)_T|=0.
$$

For the case of  $s<\max\{k-1,\ell\}$,  we apply \eqref{weak gradient-1}, Lemma \ref{pre-commu} with $\boldsymbol{\psi}_s={\color{black}{a}}\mathbb{Q}_s\nabla_gv$, the Cauchy-Schwarz inequality, Lemmas \ref{error estimate for projection}-\ref{gradient property},  and the inverse inequality to obtain
\begin{equation*}
\begin{split}
 &|\sum_{T\in{\cal T}_h}({\color{black}{a}}\nabla_gQ_h\varphi,(I-\mathbb{Q}_s)\nabla_gv)_T|\\
=&|\sum_{T\in{\cal T}_h}(\nabla_gQ_h\varphi,{\color{black}{a}}\nabla_gv)_T
  -(\nabla_gQ_h\varphi,{\color{black}{a}}\mathbb{Q}_s\nabla_gv)_T|\\
=&|\sum_{T\in{\cal T}_h}(\nabla Q_0\varphi+\delta_gQ_h\varphi,{\color{black}{a}}\nabla_gv)_T-(\nabla\varphi,{\color{black}{a}}\mathbb{Q}_s\nabla_gv)_T
  -(\varphi-Q_0\varphi,\nabla\cdot({\color{black}{a}}\mathbb{Q}_s\nabla_gv))_T|\\
=&|\sum_{T\in{\cal T}_h}(\nabla Q_0\varphi-\nabla\varphi,{\color{black}{a}}\nabla_gv)_T
  +(\nabla\varphi-\mathbb{Q}_s\nabla\varphi,{\color{black}{a}}\nabla_gv)_T+(\delta_gQ_h\phi,{\color{black}{a}}\nabla_gv)_T\\
 &-(\varphi-Q_0\varphi,\nabla\cdot({\color{black}{a}}\mathbb{Q}_s\nabla_gv))_T|\\
\lesssim&\Big(\sum_{T\in{\cal T}_h}\|\nabla Q_0\varphi-\nabla\varphi\|_{T}^2\Big)^{\frac{1}{2}}\3barv\3bar+\Big(\sum_{T\in{\cal T}_h}\|\nabla\varphi-\mathbb{Q}_s\nabla\varphi\|_{T}^2\Big)^{\frac{1}{2}}\3barv\3bar\\
&+\Big(\sum_{T\in{\cal T}_h}\|\delta_gQ_h\varphi\|_{T}^2\Big)^{\frac{1}{2}}\3barv\3bar+\Big(\sum_{T\in{\cal T}_h}\|\varphi-Q_0\varphi\|_{T}^2\Big)^{\frac{1}{2}}\Big(\sum_{T\in{\cal T}_h}\|\nabla({\color{black}{a}}\mathbb{Q}_s\nabla_gv)\|_{T}^2\Big)^{\frac{1}{2}}\\
\lesssim&\begin{cases} \Big(\sum_{T\in{\cal T}_h}\|\varphi\|_{1,T}^2\Big)^{\frac{1}{2}}\3barv\3bar
    +\Big(\sum_{T\in{\cal T}_h}h_T^2\|\varphi\|_{1,T}^2\Big)^{\frac{1}{2}}\Big(\sum_{T\in{\cal T}_h}h_T^{-2}\|\nabla_g\varphi\|_{T}^2\Big)^{\frac{1}{2}}, \\
    ~~~~~~ if~ k=0,~s<\max\{k-1,\ell\},&\\
 (h^k\|\varphi\|_{k+1}+h^{s+1}\|\varphi\|_{s+2}+h^k\|\varphi\|_{k+1}+h^{k+1}h^{-1}\|\varphi\|_{k+1})\3barv\3bar, \\
~~~~~~~~  if~ k>0,~s<\max\{k-1,\ell\},&
\end{cases}\\
\lesssim&\begin{cases} \|\varphi\|_{1}\3barv\3bar,~~~~~~~~~~~~~~~~~~~~~~~~~~~~~~if~ k=0,~s<\max\{k-1,\ell\}, &\\ (h^k\|\varphi\|_{k+1}+h^{s+1}\|\varphi\|_{s+2})\3barv\3bar,
~~~~~~~~  if~ k>0,~s<\max\{k-1,\ell\}.&
\end{cases}
\end{split}
\end{equation*}
This completes the proof of the lemma.
\end{proof}

\begin{lemma}\label{L2-estimate-pre}
Recall $s=\min\{j,\ell\}$. { Let $\Phi\in H^2(\O)$ and $\Phi=0$ on $\pa\O$. For any $u\in H^{k+1}(\O)\cap H^{s+2}(\O)$,} there holds

\begin{equation}\label{L2-estimate-1}
\begin{split} |s(Q_{h}u,Q_{h}\Phi)|
&\lesssim\begin{cases} h^{\gamma+1}{ \|u\|_{1}}\|\Phi\|_2,& \mbox{if}~ k=0,\\
h^{k+\gamma+2}{ \|u\|_{k+1}}\|\Phi\|_2,&  \mbox{if}~ k>0,
         \end{cases}\\
\end{split}
\end{equation}

\begin{equation}\label{L2-estimate-2}
\begin{split} &|\sum_{T\in{\cal T}_h}(u-Q_0u,\nabla\cdot({\color{black}{a}}\mathbb{Q}_s\nabla_gQ_h\Phi))_T|\\
\lesssim&\begin{cases} 0,& ~\mbox{if}~s\leq1,~ k\geq0,~or~s>1,~k\geq s-1,\\
{ \|u\|_1}\|\Phi\|_2,& ~\mbox{if}~k=0,~ s>1,\\
h^{k+1}{ \|u\|_{k+1}}\|\Phi\|_2,&  \mbox{otherwise},
         \end{cases}
\end{split}
\end{equation}

\begin{equation}\label{L2-estimate-3}
\begin{split} &|\sum_{T\in{\cal T}_h}((\mathbb{Q}_s-I)({\color{black}{a}}\nabla u),\nabla Q_0\Phi)_T|\\
\lesssim&\begin{cases} 0,& \mbox{if}~k\leq1,or~s\geq k-1,\\
h^{s+2}{ \|u\|_{s+2}}\|\Phi\|_2,&  \mbox{otherwise},
         \end{cases}
\end{split}
\end{equation}

\begin{equation}\label{L2-estimate-4}
\begin{split} |\sum_{T\in {\cal T}_h}\langle(I-\mathbb{Q}_{s})({\color{black}{a}}\nabla u)\cdot\textbf{n},Q_0\Phi-Q_b\Phi\rangle_{\pa T}|
&\lesssim\begin{cases} h^{s+1}{ \|u\|_{s+2}}\|\Phi\|_2,& \mbox{if}~k=0,\\
h^{s+2}{ \|u\|_{s+2}}\|\Phi\|_2,&  \mbox{if}~k>0,
         \end{cases}
\end{split}
\end{equation}

\begin{equation}\label{L2-estimate-5}
\begin{split} &|\sum_{T\in{\cal T}_h}({\color{black}{a}}\nabla_gQ_hu,(I-\mathbb{Q}_s)\nabla_gQ_h\Phi)_T|\\
\lesssim&\begin{cases} 0,& \mbox{if}~s\geq\max\{k-1,\ell\},\\
\|u\|_1\|\Phi\|_2,& \mbox{if}~k=0,~s<\max\{k-1,\ell\},\\
{ (h^{k+1}\|u\|_{k+1}+h^{s+2}\|u\|_{s+2})\|\Phi\|_2,}&  \mbox{if}~k>0,~s<\max\{k-1,\ell\}.
         \end{cases}
\end{split}
\end{equation}
\end{lemma}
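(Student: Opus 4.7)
The plan is to prove the five bounds \eqref{L2-estimate-1}--\eqref{L2-estimate-5} by specializing the arguments used for Lemma \ref{energy-estimate-pre} to the test function $v=Q_h\Phi=\{Q_0\Phi,Q_b\Phi\}$, so that each $\3bar v\3bar$ factor from the energy estimates is converted into an explicit $h$-power by exploiting the $H^2$-regularity of $\Phi$ via the projection estimates \eqref{e1}--\eqref{e2}, Lemma \ref{gradient property}, and the standard trace/inverse inequalities. Roughly, each bound in Lemma \ref{energy-estimate-pre} converts into a corresponding $L^2$ bound with one extra power of $h$ coming from $\Phi$-approximation.

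For \eqref{L2-estimate-1}, I expand $s(Q_hu,Q_h\Phi)=\sum_T\rho h_T^\gamma\langle Q_b(Q_0u-u),Q_b(Q_0\Phi-\Phi)\rangle_{\partial T}$, apply Cauchy--Schwarz, discard $Q_b$ by its $L^2$-stability, and then use the trace inequality \eqref{trace-inequality} with \eqref{e1} separately on $u$ and on $\Phi$. For $k=0$ the $P_0$-projection of $\Phi$ only gives $O(h^{1/2})\|\Phi\|_1$ on $\partial T$, producing the $h^{\gamma+1}$ case; for $k\geq1$ the $P_k$-projection of $\Phi\in H^2$ gives $O(h^{3/2})\|\Phi\|_2$, producing the $h^{k+\gamma+2}$ case. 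For \eqref{L2-estimate-2} the vanishing cases follow immediately from $L^2$-orthogonality of $Q_0$, since $\nabla\cdot(a\mathbb{Q}_s\nabla_gQ_h\Phi)$ then lies in $P_k(T)$; in the remaining range I apply Cauchy--Schwarz, the inverse inequality, and Lemmas \ref{error estimate for projection}--\ref{gradient property} to bound $\|\nabla_gQ_h\Phi\|_T$, separating $k=0$ from $k>0$ as dictated by \eqref{e1}. Estimate \eqref{L2-estimate-4} is obtained in the same spirit as \eqref{technique-estimate-4}, with $v_0-v_b$ replaced by $Q_0\Phi-Q_b\Phi$ and the trace/projection bounds now evaluated on $\Phi\in H^2$; the two cases come from whether $k=0$ (no gain from $\nabla Q_0\Phi$) or $k>0$ (an extra half-power from $\|Q_0\Phi-Q_b\Phi\|_{\partial T}$).

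The more delicate bounds are \eqref{L2-estimate-3} and \eqref{L2-estimate-5}, where direct mimicry of the energy proof loses a power of $h$. For \eqref{L2-estimate-3} the trivial cases follow from $(\mathbb{Q}_s-I)(a\nabla u)\perp P_s^d$ since $\nabla Q_0\Phi\in P_{k-1}^d\subseteq P_s^d$; in the remaining regime $k\geq2$, $s\leq k-2$, the key trick is to use the same orthogonality to subtract $\mathbb{Q}_s\nabla\Phi\in P_s^d$, reducing the integrand to $(\mathbb{Q}_s-I)(a\nabla u)$ paired with $\nabla Q_0\Phi-\mathbb{Q}_s\nabla\Phi$, whose $L^2$-norm is bounded by $h_T\|\Phi\|_{2,T}$ via the triangle inequality routed through $\nabla\Phi$ and the estimates \eqref{e1}, \eqref{e2}. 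Cauchy--Schwarz then delivers the target $h^{s+2}\|u\|_{s+2}\|\Phi\|_2$. Estimate \eqref{L2-estimate-5} is handled analogously to \eqref{technique-estimate-5}: the vanishing case uses the definition of $\nabla_g$ on $[P_\ell(T)]^d$ combined with the definition of $\mathbb{Q}_s$; in the remaining cases I apply Lemma \ref{pre-commu} with $\boldsymbol{\psi}_s=a\mathbb{Q}_s\nabla_gQ_h\Phi$ to rewrite the term as a sum of four contributions that are bounded individually by Cauchy--Schwarz, the inverse inequality, Lemmas \ref{error estimate for projection}--\ref{gradient property} applied to both $u$ and $\Phi$.

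The main obstacle will be the bookkeeping of the case structure in \eqref{L2-estimate-3} and \eqref{L2-estimate-5}: the sharp powers $h^{s+2}$ and $h^{k+1}\wedge h^{s+2}$ require the orthogonality-based cancellation described above rather than a naive Cauchy--Schwarz, and one must keep careful track of which factor ($u$ or $\Phi$, $k$ or $s$) supplies which power of $h$. A secondary awkwardness is the $k=0$, $s<\max\{k-1,\ell\}$ branch of \eqref{L2-estimate-5}, which mirrors \eqref{technique-estimate-5} in yielding only an $O(1)$-bound $\|u\|_1\|\Phi\|_2$; this is unavoidable because the $P_0$-projection of $\Phi$ provides no interior gradient information against which to test $(I-\mathbb{Q}_s)\nabla_gv$, and this lossy bound propagates into the $L^2$ convergence theorem for low-order elements.
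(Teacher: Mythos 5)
Your overall strategy is the paper's: specialize the error-equation remainder to $v=Q_h\Phi$ and convert the $\3bar v\3bar$ factors into explicit $h$-powers using the $H^2$ regularity of $\Phi$. Your treatments of \eqref{L2-estimate-1} and \eqref{L2-estimate-2} match the paper's, and your handling of \eqref{L2-estimate-3} (subtracting an element of $[P_s(T)]^d$ by orthogonality and routing the remainder through $\nabla\Phi$) is an equivalent variant of the paper's identity $((\mathbb{Q}_s-I)(a\nabla u),\nabla Q_0\Phi)_T=((\mathbb{Q}_s-I)(a\nabla u),(I-\mathbb{Q}_s)\nabla Q_0\Phi)_T$. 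However, there are two genuine gaps.

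First, in \eqref{L2-estimate-4} with $k>0$ your plan bounds $\|Q_0\Phi-Q_b\Phi\|_{\pa T}$ directly and claims an ``extra half-power.'' That only works when $j\geq1$: for $j=0$ (hence $s=0$) the projection $Q_b$ onto $P_0(e)$ gives $\|\Phi-Q_b\Phi\|_{\pa T}\lesssim h^{1/2}\|\Phi\|_{1,T}$, not $h^{3/2}\|\Phi\|_{2,T}$, so your route yields only $h^{s+1}$ instead of the claimed $h^{s+2}$ --- and $k>0$, $j=0$ is exactly the flagship $P_1(T)/P_0(\pa T)/[P_1(T)]^2$ case for which the $\mathcal O(h^2)$ $L^2$ rate is advertised. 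The paper avoids this by splitting $Q_0\Phi-Q_b\Phi=(Q_0\Phi-\Phi)+(\Phi-Q_b\Phi)$ and showing the second piece contributes \emph{exactly zero}: since $s\leq j$, the trace of $\mathbb{Q}_s(a\nabla u)\cdot\textbf{n}$ lies in $P_j(e)$ and is $L^2(e)$-orthogonal to $\Phi-Q_b\Phi$, and the surviving sum $\sum_T\langle a\nabla u\cdot\textbf{n},\Phi-Q_b\Phi\rangle_{\pa T}$ telescopes to $\pa\O$ where $\Phi=0$. This cancellation is indispensable and is absent from your proposal. Second, in \eqref{L2-estimate-5} for $k>0$, $s<\max\{k-1,\ell\}$, the recipe you give --- apply Lemma \ref{pre-commu} with $\boldsymbol{\psi}_s=a\mathbb{Q}_s\nabla_gQ_h\Phi$ and bound the four resulting contributions individually --- is precisely the ``direct mimicry'' you yourself warn loses a power of $h$: it reproduces the energy bound $(h^k\|u\|_{k+1}+h^{s+1}\|u\|_{s+2})$ times an $O(1)$ quantity coming from $Q_h\Phi$, one power short of $h^{k+1}$ and $h^{s+2}$. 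The paper instead keeps the factor $(I-\mathbb{Q}_s)\nabla_gQ_h\Phi$ intact, expands only the first slot as $\nabla_gQ_hu=(\nabla Q_0u-\nabla u)+(I-\mathbb{Q}_s)\nabla u+\mathbb{Q}_s\nabla u+\delta_gQ_hu$, annihilates the $\mathbb{Q}_s\nabla u$ term by orthogonality (using that $a$ is piecewise constant), and recovers the missing power from $\|(I-\mathbb{Q}_s)\nabla_gQ_h\Phi\|_T\lesssim h_T|\nabla_gQ_h\Phi|_{1,T}\lesssim h_T\|\Phi\|_{2,T}$. You need to state this (or an equivalent) cancellation explicitly; without it the stated rates in \eqref{L2-estimate-4} and \eqref{L2-estimate-5} are not reached.
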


\begin{proof}
As to  \eqref{L2-estimate-1}, using the Cauchy-Schwarz inequality, the trace inequality \eqref{trace-inequality}, and \eqref{e1} gives
\begin{equation*}\label{error-equation-4}
\begin{split}
|s(Q_{h}u,Q_{h}\Phi)|
=&|\sum_{T\in {\cal T}_h}\rho h_T^{\gamma}\langle Q_b(Q_0u)-Q_bu,Q_b(Q_0\Phi)-Q_b\Phi\rangle_{\pa T}|\\
\lesssim&\Big(\sum_{T\in {\cal T}_h}\rho h_T^{\gamma}\|Q_0u-u\|_{\pa T}^2\Big)^{\frac{1}{2}}
\Big(\sum_{T\in {\cal T}_h}\rho h_T^{\gamma}\|Q_0\Phi-\Phi\|_{\pa T}^2\Big)^{\frac{1}{2}}\\
\lesssim&\Big(\sum_{T\in {\cal T}_h}h_T^{\gamma}(h_T^{-1}\|Q_0u-u\|_{T}^2+h_T\|\nabla(Q_0u-u)\|_{T}^2)\Big)^{\frac{1}{2}}\\
&\cdot\Big(\sum_{T\in {\cal T}_h}h_T^{\gamma}(h_T^{-1}\|Q_0\Phi-\Phi\|_{T}^2+h_T\|\nabla(Q_0\Phi-\Phi)\|_{T}^2)\Big)^{\frac{1}{2}}\\
\lesssim&
\left\{
\begin{array}{lr}
h^{\frac{\gamma+1}{2}}\|u\|_1h^{\frac{\gamma+1}{2}}\|\Phi\|_1,\quad if~k=0,\\
h^{\frac{2k+1+\gamma}{2}}h^{\frac{3+\gamma}{2}}\|u\|_{k+1}\|\Phi\|_2,\;~\qquad if~k>0,
\end{array}
\right.\\
\lesssim&
\left\{
\begin{array}{lr}
h^{\gamma+1}{ \|u\|_{1}}\|\Phi\|_2,\quad if~k=0,\\
h^{k+\gamma+2}{ \|u\|_{k+1}}\|\Phi\|_2,\;~\qquad if~k>0.
\end{array}
\right.
\end{split}
\end{equation*}

As to \eqref{L2-estimate-2}, for the case of $s\leq1$ and the case of $s>1$ and $k\geq s-1$, using  the definition of $Q_0$ gives
\begin{equation*}\label{error-equation-52}
\begin{split}
|\sum_{T\in{\cal T}_h}(u-Q_0u,\nabla\cdot({\color{black}{a}}\mathbb{Q}_s\nabla_gQ_h\Phi))_T|=0.
\end{split}
\end{equation*}
For the case of $s>1$ and $k<s-1$, using the Cauchy-Schwarz inequality, \eqref{e1}, the generalized weak gradient \eqref{weak gradient-1}, the inverse inequality and Lemma \ref{gradient property}, yields
\begin{equation*}
\begin{split}
&|\sum_{T\in{\cal T}_h}(u-Q_0u,\nabla\cdot({\color{black}{a}}\mathbb{Q}_s\nabla_gQ_h\Phi))_T|\\
\lesssim&\Big(\sum_{T\in{\cal T}_h}\|u-Q_0u\|_T^2\Big)^{\frac{1}{2}}
      \Big(\sum_{T\in{\cal T}_h}|\nabla_gQ_h\Phi|_{1,T}^2\Big)^{\frac{1}{2}}\\
\lesssim&h^{k+1}\Big(\sum_{T\in{\cal T}_h}|\nabla Q_0\Phi+\delta_gQ_h\Phi|_{1,T}^2\Big)^{\frac{1}{2}}\|u\|_{k+1}\\
\lesssim&h^{k+1}\Big(\sum_{T\in{\cal T}_h}|\nabla Q_0\Phi|_{1,T}^2+h_T^{-2}\|\delta_gQ_h\Phi\|_{T}^2\Big)^{\frac{1}{2}}\|u\|_{k+1}\\
\lesssim&
\left\{
\begin{array}{lr}
h\Big(\sum_{T\in{\cal T}_h}h_T^{-2}\|\Phi\|_{1,T}^2\Big)^{\frac{1}{2}}\|u\|_1,\quad if~k=0,\\
h^{k+1}\Big(\|\Phi\|_{2}^2+\sum_{T\in{\cal T}_h}h_T^{-2}h_T^2\|\Phi\|_{2,T}^2\Big)^{\frac{1}{2}}\|u\|_{k+1},\;~\qquad if~k>0,
\end{array}
\right.\\
\lesssim&
\left\{
\begin{array}{lr}
{ \|u\|_1}\|\Phi\|_2,\quad if~k=0,\\
h^{k+1}{ \|u\|_{k+1}}\|\Phi\|_2,\;~\qquad if~k>0.
\end{array}
\right.
\end{split}
\end{equation*}

As to \eqref{L2-estimate-3}, for the case of $k\leq1$ and the case of $s\geq k-1$, one arrives at
\begin{equation*}\label{error-equation-60}
\begin{split}
|\sum_{T\in{\cal T}_h}((\mathbb{Q}_s-I)({\color{black}{a}}\nabla u),\nabla Q_0\Phi)_T|=0.
\end{split}
\end{equation*}
For the case of $k>1$ and $s<k-1$, the definition of $\mathbb{Q}_s$, the Cauchy-Schwarz inequality and \eqref{e2} are used to obtain
\begin{equation*}\label{error-equation-5}
\begin{split}
 &|\sum_{T\in{\cal T}_h}((\mathbb{Q}_s-I)({\color{black}{a}}\nabla u),\nabla Q_0\Phi)_T|\\
=&|\sum_{T\in{\cal T}_h}((\mathbb{Q}_s-I)({\color{black}{a}}\nabla u),(I-\mathbb{Q}_s)\nabla Q_0\Phi)_T|\\
\lesssim&\Big(\sum_{T\in{\cal T}_h}\|(\mathbb{Q}_s-I){\color{black}{a}}\nabla u\|_T^2\Big)^{\frac{1}{2}}
     \Big(\sum_{T\in{\cal T}_h}\|(I-\mathbb{Q}_s)\nabla Q_0\Phi\|_T^2\Big)^{\frac{1}{2}}\\
\lesssim&h^{s+1}\|u\|_{s+2}\cdot h\|Q_0\Phi\|_2\\
\lesssim&h^{s+2}{ \|u\|_{s+2}}\|\Phi\|_2.
\end{split}
\end{equation*}

As to \eqref{L2-estimate-4},   applying the triangle inequality, $s=\min\{j,\ell\}$, the definition of $Q_b$, $\Phi=0$ on $\pa\O$, the Cauchy-Schwarz inequality, the trace inequality  \eqref{trace-inequality}, \eqref{e1}-\eqref{e2}  yields
\begin{equation*}\label{error-equation-6}
\begin{split}
&|\sum_{T\in {\cal T}_h}\langle(I-\mathbb{Q}_{s})({\color{black}{a}}\nabla u)\cdot\textbf{n},Q_0\Phi-Q_b\Phi\rangle_{\pa T}|\\
=&|\sum_{T\in {\cal T}_h}\langle(I-\mathbb{Q}_{s})({\color{black}{a}}\nabla u)\cdot\textbf{n},Q_0\Phi-\Phi\rangle_{\pa T}+\sum_{T\in {\cal T}_h}\langle(I-\mathbb{Q}_{s})({\color{black}{a}}\nabla u)\cdot\textbf{n},\Phi-Q_b\Phi\rangle_{\pa T}|\\
=&|\sum_{T\in {\cal T}_h}\langle(I-\mathbb{Q}_{s})({\color{black}{a}}\nabla u)\cdot\textbf{n},Q_0\Phi-\Phi\rangle_{\pa T}|+|\sum_{T\in {\cal T}_h}\langle {\color{black}{a}}\nabla u\cdot\textbf{n},\Phi-Q_b\Phi\rangle_{\pa T}|\\
=&|\sum_{T\in {\cal T}_h}\langle(I-\mathbb{Q}_{s})({\color{black}{a}}\nabla u)\cdot\textbf{n},Q_0\Phi-\Phi\rangle_{\pa T}|+| \langle {\color{black}{a}}\nabla u\cdot\textbf{n},\Phi-Q_b\Phi\rangle_{\pa \Omega}|\\
\lesssim&\Big(\sum_{T\in {\cal T}_h}\|(I-\mathbb{Q}_{s}){\color{black}{a}}\nabla u\|_{\pa T}^2\Big)^{\frac{1}{2}}\Big(\sum_{T\in {\cal T}_h}\|Q_0\Phi-\Phi\|_{\pa T}^2\Big)^{\frac{1}{2}}\\
\lesssim&\Big(\sum_{T\in {\cal T}_h}h_T^{-1}\|(I-\mathbb{Q}_{s}){\color{black}{a}}\nabla u\|_{T}^2+h_T\|\nabla((I-\mathbb{Q}_{s}){\color{black}{a}}\nabla u)\|_{T}^2\Big)^{\frac{1}{2}}\\
\cdot&\Big(\sum_{T\in {\cal T}_h}h_T^{-1}\|Q_0\Phi-\Phi\|_{T}^2+h_T\|\nabla(Q_0\Phi-\Phi)\|_{T}^2\Big)^{\frac{1}{2}}\\
\lesssim&\begin{cases} h^{s+1}\|u\|_{s+2}\|\Phi\|_1,& if~ k=0,\\
 h^{\frac{-1}{2}}h^{s+1}\|u\|_{s+2}\cdot h^{\frac{-1}{2}}h^{2}\|\Phi\|_2,&  if~ k>0,
         \end{cases}\\
\lesssim&\begin{cases} h^{s+1}{ \|u\|_{s+2}}\|\Phi\|_2,& if~ k=0,\\
h^{s+2}{ \|u\|_{s+2}}\|\Phi\|_2,&  if~ k>0.
         \end{cases}
\end{split}
\end{equation*}

As to \eqref{L2-estimate-5}, for the case of $s\geq\max\{k-1,\ell\}$, it follows from the definition of $\mathbb{Q}_s$ that
\begin{equation*}\label{error-equation-72}
\begin{split}
|\sum_{T\in{\cal T}_h}({\color{black}{a}}\nabla_gQ_hu,(I-\mathbb{Q}_s)\nabla_gQ_h\Phi)_T|=0.
\end{split}
\end{equation*}
For the case of $s<\max\{k-1,\ell\}$, using \eqref{weak gradient-1}, the definition of $\mathbb{Q}_s$, the Cauchy-Schwarz inequality, Lemma \ref{gradient property},   \eqref{e1}-\eqref{e2} and the inverse inequality  gives
{ \begin{equation*}\label{error-equation-7}
\begin{split}
 &|\sum_{T\in{\cal T}_h}({\color{black}{a}}\nabla_gQ_hu,(I-\mathbb{Q}_s)\nabla_gQ_h\Phi)_T|\\
=&|\sum_{T\in{\cal T}_h}(\nabla Q_0u+\delta_g Q_hu,{\color{black}{a}}(I-\mathbb{Q}_s)\nabla_gQ_h\Phi)_T|\\
=&|\sum_{T\in{\cal T}_h}((\nabla Q_0u-\nabla u)+(I-\mathbb{Q}_s)\nabla u+\delta_gQ_hu,{\color{black}{a}}(I-\mathbb{Q}_s)\nabla_gQ_h\Phi)_T|\\
\lesssim&\Big(\sum_{T\in{\cal T}_h}\|\nabla Q_0u-\nabla u\|_T^2+\|(I-\mathbb{Q}_s)\nabla u\|_T^2+\|\delta_gQ_hu\|_T^2\Big)^{\frac{1}{2}}\\
  &\cdot\Big(\sum_{T\in{\cal T}_h}\|{\color{black}{a}}(I-\mathbb{Q}_s)\nabla_gQ_h\Phi\|_{T}^2\Big)^{\frac{1}{2}}\\
\lesssim&\Big(\sum_{T\in{\cal T}_h}\|\nabla Q_0u-\nabla u\|_T^2+\|(I-\mathbb{Q}_s)\nabla u\|_T^2+h_T^{2k}\|u\|^2_{k+1,T}\Big)^{\frac{1}{2}}\\
  &\cdot\Big(\sum_{T\in{\cal T}_h}h_T^2|\nabla_gQ_h\Phi|_{1,T}^2\Big)^{\frac{1}{2}}\\
\lesssim&\begin{cases} \Big(\sum_{T\in{\cal T}_h}\|u\|^2_{1,T}\Big)^{\frac{1}{2}}
\Big(\sum_{T\in{\cal T}_h}\|\delta_gQ_h\Phi\|_{T}^2\Big)^{\frac{1}{2}},~ if~ k=0,~s<\max\{k-1,\ell\},&\\
\Big(\sum_{T\in{\cal T}_h}h_T^{2k}\|u\|^2_{k+1,T}+h_T^{2s+2}\|u\|^2_{s+2,T}+h_T^{2k}\|u\|^2_{k+1,T}\Big)^{\frac{1}{2}}
\Big(\sum_{T\in{\cal T}_h}h_T^2|\nabla Q_0\Phi+\delta_gQ_h\Phi|_{1,T}^2\Big)^{\frac{1}{2}},\\
~~~~~~~~  if~ k>0,~s<\max\{k-1,\ell\},&
\end{cases}\\
\lesssim&\begin{cases}\Big(\sum_{T\in{\cal T}_h}\|\Phi\|_{1,T}^2\Big)^{\frac{1}{2}}\|u\|_1,~ if~ k=0,~s<\max\{k-1,\ell\},&\\
(h^{k}\|u\|_{k+1}+h^{s+1}\|u\|_{s+2})
\Big(\sum_{T\in{\cal T}_h}h_T^2\|\Phi\|_{2,T}^2+h_T^2h_T^{-2}\|\delta_gQ_h\Phi\|_{T}^2\Big)^{\frac{1}{2}},\\
~~~~~~~~  if~ k>0,~s<\max\{k-1,\ell\},&
\end{cases}\\
\lesssim&\begin{cases}\|u\|_1\|\Phi\|_2,~ if~ k=0,~s<\max\{k-1,\ell\},&\\
(h^{k}\|u\|_{k+1}+h^{s+1}\|u\|_{s+2})
\Big(\sum_{T\in{\cal T}_h}h_T^2\|\Phi\|_{2,T}^2+h_T^2\|\Phi\|_{2,T}^2\Big)^{\frac{1}{2}},\\
~~\qquad\qquad\qquad if~ k>0,~s<\max\{k-1,\ell\},&
\end{cases}\\
\lesssim&\begin{cases}\|u\|_1\|\Phi\|_2,~ if~ k=0,~s<\max\{k-1,\ell\},&\\
(h^{k+1}\|u\|_{k+1}+h^{s+2}\|u\|_{s+2})\|\Phi\|_2,
~~~~~~~~  if~ k>0,~s<\max\{k-1,\ell\}.&
\end{cases}
\end{split}
\end{equation*}}
This completes the proof of the lemma.
\end{proof}

\section{Error Estimates}\label{Section:EEs}
The goal of this section is to establish some error estimates for the numerical approximation arising from the {\rm g}WG scheme \eqref{WG-scheme}.

\begin{theorem}\label{THM:energy-estimate}
Let $s=\min\{j,\ell\}$. Assume that the exact solution $u$ of the model problem \eqref{model-problem} is sufficiently regular such that $u\in H^{k+1}(\O)\cap H^{s+2}(\O)$. Let $u_h\in V_h$ be the numerical approximation arising from the {\rm gWG} scheme \eqref{WG-scheme}.  The  error estimate  holds true
\begin{equation*}\label{EQ:wb-pre:new}
\3bare_h\3bar\lesssim
\left\{
\begin{array}{lr}
(h^{\frac{1+\gamma}{2}}+1)\|u\|_1+h^{\frac{2s+1-\gamma}{2}}\|u\|_{s+2} ,\quad if ~k=0,~s=j ~ or ~k=0,~s=\ell,~ \ell>1,\\
h^{\frac{1+\gamma}{2}}\|u\|_1+h^{\frac{2s+1-\gamma}{2}}\|u\|_{s+2} ,\quad if ~ k=0,~s=\ell,~~\ell \leq 1,\\
(h^{\frac{2k+1+\gamma}{2}}+h^k)\|u\|_{k+1}+(h^{\frac{2s+1-\gamma}{2}}+h^{s+1})\|u\|_{s+2}  ,\\
~~~~~~~~if~k>0,s>1,k<s-1,~or~k>0,s<\max\{k-1,\ell\},\\
h^{\frac{2k+1+\gamma}{2}}\|u\|_{k+1}+(h^{\frac{2s+1-\gamma}{2}}+h^{s+1})\|u\|_{s+2}  ,\;~\qquad\qquad\qquad otherwise.
\end{array}
\right.
\end{equation*}
\end{theorem}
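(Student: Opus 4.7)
My plan is to use the energy method. Since both $Q_h u$ and $u_h$ satisfy $u_b=Q_b g$ on $\partial\Omega$, the error function $e_h$ lies in $V_h^0$, so I may set $v=e_h$ in the error equation \eqref{Error-equation-22}. By the definition \eqref{three-bar} of the triple-bar norm, the left-hand side collapses to $\3bar e_h\3bar^2$, giving $\3bar e_h\3bar^2=\zeta_u(e_h)$. It then suffices to bound each of the five remainder pieces in \eqref{error equation-remainder}.

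Second, I would apply the five estimates \eqref{technique-estimate-1}--\eqref{technique-estimate-5} of Lemma \ref{energy-estimate-pre} with $\varphi=u$ and test function $e_h$. Each bound has the form (power of $h$)$\cdot$(Sobolev norm of $u$)$\cdot\3bar e_h\3bar$. Summing the five contributions and dividing by $\3bar e_h\3bar$ yields an inequality of the expected shape $\3bar e_h\3bar \lesssim (\text{rates in }h)(\|u\|_{k+1}+\|u\|_{s+2})$.

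Third, the four cases in the theorem correspond exactly to the intersections of the subcases listed in Lemma \ref{energy-estimate-pre}. In detail: \eqref{technique-estimate-1} always contributes $h^{(2k+1+\gamma)/2}\|u\|_{k+1}$, and \eqref{technique-estimate-4} always contributes $h^{(2s+1-\gamma)/2}\|u\|_{s+2}$ (plus an $h^{s+1}\|u\|_{s+2}$ piece when $k>0$). For $k=0$ the factor $h^{(2k+1+\gamma)/2}\|u\|_{k+1}$ is $h^{(1+\gamma)/2}\|u\|_1$, and a constant-order $\|u\|_1$ term enters from \eqref{technique-estimate-2} precisely when $s>1$ and from \eqref{technique-estimate-5} precisely when $s<\max\{k-1,\ell\}=\ell$; this is why the bound for $k=0$, $s=\ell$, $\ell\leq1$ omits the extra $\|u\|_1$ summand while the other $k=0$ scenarios retain it. For $k>0$ the activations of \eqref{technique-estimate-2}, \eqref{technique-estimate-3}, and \eqref{technique-estimate-5} control when the corrections $h^k\|u\|_{k+1}$ and $h^{s+1}\|u\|_{s+2}$ are switched on, separating case 3 from case 4.

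The main obstacle is nothing conceptual but the careful bookkeeping: for each of the four displayed bounds one must verify which subcase of each of the five lemma estimates is in force, discard the zero contributions, and confirm that the remaining terms are majorized by the stated expression. Once that case table is written out, the theorem follows by a direct assembly of inequalities.
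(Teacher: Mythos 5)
Your proposal is correct and follows essentially the same route as the paper: take $v=e_h\in V_h^0$ in the error equation \eqref{Error-equation-22} to obtain $\3bar e_h\3bar^2=\zeta_u(e_h)$, then invoke Lemma \ref{energy-estimate-pre} with $\varphi=u$, $v=e_h$ and assemble the cases. Your accounting of which subcases of \eqref{technique-estimate-1}--\eqref{technique-estimate-5} activate the constant $\|u\|_1$ term (for $k=0$) and the $h^k\|u\|_{k+1}$, $h^{s+1}\|u\|_{s+2}$ corrections (for $k>0$) matches the paper's case table.
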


\begin{proof}
By taking $v=e_h\in V_h^0$ in the error equation \eqref{Error-equation-22}, one arrives at
 \begin{equation}\label{energy-estimate-2}
 \3bare_h\3bar^2=\zeta_u(e_h).
 \end{equation}
Substituting Lemma \ref{energy-estimate-pre} through setting $\varphi=u$ and $v=e_h$ into the right hand of \eqref{energy-estimate-2} yields
\begin{equation*}
\begin{split}
\3bare_h\3bar^2\lesssim&
\left\{
\begin{array}{lr}
(h^{\frac{1+\gamma}{2}}+1)\|u\|_1\3bare_h\3bar+h^{\frac{2s+1-\gamma}{2}}\|u\|_{s+2}\3bare_h\3bar,
~if~k=0,~s=j,~j\leq1,\\
(h^{\frac{1+\gamma}{2}}+1)\|u\|_1\3bare_h\3bar+h^{\frac{2s+1-\gamma}{2}}\|u\|_{s+2}\3bare_h\3bar,
~if~k=0,~s=j,~j>1,\\
h^{\frac{1+\gamma}{2}}\|u\|_1\3bare_h\3bar+h^{\frac{2s+1-\gamma}{2}}\|u\|_{s+2}\3bare_h\3bar,
~if~k=0,~s=\ell,~\ell\leq1,\\
(h^{\frac{1+\gamma}{2}}+1)\|u\|_1\3bare_h\3bar+h^{\frac{2s+1-\gamma}{2}}\|u\|_{s+2}\3bare_h\3bar,
~if~k=0,~s=\ell,~\ell>1,\\
h^{\frac{2k+1+\gamma}{2}}\|u\|_{k+1}\3bare_h\3bar+h^k\|u\|_{k+1}\3bare_h\3bar
 +(h^{\frac{2s+1-\gamma}{2}}+h^{s+1})\|u\|_{s+2}\3bare_h\3bar,\\
 ~~~~~~~~~~~if~k>0,s>1,k<s-1,~or~k>0,s<\max\{k-1,\ell\},\\
h^{\frac{2k+1+\gamma}{2}}\|u\|_{k+1}\3bare_h\3bar
 +(h^{\frac{2s+1-\gamma}{2}}+h^{s+1})\|u\|_{s+2}\3bare_h\3bar,~otherwise,\\
\end{array}
\right.\\
\lesssim&
\left\{
\begin{array}{lr}
((h^{\frac{1+\gamma}{2}}+1)\|u\|_1+h^{\frac{2s+1-\gamma}{2}}\|u\|_{s+2})\3bare_h\3bar,\quad if ~k=0,~s=j~ or ~k=0,~s=\ell, ~\ell>1,\\
(h^{\frac{1+\gamma}{2}}\|u\|_1+h^{\frac{2s+1-\gamma}{2}}\|u\|_{s+2})\3bare_h\3bar,\quad if  ~k=0,~s=\ell,~ \ell \leq 1,\\
((h^{\frac{2k+1+\gamma}{2}}+h^k)\|u\|_{k+1}+(h^{\frac{2s+1-\gamma}{2}}+h^{s+1})\|u\|_{s+2})\3bare_h\3bar,\\
~~~~~~~~if~k>0,s>1,k<s-1,~or~k>0,s<\max\{k-1,\ell\},\\
(h^{\frac{2k+1+\gamma}{2}}\|u\|_{k+1}+(h^{\frac{2s+1-\gamma}{2}}+h^{s+1})\|u\|_{s+2})\3bare_h\3bar,\;~\qquad\qquad\qquad otherwise,
\end{array}
\right.
\end{split}
\end{equation*}
which leads to the desired error estimate. This completes the proof of the Theorem.
\end{proof}

  { \begin{remark}
    Theorem \ref{THM:energy-estimate} implies that our {\em g}WG scheme \ref{WG-scheme} achieves a superconvergence order of $\mathcal{O}(h)$ in a discrete norm for the case of $k=0$, $s=\ell$, $\ell=1$, $\gamma=1$ and  an optimal convergence order $\mathcal{O}(h^k)$ for the case of  $k >0$, $s=k-1$, $\gamma=-1$.
    \end{remark}}

We shall derive an error estimate for the numerical approximation in the usual $L^2$ norm by using the standard duality argument. To this end, we shall consider the following dual problem that seeks $\Phi \in H^2(\Omega)$ satisfying 
 \begin{equation}\label{dual-equation}
 \begin{split}
-\nabla\cdot({\color{black}{a}}\nabla \Phi)&=e_0,~~ \text{in}~\O,\\
\Phi&=0,~~~~\pa\O.
\end{split}
\end{equation}
We assume that the dual problem \eqref{dual-equation} satisfies the $H^2$ regularity property in the sense that there exists a positive constant $C$ such that
 \begin{equation}\label{dual-regular}
\|\Phi\|_2\leq C\|e_0\|.
\end{equation}

\begin{theorem}\label{THM:L20-estimate}
Let $s=\min\{j,\ell\}$. Let $u_h\in V_h$ and $u\in H^{k+1}(\O)\cap H^{s+2}(\O)$ be the numerical solution of the {\rm gWG} scheme \eqref{WG-scheme} and the exact solution of the model problem \eqref{model-problem}, respectively. In addition, assume that the dual problem \eqref{dual-equation} satisfies the $H^2$ regularity property \eqref{dual-regular}. Then, the following error estimate  holds true
\begin{equation*}
\|e_0\|\lesssim
\left\{
\begin{array}{lr}
(h^{\frac{1+\gamma}{2}}+h^{\frac{1-\gamma}{2}})\3bare_h\3bar,\quad if~k=0,~~s=\ell,~~\ell\leq1,\\
\3bare_h\3bar,\;\quad if~k=0,~~s=\ell,~~\ell>1,~or~k=0,~s=j,\\
(h^{\frac{3+\gamma}{2}}+h^{\frac{1-\gamma}{2}}+h)\3bare_h\3bar,~if~k>0,\\
\end{array}
\right.
\end{equation*}
\end{theorem}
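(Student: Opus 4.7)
The plan is to apply the standard Aubin--Nitsche duality argument via the auxiliary dual problem \eqref{dual-equation}, exploiting the $H^2$ regularity bound $\|\Phi\|_2 \lesssim \|e_0\|$. The target is a ``symmetric duality identity''
\begin{equation*}
\|e_0\|^2 = \zeta_u(Q_h\Phi) - \zeta_\Phi(e_h),
\end{equation*}
in which $\zeta_\Phi(\cdot)$ is the functional formed analogously to $\zeta_u(\cdot)$ in \eqref{error equation-remainder}, but with $u$ replaced by $\Phi$. This isolates $\|e_0\|^2$ in a form where each side is amenable to the technical estimates already developed.

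First I would multiply $-\nabla\cdot(a\nabla\Phi) = e_0$ by $e_0$ and perform element-wise integration by parts. Because $\Phi\in H^2(\Omega)$ makes $a\nabla\Phi\cdot \mathbf n$ single-valued across interior edges and $e_b = 0$ on $\partial\Omega$, the boundary contributions involving $e_b$ collapse. Mirroring the derivation of Lemma \ref{error-equation} with $(u,f,v)$ replaced by $(\Phi, e_0, e_h)$---that is, invoking Lemma \ref{pre-commu} with $\phi = \Phi$ and $\boldsymbol\psi_s = a\mathbb Q_s\nabla_g e_h$, then following the chain of substitutions analogous to \eqref{error-equation-1-1}---I would arrive at the ``dual identity''
\begin{equation*}
\sum_{T\in\mathcal T_h}(a\nabla_g Q_h\Phi,\nabla_g e_h)_T = \|e_0\|^2 + (R_b)+(R_c)+(R_d)+(R_e),
\end{equation*}
where $(R_b),(R_c),(R_d),(R_e)$ are residual quantities structurally identical to the last four terms of \eqref{error equation-remainder} with $u$ and $v$ replaced by $\Phi$ and $e_h$.

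Next I would apply the error equation \eqref{Error-equation-22} tested at $v = Q_h\Phi$, which is an admissible element of $V_h^0$ because $\Phi=0$ on $\partial\Omega$ implies $Q_b\Phi=0$ there. Subtracting the error equation from the dual identity and using the symmetry $s(e_h,Q_h\Phi) = s(Q_h\Phi,e_h)$ recombines $s(Q_h\Phi,e_h)$ with the four residuals to produce $\zeta_\Phi(e_h)$, yielding the symmetric duality identity displayed above. To conclude, $|\zeta_u(Q_h\Phi)|$ is bounded by direct application of Lemma \ref{L2-estimate-pre}, whose five estimates \eqref{L2-estimate-1}--\eqref{L2-estimate-5} match the five summands of $\zeta_u(Q_h\Phi)$ term-by-term; $|\zeta_\Phi(e_h)|$ is bounded by analogous estimates in the spirit of Lemma \ref{energy-estimate-pre} applied with $\varphi = \Phi$, $v = e_h$, but modified to use only $\|\Phi\|_2$. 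Finally, $\|\Phi\|_2 \lesssim \|e_0\|$ lets us cancel one factor of $\|e_0\|$, and the three case-dependent $h$-powers emerge from the case distinctions already present in Lemma \ref{L2-estimate-pre}.

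The main obstacle will be the bounding of $\zeta_\Phi(e_h)$: unlike $\zeta_u(Q_h\Phi)$, which is handled by the ready-made Lemma \ref{L2-estimate-pre}, the analogous bounds for $\zeta_\Phi(e_h)$ cannot simply invoke Lemma \ref{energy-estimate-pre} since that lemma would demand $\Phi\in H^{k+1}\cap H^{s+2}$, whereas only $\Phi\in H^2$ is available. For example, the residual $\sum_T(\Phi - Q_0\Phi,\nabla\cdot(a\mathbb Q_s\nabla_g e_h))_T$ with $s$ large forces one to exploit the $L^2$-orthogonality of $\Phi-Q_0\Phi$ against $P_k$, combined with the inverse inequality on $\nabla_g e_h$ and capped approximation estimates for $Q_0\Phi$ at the $H^2$ level. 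Similar care is required for the flux-type residual $\sum_T \langle(I-\mathbb Q_s)(a\nabla\Phi)\cdot\mathbf n, e_0-e_b\rangle_{\partial T}$, where the trace inequality \eqref{trace-inequality} must be deployed together with the stabilizer's $h^\gamma$ scaling to produce the characteristic $h^{(1-\gamma)/2}$ factor in the theorem. The resulting bookkeeping, once carried through, yields the three branches recorded in the theorem statement.
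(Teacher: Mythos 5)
Your proposal follows essentially the same route as the paper: the same duality identity $\|e_0\|^2=\zeta_u(Q_h\Phi)-\zeta_\Phi(e_h)$ obtained by combining the element-wise integration by parts of the dual problem with Lemma \ref{pre-commu} and the error equation tested at $v=Q_h\Phi$, followed by Lemma \ref{L2-estimate-pre} for $\zeta_u(Q_h\Phi)$ and the $H^2$-capped version of Lemma \ref{energy-estimate-pre} for $\zeta_\Phi(e_h)$. Your remark that the bounds for $\zeta_\Phi(e_h)$ must be saturated at the $H^2$ regularity of $\Phi$ correctly identifies the (implicit) adjustment the paper makes in its own estimate of that term.
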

where $\3bare_h\3bar$ is given by Theorem \ref{THM:energy-estimate}.

\begin{proof}
Testing the dual equation \eqref{dual-equation} against $e_0$ and using the usual integration by parts, we have
\begin{equation}\label{error-equation-01}
\begin{split}
\|e_0\|^2
=&(-\nabla\cdot({\color{black}{a}}\nabla\Phi),e_0)\\
=&\sum_{T\in{\cal T}_h}({\color{black}{a}}\nabla\Phi,\nabla e_0)_T
  -\langle {\color{black}{a}}\nabla \Phi\cdot\textbf{n},e_0\rangle_{\pa T}\\
=&\sum_{T\in{\cal T}_h}((I-\mathbb{Q}_s)({\color{black}{a}}\nabla\Phi),\nabla e_0)_T
  -\langle {\color{black}{a}}\nabla \Phi\cdot\textbf{n},e_0-e_b\rangle_{\pa T}\\&+(\mathbb{Q}_s({\color{black}{a}}\nabla\Phi),\nabla e_0)_T,
\end{split}
\end{equation}
where we  used the fact $\sum_{T\in{\cal T}_h}\langle{\color{black}{a}}\nabla\Phi\cdot\textbf{n},e_b\rangle_{\pa T}=0$ since $e_b=0$ on $\pa\O$.

{ 
To deal with the last term on the last line in \eqref{error-equation-01}, choosing $u=\Phi$ and $v=e_h$, from the third line in \eqref{error-equation-1-1},   \eqref{error-equation-1},  and $s=\min\{j, \ell\}$,  we obtain
\begin{equation}\label{error-equation-02}
\begin{split}
&\sum_{T\in{\cal T}_h}(\mathbb{Q}_s({\color{black}{a}}\nabla\Phi),\nabla e_0)_T\\
=&\sum_{T\in{\cal T}_h}(\nabla\Phi,{\color{black}{a}}\mathbb{Q}_s\nabla_ge_h)_T
  -\langle e_b-Q_be_0,\mathbb{Q}_s({\color{black}{a}}\nabla\Phi)\cdot\textbf{n}\rangle_{\pa T}\\
=&\sum_{T\in{\cal T}_h}(\nabla_gQ_h\Phi,{\color{black}{a}}\mathbb{Q}_s\nabla_ge_h)_T
 -(\Phi-Q_0\Phi,\nabla\cdot({\color{black}{a}}\mathbb{Q}_s\nabla_ge_h))_T\\
  &-\langle e_b-e_0,\mathbb{Q}_s({\color{black}{a}}\nabla\Phi)\cdot\textbf{n}\rangle_{\pa T}\\
=&\sum_{T\in{\cal T}_h}({\color{black}{a}}\nabla_gQ_h\Phi,(\mathbb{Q}_s-I)\nabla_ge_h)_T+({\color{black}{a}}\nabla_gQ_h\Phi,\nabla_ge_h)_T\\
 &-(\Phi-Q_0\Phi,\nabla\cdot({\color{black}{a}}\mathbb{Q}_s\nabla_ge_h))_T
  -\langle e_b-e_0,\mathbb{Q}_s({\color{black}{a}}\nabla\Phi)\cdot\textbf{n}\rangle_{\pa T}.
\end{split}
\end{equation}
}
Substituting \eqref{error-equation-02} into \eqref{error-equation-01} and using the error equation \eqref{Error-equation-22} with $v=Q_h\Phi\in V_h^0$, we get
\begin{equation}\label{error-equation-2}
\begin{split}
\|e_0\|^2
=&\sum_{T\in{\cal T}_h}((I-\mathbb{Q}_s)({\color{black}{a}}\nabla\Phi),\nabla e_0)_T
  +\langle(\mathbb{Q}_s-I)({\color{black}{a}}\nabla\Phi)\cdot\textbf{n},e_0-e_b\rangle_{\pa T}\\
 &+({\color{black}{a}}\nabla_gQ_h\Phi,(\mathbb{Q}_s-I)\nabla_ge_h)_T-(\Phi-Q_0\Phi,\nabla\cdot({\color{black}{a}}\mathbb{Q}_s\nabla_ge_h))_T\\
  &+\zeta_u(Q_h\Phi)-s(e_h,Q_h\Phi)\\
=&-\zeta_\Phi(e_h)+\zeta_u(Q_h\Phi)+s(Q_h\Phi,e_h)-s(e_h,Q_h\Phi)\\
=&-\zeta_\Phi(e_h)+\zeta_u(Q_h\Phi),
\end{split}
\end{equation}
where the terms $\zeta_\Phi(e_h)$ and $\zeta_u(Q_h\Phi)$ are given by \eqref{error equation-remainder}.

{ Next, it suffices to deal with the two terms on the last line in \eqref{error-equation-2}. As to the first term $\zeta_\Phi(e_h)$, using \eqref{error equation-remainder}, Lemma \ref{energy-estimate-pre} with $\varphi=\Phi$, $v=e_h$ and \eqref{dual-regular}, there holds
\begin{equation}\label{error-equation-55}
\begin{split}
&|\zeta_\Phi(e_h)|\\
\lesssim&
\left\{
\begin{array}{lr}
(h^{\frac{1+\gamma}{2}}+1)\|\Phi\|_1\3bare_h\3bar+h^{\frac{1-\gamma}{2}}\|\Phi\|_{2}\3bare_h\3bar,
~if~k=0,~s=j,~j\leq1,\\
(h^{\frac{1+\gamma}{2}}+1)\|\Phi\|_1\3bare_h\3bar+h^{\frac{1-\gamma}{2}}\|\Phi\|_{2}\3bare_h\3bar,
~if~k=0,~~s=j,~j>1,\\
h^{\frac{1+\gamma}{2}}\|\Phi\|_1\3bare_h\3bar+h^{\frac{1-\gamma}{2}}\|\Phi\|_{2}\3bare_h\3bar,
~if~k=0,~~s=\ell,~\ell\leq1,\\
(h^{\frac{1+\gamma}{2}}+1)\|\Phi\|_1\3bare_h\3bar+h^{\frac{1-\gamma}{2}}\|\Phi\|_{2}\3bare_h\3bar,
~if~k=0,~~s=\ell,~\ell>1,\\
(h^{\frac{3+\gamma}{2}}+h+h^{\frac{1-\gamma}{2}})\|\Phi\|_2\3bare_h\3bar,~if~k>0,
\end{array}
\right.\\
\lesssim&
\left\{
\begin{array}{lr}
h^{\frac{1+\gamma}{2}}\|\Phi\|_1\3bare_h\3bar+h^{\frac{1-\gamma}{2}}\|\Phi\|_{2}\3bare_h\3bar,\quad if~k=0,~s=\ell,~~\ell\leq1,\\
(h^{\frac{1+\gamma}{2}}+1)\|\Phi\|_1\3bare_h\3bar+h^{\frac{1-\gamma}{2}}\|\Phi\|_{2}\3bare_h\3bar,\;~\quad if~k=0,~~s=\ell,~~\ell>1,~or~k=0,~s=j,\\
(h^{\frac{3+\gamma}{2}}+h^{\frac{1-\gamma}{2}}+h)\|\Phi\|_{2}\3bare_h\3bar,~if~k>0,
\end{array}
\right.\\
\lesssim&
\left\{
\begin{array}{lr}
(h^{\frac{1+\gamma}{2}}+h^{\frac{1-\gamma}{2}})\|e_0\|\3bare_h\3bar,\quad if~k=0,~~s=\ell,~~\ell\leq1,\\
(h^{\frac{1+\gamma}{2}}+h^{\frac{1-\gamma}{2}}+1) \|e_0\|\3bare_h\3bar,\;\quad if~k=0,~~s=\ell,~~\ell>1,~or~k=0,~s=j,\\
(h^{\frac{3+\gamma}{2}}+h^{\frac{1-\gamma}{2}}+h)\|e_0\|\3bare_h\3bar,~if~k>0.
\end{array}
\right.
\end{split}
\end{equation}
As to the second term $\zeta_u(Q_h\Phi)$, from Lemma \ref{L2-estimate-pre} and \eqref{dual-regular}, we have
\begin{equation}\label{error-equation-44}
\begin{split}
&|\zeta_u(Q_h\Phi)|\\
\lesssim&
\left\{
\begin{array}{lr}
((h^{\gamma+1}+1)\|u\|_1+h^{s+1}\|u\|_{s+2})\|\Phi\|_2,~if~k=0,~s=j,~j\leq1,\\
((h^{\gamma+1}+1)\|u\|_1+h^{s+1}\|u\|_{s+2})\|\Phi\|_2,~if~k=0,~s=j,~j>1,\\
(h^{\gamma+1}\|u\|_1+h^{s+1}\|u\|_{s+2})\|\Phi\|_2,~if~k=0,~s=\ell,~\ell\leq1,\\
((h^{\gamma+1}+1)\|u\|_1+h^{s+1}\|u\|_{s+2})\|\Phi\|_2,~if~k=0,~s=\ell,~\ell>1,\\
((h^{k+\gamma+2}+h^{k+1})\|u\|_{k+1}+h^{s+2}\|u\|_{s+2})\|\Phi\|_2,\\~~~~~~~~if~k>0,s>1,k<s-1,~or~k>0,s<\max\{k-1,\ell\},\\
(h^{k+\gamma+2}\|u\|_{k+1}+h^{s+2}\|u\|_{s+2})\|\Phi\|_2,~~~~~~~~otherwise,
\end{array}
\right.\\
\lesssim&
\left\{
\begin{array}{lr}
(h^{\gamma+1}\|u\|_1+h^{s+1}\|u\|_{s+2})\|e_0\|,\quad if~k=0,~~s=\ell,~~\ell\leq1,\\
((h^{\gamma+1}+1)\|u\|_1+h^{s+1}\|u\|_{s+2})\|\|e_0\|,\quad k=0,~~s=\ell,~~\ell>1,~or~k=0,~s=j,\\
(h^{k+1}(1+h^{\gamma+1})\|u\|_{k+1}+h^{s+2}\|u\|_{s+2})\|e_0\|,\\
~~~~~~~~if~k>0,s>1,k<s-1,~or~k>0,s<\max\{k-1,\ell\},\\
(h^{k+\gamma+2}\|u\|_{k+1}+h^{s+2}\|u\|_{s+2})\|e_0\|,\;~\qquad otherwise.
\end{array}
\right.
\end{split}
\end{equation}}
Substituting      \eqref{error-equation-55}-\eqref{error-equation-44} into \eqref{error-equation-2} leads to the desired results. This completes the proof.
\end{proof}

  { \begin{remark}
     Theorem \ref{THM:L20-estimate} implies that   $\|e_0\|$ has 
    an optimal  convergence order of $\mathcal{O}(h^{k+1})$ for the case of $k=0$, $s=\ell$, $\ell=1$, $\gamma=1$ and the case of $k\geq1$, $s=k-1$, $\gamma=-1$.
    \end{remark}}
  
To establish the error estimates for $e_b$, we introduce the following norm
$$
\|e_b\|_{\E_h}=\Big(\sum_{T\in{\cal T}_h}h_T\|e_b\|_{\pa T}^2\Big)^{\frac{1}{2}}.
$$

\begin{theorem}\label{THM:L2b-estimate}
In the assumptions of Theorem \ref{THM:energy-estimate}, we have the following error estimate
\begin{equation*}
\|e_b\|_{\E_h}\lesssim
\left\{
\begin{array}{l}
(h^{\frac{1+\gamma}{2}}+h^{\frac{1-\gamma}{2}})\3bare_h\3bar,~~\qquad \qquad if~k=0,~s=\ell,~\ell\leq1,\\
(1+h^{\frac{1-\gamma}{2}})\3bare_h\3bar,\qquad\qquad\qquad~ if~k=0,~s=\ell, ~\ell>1,~or~k=0,s=j,\\
(h^{\frac{3+\gamma}{2}}+h^{\frac{1-\gamma}{2}}+h)\3bare_h\3bar,\qquad\qquad\quad if~k>0,
\end{array}
\right.
\end{equation*}
\end{theorem}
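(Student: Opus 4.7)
The plan is to reduce the bound on $\|e_b\|_{\E_h}$ to the already-established bound on $\|e_0\|$ from Theorem \ref{THM:L20-estimate}, together with a routine control of the jump $Q_be_0-e_b$ by the stabilizer. The starting point is the triangle inequality applied edgewise,
\begin{equation*}
\|e_b\|_{\pa T}\ \leq\ \|Q_b e_0 - e_b\|_{\pa T}\ +\ \|Q_b e_0\|_{\pa T},
\end{equation*}
so that $\|e_b\|_{\E_h}^2=\sum_{T}h_T\|e_b\|_{\pa T}^2$ splits into two pieces, each of which I will handle separately.

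For the first piece, the very definition of the stabilizer gives
\begin{equation*}
\sum_{T\in{\cal T}_h} h_T\,\|Q_be_0-e_b\|_{\pa T}^2
\ \lesssim\ h^{1-\gamma}\sum_{T\in{\cal T}_h}\rho h_T^{\gamma}\|Q_be_0-e_b\|_{\pa T}^2
\ =\ h^{1-\gamma} s(e_h,e_h)\ \leq\ h^{1-\gamma}\,\3bar e_h\3bar^{2},
\end{equation*}
which after taking square roots contributes a term of order $h^{(1-\gamma)/2}\,\3bar e_h\3bar$. For the second piece I use that $e_0=Q_0u-u_0\in P_k(T)$ on each element, so the $L^2$-contractivity of $Q_b$ together with the inverse trace inequality \eqref{inverse-inequality} yields
\begin{equation*}
h_T\,\|Q_be_0\|_{\pa T}^2\ \leq\ h_T\,\|e_0\|_{\pa T}^2\ \lesssim\ \|e_0\|_T^2,
\end{equation*}
and summing over $T$ produces a contribution of $\|e_0\|$. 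Combining the two pieces gives the key inequality
\begin{equation*}
\|e_b\|_{\E_h}\ \lesssim\ h^{(1-\gamma)/2}\,\3bar e_h\3bar\ +\ \|e_0\|.
\end{equation*}

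Finally I would invoke Theorem \ref{THM:L20-estimate} to absorb $\|e_0\|$ into $\3bar e_h\3bar$: in the regime $k=0$, $s=\ell$, $\ell\leq 1$ one obtains $\|e_0\|\lesssim (h^{(1+\gamma)/2}+h^{(1-\gamma)/2})\3bar e_h\3bar$, yielding the first stated bound; in the regime $k=0$ with $s=\ell$, $\ell>1$ or $s=j$ one has $\|e_0\|\lesssim \3bar e_h\3bar$, yielding the second bound (the leading $1$ is simply retained); and for $k>0$ one has $\|e_0\|\lesssim(h^{(3+\gamma)/2}+h^{(1-\gamma)/2}+h)\3bar e_h\3bar$, which already dominates the extra $h^{(1-\gamma)/2}\,\3bar e_h\3bar$ from the first piece and gives the third bound. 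I do not expect a real obstacle here: the only nontrivial ingredient is Theorem \ref{THM:L20-estimate}, which has already been proved, and the remaining work is a two-term split plus case matching.
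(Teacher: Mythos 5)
Your proposal is correct and follows essentially the same route as the paper: the same triangle-inequality split of $e_b$ into $Q_be_0$ and $e_b-Q_be_0$, the same absorption of the jump term into $h^{1-\gamma}\3bar e_h\3bar^2$ via the stabilizer, the same trace-inequality bound $h_T\|Q_be_0\|_{\pa T}^2\lesssim\|e_0\|_T^2$, and the same final appeal to Theorem \ref{THM:L20-estimate}. The case matching at the end is also carried out exactly as the paper intends.
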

where $\3bare_h\3bar$ is given by Theorem \ref{THM:energy-estimate}.

\begin{proof}
It follows from the triangle inequality, the trace inequality \eqref{inverse-inequality} and \eqref{three-bar} that
\begin{equation*}
\begin{split}
\sum_{T\in{\cal T}_h}h_T\|e_b\|_{\pa T}^2
\lesssim&\sum_{T\in{\cal T}_h}h_T\|Q_be_0\|_{\pa T}^2+h_T\|e_b-Q_be_0\|_{\pa T}^2\\
\lesssim&\|Q_be_0\|^2+h^{1-\gamma}\3bare_h\3bar^2\\
\lesssim&\|e_0\|^2+h^{1-\gamma}\3bare_h\3bar^2,
\end{split}
\end{equation*}
which, together with Theorems \ref{THM:energy-estimate}-\ref{THM:L20-estimate}, leads to Theorem \ref{THM:L2b-estimate}. This completes the proof of the theorem.
\end{proof}

\section{Numerical Experiments}\label{Section:EE}

In this section, a series of numerical experiments are presented to verify  the convergence theory established in the previous sections.

Recall that the generalized discrete weak gradient is computed by \eqref{weak gradient-1}-\eqref{weak gradient-2}. For the simplicity of notation, the {\rm g}WG  element and the generalized discrete weak gradient are denoted by $P_k(T)/P_j(\pa T)/[P_\ell(T)]^2$ element. We choose $a$ in the model problem \eqref{model-problem} to be an identity matrix on the unit square domain $\O=(0,1)^2$. The uniform triangular partition and the uniform rectangular partition are employed. The uniform triangular partition is obtained through a successive refinement of an initial triangular partition of the domain $\O$ by   connecting the middle points of the edges of each triangular element. The uniform rectangular partition is generated from an initial $3\times 2$ rectangular partition of $\O$ with the next level of partition being obtained by connecting the middle points on the two parallel edges. 
In all tables,   ``Theory.rate'' means the the convergence theory established in this paper; and ``N/A'' means the convergence rate that has not been developed in this paper.




\subsection{The {\rm gWG} elements on the uniform triangular partition with smooth solutions}

In this section, the uniform triangular partition is employed and the exact solution is chosen as $u=\cos(\pi x)\cos(\pi y)$.


Table \ref{NE:TRI:v0} illustrates the performance of the $P_k(T)/P_{4}(\pa T)/[P_{4}(T)]^2$ elements with $k=3,4,5,6$. The stabilization parameters are given by $\rho=1$ and $\gamma=-1$. 
For $k=3,5,6$, the numerical  convergence rate   is in a good consistency with the theoretical convergence rate for $\3bare_h\3bar$, $\|e_0\|$ and $\|e_b\|_{\E_h}$ respectively. For $k=4$, the numerical  convergence rate is  consistent with the theoretical convergence rate for $\|e_b\|_{\E_h}$, and is higher than the theoretical convergence rate for $\3bare_h\3bar$ and $\|e_0\|$. 

\begin{table}[htbp]\centering\scriptsize
\begin{center}
\caption{Numerical errors and convergence rates for the $P_k(T)/P_{4}(\pa T)/[P_{4}(T)]^2$ elements.}\label{NE:TRI:v0}
\begin{tabular}{p{1.5cm}p{1.9cm}p{1.0cm}p{1.9cm}p{1.0cm}p{2cm}p{0.5cm}}
\hline
                         $1/h$     &$\3bare_h\3bar$  &Rate       &$\|e_0\|$ &Rate       &$\|e_b\|_{\E_h}$ &Rate\\ \hline
                                   & $k=3$           &           &          &           &                 &\\ \hline
                           8       &1.56E-04         &3.00       &1.33E-06  &4.11       &4.13E-06         &3.99\\
                          16       &1.95E-05         &3.00       &8.03E-08  &4.04       &2.60E-07         &3.99\\
                          32       &2.45E-06         &3.00       &4.96E-09  &4.02       &1.63E-08         &3.99\\
                          64       &3.06E-07         &3.00       &3.08E-010 &4.01       &1.02E-09         &4.00\\
\text{Theory.rate}                 &                 &3.0        &          &4.0        &                 &4.0\\
\hline
                                   & $k=4$           &           &          &           &                 &\\ \hline
                           4       &1.85E-04         &4.86       &9.12E-06  &5.90       &1.55E-06         &5.61\\
                           8       &6.58E-06         &4.81       &1.49E-07  &5.94       &3.79E-08         &5.35\\
                          16       &2.79E-07         &4.56       &2.59E-09  &5.85       &1.09E-09         &5.12\\
                          32       &1.47E-08         &4.25       &5.40E-011 &5.58       &3.34E-011        &5.03\\
\text{Theory.rate}                 &                 &4.0        &          &5.0        &                 &5.0\\ \hline
                                   &$k=5$            &           &          &           &                 &\\ \hline
                           2       &5.89E-03         &5.07       &6.89E-04  &6.08       &5.15E-05         &5.88\\
                           4       &1.94E-04         &4.92       &1.13E-05  &5.92       &9.22E-07         &5.80\\
                           8       &6.15E-06         &4.98       &1.80E-07  &5.98       &1.50E-08         &5.94\\
                          16       &1.93E-07         &4.99       &2.81E-09  &5.99       &2.39E-010        &5.97\\
\text{Theory.rate}                 &                 &5.0        &          &6.0        &                 &6.0\\  \hline

                                   & $k=6$           &           &          &           &                 &\\ \hline
                           2       &6.07E-03         &5.07       &7.20E-04  &6.08       &5.32E-05         &5.88\\
                           4       &2.00E-04         &4.92       &1.19E-05  &5.92       &9.55E-07         &5.80\\
                           8       &6.34E-06         &4.98       &1.88E-07  &5.98       &1.56E-08         &5.94\\
                          16       &1.99E-07         &4.99       &2.95E-09  &5.99       &2.48E-010        &5.98\\
\text{Theory.rate}                 &                 &5.0        &          &6.0        &                 &6.0\\
\hline
\end{tabular}
\end{center}
\end{table}
 
  The numerical results for the $P_0(T)/P_j(\pa T)/[P_\ell(T)]^2$ element with $\gamma=0$ and $\gamma=1$ are shown in Tables \ref{NE:Tri-2}-\ref{NE:Tri-3}. The stabilization parameter is $\rho=1$. We can see from Table \ref{NE:Tri-2} that the numerical results consist with the theoretical rates of convergence. In addition, for the $P_0(T)/P_j(\pa T)/[P_1(T)]^2$ for $j=1,2$ elements, the convergence rate for $\|e_0\|$ seems to be in an convergence order of ${\cal O}(h)$ which exceeds the theoretical prediction  ${\cal O}(h^{0.5})$. We observe from Table \ref{NE:Tri-3} that the numerical performance of {\rm g}WG methods is typically better than what the theory predicts.

\begin{table}[htbp]\centering\scriptsize
\begin{center}
\caption{Numerical errors and convergence rates for the $P_0(T)/P_j(\pa T)/[P_\ell(T)]^2$ elements when $\gamma=0$.} \label{NE:Tri-2}
\begin{tabular}{p{1.4cm}p{1.8cm}p{0.8cm}p{2cm}p{0.8cm}p{2cm}p{0.5cm}}
\hline
                         $1/h$     &$\3bare_h\3bar$  &Rate      &$\|e_0\|$ &Rate      &$\|e_b\|_{\E_h}$ &Rate\\
\hline
                                   &$j=0$,~$\ell=0$  &          &          &          &                 &           \\ \hline
                           16      &9.35E-01         &0.48      &8.77E-02  &0.99      &9.17E-03         &1.09\\
                           32      &6.65E-01         &0.49      &4.42E-02  &1.00      &4.46E-03         &1.04\\
                           64      &4.71E-01         &0.50      &2.21E-02  &1.00      &2.20E-03         &1.02\\
                          128      &3.34E-01         &0.50      &1.11E-02  &1.00      &1.09E-03         &1.01\\
\text{Theory.rate}                 &                 &0.5       &          &1.0       &                 &1.0\\ \hline
                                   &$j=1$,~$\ell=0$  &          &          &          &                 &           \\ \hline
                           16      &9.96E-01         &0.48      &8.77E-02  &0.96      &5.55E-02         &0.94\\
                           32      &7.09E-01         &0.49      &4.42E-02  &0.99      &2.82E-02         &0.98\\
                           64      &5.03E-01         &0.50      &2.21E-02  &1.00      &1.42E-02         &0.99\\
                          128      &3.56E-01         &0.50      &1.11E-02  &1.00      &7.14E-03         &0.99\\
\text{Theory.rate}                 &                 &0.5       &          &1.0       &                 &1.0\\ \hline
                                   &$j=1$,~$\ell=1$  &          &          &          &                 &         \\ \hline
                           16      &8.86E-02         &1.01      &1.27E-02  &0.98      &2.23E-02         &1.01\\
                           32      &4.39E-02         &1.01      &6.37E-03  &1.00      &1.11E-02         &1.01\\
                           64      &2.18E-02         &1.01      &3.18E-03  &1.00      &5.50E-03         &1.01\\
                          128      &1.09E-02         &1.01      &1.58E-03  &1.00      &2.74E-03         &1.00\\
\text{Theory.rate}                 &                 &0.5       &          &1.0       &                 &1.0\\ \hline
                                   &$j=2$,~$\ell=1$  &          &          &          &                 &           \\ \hline
                           16      &8.98E-02         &1.03      &1.27E-02  &0.98      &2.24E-02         &1.04\\
                           32      &4.42E-02         &1.02      &6.37E-03  &1.00      &1.11E-02         &1.02\\
                           64      &2.19E-02         &1.01      &3.18E-03  &1.00      &5.50E-03         &1.01\\
                          128      &1.09E-02         &1.01      &1.58E-03  &1.00      &2.75E-03         &1.00\\
\text{Theory.rate}                 &                 &0.5       &          &1.0       &                 &1.0\\ \hline
                                   &$j=2$,~$\ell=3$  &          &          &          &                 &           \\ \hline
                           16      &6.20E-00         &-0.03     &1.62E-01  &-0.01     &2.85E-01         &0.05\\
                           32      &6.26E-00         &-0.01     &1.62E-01  &0.00      &2.81E-01         &0.02\\
                           64      &6.28E-00         &-0.01     &1.62E-01  &0.00      &2.80E-01         &0.01\\
                          128      &6.30E-00         &-0.00     &1.61E-01  &0.00      &2.80E-01         &0.00\\
\text{Theory.rate}                 &                 &0.0       &          &0.0       &                 &0.0\\
\hline
\end{tabular}
\end{center}
\end{table}

\begin{table}[htbp]\centering\scriptsize
\begin{center}
\caption{Numerical errors and convergence rates for the $P_0(T)/P_j(\pa T)/[P_\ell(T)]^2$ elements when $\gamma=1$.}\label{NE:Tri-3}
\begin{tabular}{p{1.4cm}p{1.8cm}p{0.8cm}p{2cm}p{0.8cm}p{2cm}p{0.6cm}}
\hline
                         $1/h$     &$\3bare_h\3bar$  &Rate       &$\|e_0\|$ &Rate       &$\|e_b\|_{\E_h}$ &Rate\\
\hline
                                   &$j=0$,~$\ell=0$  &           &          &           &                 &           \\ \hline
                           16      &3.17E-00         &-0.01      &1.02E-00  &-0.01      &1.98E-03         &1.99\\
                           32      &3.17E-00         &-0.00      &1.02E-00  &-0.00      &4.95E-04         &2.00\\
                           64      &3.18E-00         &-0.00      &1.02E-00  &-0.00      &1.24E-04         &2.00\\
                          128      &3.18E-00         &-0.00      &1.02E-00  &-0.00      &3.09E-05         &2.00\\
\text{Theory.rate}                 &                 &0.0        &          &0.0        &                 &0.0\\ \hline
                                   &$j=1$,~$\ell=0$  &           &          &           &                 &\\ \hline
                           16      &3.17E-00         &-0.01      &1.02E-00  &-0.01      &5.48E-02         &0.94\\
                           32      &3.17E-00         &-0.00      &1.02E-00  &-0.00      &2.79E-02         &0.97\\
                           64      &3.18E-00         &-0.00      &1.02E-00  &-0.00      &1.41E-02         &0.99\\
                          128      &3.18E-00         &-0.00      &1.02E-00  &-0.00      &7.06E-03         &0.99\\
\text{Theory.rate}                 &                 &0.0        &          &0.0        &                 &0.0\\ \hline
                                   &$j=1$,~$\ell=1$  &           &          &           &                 &\\ \hline
                           16      &1.46E-02         &1.93       &2.22E-03  &1.92       &3.88E-03         &1.96\\
                           32      &3.73E-03         &1.97       &5.62E-04  &1.98       &9.76E-04         &1.99\\
                           64      &9.38E-04         &1.99       &1.41E-04  &1.99       &2.45E-04         &2.00\\
                          128      &2.35E-04         &2.00       &3.53E-05  &2.00       &6.12E-05         &2.00\\
\text{Theory.rate}                 &                 &1.0        &          &1.0        &                 &1.0\\ \hline
                                   &$j=2$,~$\ell=1$  &           &          &           &                 &\\ \hline
                           16      &1.53E-02         &1.93       &2.22E-03  &1.92       &4.48E-03         &1.96\\
                           32      &3.88E-03         &1.98       &5.62E-04  &1.98       &1.13E-03         &1.99\\
                           64      &9.77E-04         &1.99       &1.41E-04  &1.99       &2.83E-04         &2.00\\
                          128      &2.45E-04         &2.00       &3.53E-05  &2.00       &7.07E-05         &2.00\\
\text{Theory.rate}                 &                 &1.0        &          &1.0        &                 &1.0\\ \hline
                                   &$j=2$,~$\ell=3$  &           &          &           &                 &\\ \hline
                           16      &6.19E-00         &-0.03      &1.62E-01  &-0.01      &2.85E-01         &0.05\\
                           32      &6.25E-00         &-0.01      &1.62E-01  &0.00       &2.81E-01         &0.02\\
                           64      &6.28E-00         &-0.01      &1.62E-01  &0.00       &2.80E-01         &0.01\\
                          128      &6.29E-00         &-0.00      &1.61E-01  &0.00       &2.80E-01         &0.00\\
\text{Theory.rate}                 &                 &0.0        &          &0.0        &                 &0.0\\
\hline
\end{tabular}
\end{center}
\end{table}

Table \ref{NE:Tri-4} reports some numerical results of the $P_5(T)/P_j(\pa T)/[P_1(T)]^2$ elements for different values of $j$. The stabilization parameters are $\rho=1$ and $\gamma=-1$. These numerical results are greatly consistent with the established theory. The convergence rates for $\3bare_h\3bar$, $\|e_0\|$ and $\|e_b\|_{\E_h}$ are the same when different $j=2, 5,6,7$ are applied. 


\begin{table}[htbp]\centering\scriptsize
\begin{center}
\caption{Numerical errors and convergence rates for the $P_5(T)/P_j(\pa T)/[P_{1}(T)]^2$ elements.}
\label{NE:Tri-4}
\begin{tabular}{p{1.4cm}p{1.8cm}p{1.0cm}p{2cm}p{1.0cm}p{2cm}p{0.6cm}}
\hline
                         $1/h$    &$\3bare_h\3bar$  &Rate      &$\|e_0\|$ &Rate      &$\|e_b\|_{\E_h}$ &Rate\\
\hline
                                  &$j=0$            &          &          &          &                 &\\ \hline
                          8       &2.53E-01         &1.02      &6.75E-03  &2.12      &6.61E-03         &1.90\\
                          16      &1.26E-01         &1.01      &1.65E-03  &2.03      &1.68E-03         &1.98\\
                          32      &6.30E-02         &1.00      &4.09E-04  &2.01      &4.21E-04         &1.99\\
                          64      &3.15E-02         &1.00      &1.02E-04  &2.00      &1.05E-04         &2.00\\
\text{Theory.rate}                &                 &1.0       &          &2.0       &                 &2.0\\ \hline
                                  &$j=2$            &          &          &          &                 &\\ \hline
                          8       &5.52E-02         &1.98      &2.23E-03  &3.00      &5.32E-04         &3.67\\
                          16      &1.38E-02         &2.00      &2.78E-04  &3.00      &4.55E-05         &3.55\\
                          32      &3.46E-03         &2.00      &3.48E-05  &3.00      &4.67E-06         &3.28\\
                          64      &8.66E-04         &2.00      &4.34E-06  &3.00      &5.46E-07         &3.10\\
\text{Theory.rate}                &                 &2.0       &          &3.0       &                 &3.0\\ \hline
                                  &$j=5$            &          &          &          &                 &\\ \hline
                          8       &5.52E-02         &1.98      &2.23E-03  &3.00      &5.71E-04         &3.61\\
                          16      &1.38E-02         &2.00      &2.78E-04  &3.00      &5.25E-05         &3.44\\
                          32      &3.46E-03         &2.00      &3.48E-05  &3.00      &5.72E-06         &3.20\\
                          64      &8.66E-04         &2.00      &4.34E-06  &3.00      &6.85E-07         &3.06\\
\text{Theory.rate}                &                 &2.0       &          &3.0       &                 &3.0\\ \hline
                                  &$j=6$            &          &          &          &                 &\\ \hline
                          8       &5.52E-02         &1.98      &2.23E-03  &3.00      &5.71E-04         &3.61\\
                          16      &1.38E-02         &2.00      &2.78E-04  &3.00      &5.25E-05         &3.44\\
                          32      &3.46E-03         &2.00      &3.48E-05  &3.00      &5.72E-06         &3.20\\
                          64      &8.66E-04         &2.00      &4.34E-06  &3.00      &6.85E-07         &3.06\\
\text{Theory.rate}                &                 &2.0       &          &3.0       &                 &3.0\\ \hline
                                  &$j=7$            &          &          &          &                 &\\ \hline
                          8       &5.52E-02         &1.98      &2.23E-03  &3.00      &5.71E-04         &3.61\\
                          16      &1.38E-02         &2.00      &2.78E-04  &3.00      &5.25E-05         &3.44\\
                          32      &3.46E-03         &2.00      &3.48E-05  &3.00      &5.72E-06         &3.20\\
                          64      &8.66E-04         &2.00      &4.34E-06  &3.00      &6.85E-07         &3.06\\
\text{Theory.rate}                &                 &2.0       &          &3.0       &                 &3.0\\
\hline
\end{tabular}
\end{center}
\end{table}

Table \ref{NE:TRI:THIRD2} demonstrates the performance of the $P_4(T)/P_2(\pa T)/[P_\ell(T)]^2$ elements for different values of $\ell$. The stabilization parameters are given by $\rho=1$ and $\gamma=-1$. We observe that these numerical results are in an agreement with our theory. Moreover,  the convergence rates for the numerical approximations are the same for different $\ell=3, 4, 6$.

\begin{table}[htbp]\centering\scriptsize
\begin{center}
\caption{Numerical errors and convergence rates for the $P_4(T)/P_{2}(\pa T)/[P_\ell(T)]^2$ elements.} \label{NE:TRI:THIRD2}
\begin{tabular}{p{1.4cm}p{1.8cm}p{1.0cm}p{2cm}p{1.0cm}p{2cm}p{0.6cm}}
\hline
                         $1/h$     &$\3bare_h\3bar$  &Rate      &$\|e_0\|$ &Rate      &$\|e_b\|_{\E_h}$  &Rate\\
\hline
                                   &$\ell=1$         &          &          &          &                  &\\ \hline
                           8       &5.52E-02         &1.98      &2.23E-03  &3.00      &5.33E-04          &3.67\\
                          16       &1.38E-02         &2.00      &2.78E-04  &3.00      &4.57E-05          &3.55\\
                          32       &3.46E-03         &2.00      &3.47E-05  &3.00      &4.67E-06          &3.28\\
                          64       &8.66E-03         &2.00      &4.34E-06  &3.00      &5.45E-07          &3.10\\
\text{Theory.rate}                 &                 &2.0       &          &3.0       &                  &3.0\\ \hline
                                   &$\ell=3$         &          &          &          &                  &\\ \hline
                           8       &2.82E-04         &3.32      &5.42E-06  &4.76      &1.57E-06          &3.94\\
                          16       &3.25E-05         &3.11      &2.43E-07  &4.48      &9.87E-08          &3.99\\
                          32       &3.98E-06         &3.03      &1.32E-08  &4.20      &6.18E-09          &4.00\\
                          64       &4.95E-07         &3.01      &7.93E-010 &4.06      &3.86E-010         &4.00\\
\text{Theory.rate}                 &                 &3.0       &          &4.0       &                  &4.0\\ \hline
                                   &$\ell=4$         &          &          &          &                  &\\ \hline
                           8       &2.22E-04         &2.98      &2.34E-06  &3.97      &1.63E-06          &4.04\\
                          16       &2.67E-05         &2.99      &1.47E-07  &3.99      &9.97E-08          &4.03\\
                          32       &3.34E-06         &3.00      &9.18E-09  &4.00      &6.20E-09          &4.01\\
                          64       &4.17E-07         &3.00      &5.74E-010 &4.00      &3.87E-010         &4.00\\
\text{Theory.rate}                 &                 &3.0       &          &4.0       &                  &4.0\\ \hline
                                   &$\ell=6$         &          &          &          &                  &\\ \hline
                           4       &1.84E-03         &3.44      &3.49E-05  &4.10      &2.99E-05          &3.83\\
                           8       &1.99E-04         &3.21      &2.11E-06  &4.05      &1.69E-06          &4.14\\
                          16       &2.38E-05         &3.07      &1.30E-07  &4.02      &1.01E-07          &4.07\\
                          32       &2.94E-06         &3.02      &8.13E-09  &4.00      &6.22E-09          &4.02\\
\text{Theory.rate}                 &                 &3.0       &          &4.0       &                  &4.0\\
\hline
\end{tabular}
\end{center}
\end{table}

Table \ref{NE:Tri-5} illustrates the numerical performance of the $P_4(T)/P_{7}(\pa T)/[P_{5}(T)]^2$ element with different values of $\gamma$. The stabilization parameter is $\rho=1$. We observe that   the theoretical rates of convergence for  $\3bare_h\3bar$, $\|e_0\|$ and $\|e_b\|_{\E_h}$  are consistent with the theoretical prediction for the case of $\gamma=-1$.  The numerical results outperform the theory for $\gamma=-10^{-5},-\frac{1}{2},1$.

\begin{table}[htbp]\centering\scriptsize
\begin{center}
\caption{Numerical errors and convergence rates for the $P_4(T)/P_{7}(\pa T)/[P_{5}(T)]^2$ element.}\label{NE:Tri-5}
\begin{tabular}{p{1.4cm}p{1.8cm}p{1.0cm}p{2cm}p{1.0cm}p{2cm}p{0.6cm}}
\hline
                         $1/h$    &$\3bare_h\3bar$        &Rate      &$\|e_0\|$   &Rate      &$\|e_b\|_{\E_h}$   &Rate\\
\hline
                                  &$\gamma=-10^{-5}$      &          &            &          &                   &\\ \hline
                          2       &1.16E-03               &4.03      &3.58E-05    &4.29      &1.67E-04           &5.78\\
                          4       &3.57E-05               &5.02      &5.62E-07    &5.99      &2.75E-06           &5.92\\
                          8       &1.08E-06               &5.04      &8.67E-09    &6.02      &4.34E-08           &5.98\\
                          16      &3.43E-08               &4.98      &1.35E-010   &6.01      &6.82E-010          &5.99\\
\text{Theory.rate}                &                       &4.0       &            &4.5       &                   &4.5\\ \hline
                                  &$\gamma=-1$            &          &            &          &                   &\\ \hline
                          2       &9.09E-05               &4.07      &1.40e-06    &5.06      &4.75E-06           &5.31\\
                          4       &9.09E-05               &4.07      &1.34E-06    &5.06      &4.75E-06           &5.31\\
                          8       &5.61E-06               &4.02      &4.33E-08    &5.01      &1.37E-07           &5.12\\
                          16      &3.50E-07               &4.00      &1.35E-09    &5.00      &4.20E-09           &5.03\\
\text{Theory.rate}                &                       &4.0       &            &5.0       &                   &5.0\\ \hline
                                  &$\gamma=-\frac{1}{2}$  &          &            &          &                   &\\ \hline
                          2       &1.33E-03               &4.55      &4.07e-05    &5.53      &3.44E-06           &5.68\\
                          4       &5.67E-05               &4.55      &8.85E-07    &5.53      &3.44E-06           &5.68\\
                          8       &2.46E-06               &4.52      &1.95E-08    &5.50      &6.90E-08           &5.64\\
                          16      &1.10E-07               &4.49      &4.34E-010   &5.49      &1.45E-09           &5.58\\
\text{Theory.rate}                &                       &4.0       &            &4.75      &                   &4.75\\ \hline
                                  &$\gamma=1$             &          &            &          &                   &\\ \hline
                          2       &8.92E-04               &4.69      &2.79e-05    &4.97      &1.53E-04           &5.91  \\
                          4       &1.52E-05               &5.87      &2.43E-07    &6.84      &2.28E-06           &6.07\\
                          8       &2.44E-07               &5.96      &1.96E-09    &6.95      &3.49E-08           &6.03\\
                          16      &1.60E-08               &3.93      &3.65E-011   &5.75      &2.29E-09           &3.93\\
\text{Theory.rate}                &                       &4.0       &            &4.0       &                   &4.0\\
\hline
\end{tabular}
\end{center}
\end{table}

Table \ref{NE:Tri-6} presents the numerical results for the $P_5(T)/P_5(\pa T)/[P_{4}(T)]^2$ element with different values of $\rho$. The stabilization parameter is $\gamma=-1$. We can see that the convergence rates are perfectly consistent with the theory prediction for $\3bare_h\3bar$ and  $\|e_0\|$, and exceed the theory prediction for $\|e_b\|$.  

\begin{table}[htbp]\centering\scriptsize
\begin{center}
\caption{Numerical errors and convergence rates for the $P_5(T)/P_5(\pa T)/[P_{4}(T)]^2$ element.}\label{NE:Tri-6}
\begin{tabular}{p{1.4cm}p{1.8cm}p{1.0cm}p{2cm}p{1.0cm}p{1.7cm}p{0.8cm}}
\hline
                         $1/h$    &$\3bare_h\3bar$  &Rate      &$\|e_0\|$   &Rate      &$\|e_b\|_{\E_h}$   &Rate\\
\hline
                                  &{ $\rho=10^{-4}$}   &          &            &          &                   &\\ \hline
                          2       &6.21E-01         &5.07      &7.19E-00    &6.08      &2.78E-00           &-8.30\\
                          4       &2.05E-02         &4.92      &1.18E-01    &5.93      &2.30E-02           &6.91\\
                          8       &6.48E-04         &4.98      &1.87E-03    &5.98      &1.83E-04           &6.97\\
                          16      &2.03E-05         &5.00      &2.93E-05    &6.00      &1.61E-06           &6.83\\
\text{Theory.rate}                &                 &5.0       &            &6.0       &                   &6.0\\ \hline
                                  &$\rho=10^{-1}$   &          &            &          &                   &\\ \hline
                          2       &1.95E-02         &5.07      &7.16E-03    &6.08      &2.78E-03           &0.26\\
                          4       &6.44E-04         &4.92      &1.18E-04    &5.93      &2.31E-05           &6.91\\
                          8       &2.04E-05         &4.98      &1.86E-06    &5.98      &1.84E-07           &6.97\\
                          16      &6.39E-07         &5.00      &2.92E-08    &6.00      &1.63E-09           &6.82\\
\text{Theory.rate}                &                 &5.0       &            &6.0       &                   &6.0\\ \hline
                                  &$\rho=1$         &          &            &          &                   &\\ \hline
                          2       &5.89E-03         &5.07      &6.88E-04    &6.08      &2.85E-04           &3.42\\
                          4       &1.94E-04         &4.92      &1.13E-05    &5.92      &2.50E-06           &6.83\\
                          8       &6.15E-06         &4.98      &1.79E-07    &5.98      &2.38E-08           &6.71\\
                          16      &1.93E-07         &5.00      &2.81E-09    &6.00      &2.88E-010          &6.37\\
\text{Theory.rate}                &                 &5.0       &            &6.0       &                   &6.0\\ \hline
                                  &$\rho=10^4$      &          &            &          &                   &\\ \hline
                          2       &4.51E-02         &5.06      &7.26E-05    &6.10      &1.37E-04           &6.09\\
                          4       &1.47E-03         &4.94      &1.18E-06    &5.95      &2.18E-06           &5.97\\
                          8       &4.65E-05         &4.98      &1.83E-08    &6.01      &3.28E-08           &6.05\\
                          16      &1.46E-06         &5.00      &2.83E-010   &6.01      &5.01E-010          &6.03\\
\text{Theory.rate}                &                 &5.0       &            &6.0       &                   &6.0\\
\hline
\end{tabular}
\end{center}
\end{table}

Table \ref{NE:Tri-7} presents some numerical results for the $P_3(T)/P_j(\pa T)/[P_\ell(T)]^2$ elements for different values of $j$ and $\ell$ when $\rho=0$ is employed in the {\rm g}WG scheme \eqref{WG-scheme}. Note that the theory established in the previous sections applies to $\rho>0$. However, we have observed from Table \ref{NE:Tri-7}  that (1) For the case of $j\geq2$ and $\ell\geq4$, the convergence rates for $\3bare_h\3bar$, $\|e_0\|$ and $\|e_b\|_{\E_h}$ are  ${\cal O}(h^{3})$, ${\cal O}(h^{4})$ and ${\cal O}(h^{4})$, respectively; (2) For the case of $j=1$ and $\ell=4$, the  convergence rates for $\3bare_h\3bar$, $\|e_0\|$ and $\|e_b\|_{\E_h}$ are ${\cal O}(h^{2})$, ${\cal O}(h^{3})$ and ${\cal O}(h^{3})$, respectively.

\begin{table}[htbp]\centering\scriptsize
\begin{center}
\caption{Numerical errors and convergence rates for the $P_3(T)/P_j(\pa T)/[P_\ell(T)]^2$ elements.}\label{NE:Tri-7}
\begin{tabular}{p{1.4cm}p{2cm}p{1cm}p{2cm}p{1cm}p{2cm}p{0.6cm}}
\hline
                         $1/h$    &$\3bare_h\3bar$  &Rate      &$\|e_0\|$  &Rate      &$\|e_b\|_{\E_h}$  &Rate\\
\hline
                                  &$j=1$,~$\ell=4$  &          &           &          &           &\\ \hline
                          16      &1.75E-03         &2.00      &1.52E-05   &3.01      &1.84E-05   &2.93\\
                          32      &4.37E-04         &2.00      &1.89E-06   &3.01      &2.34E-06   &2.97\\
                          64      &1.09E-04         &2.00      &2.36E-07   &3.00      &2.95E-07   &2.99\\
                          128     &2.74E-05         &2.00      &2.94E-08   &3.00      &3.70E-08   &2.99\\
\text{Theory.rate}                &                 &N/A       &           &N/A       &           &N/A\\ \hline
                                  &$j=2$,~$\ell=4$  &          &           &          &           &\\ \hline
                          8       &9.80E-04         &2.97      &7.52E-06   &4.03      &7.99E-06   &3.91\\
                          16      &1.23E-04         &2.99      &4.63E-07   &4.02      &5.12E-07   &3.96\\
                          32      &1.54E-05         &3.00      &2.87E-08   &4.01      &3.23E-08   &3.98\\
                          64      &1.93E-06         &3.00      &1.79E-09   &4.00      &2.03E-09   &3.99\\
\text{Theory.rate}                &                 &N/A       &           &N/A       &           &N/A\\ \hline
                 &$j=3$,~$\ell=4$  &          &           &          &           &\\ \hline
                          8       &9.07E-04         &2.97      &6.61E-06   &4.09      &1.02E-05   &3.99\\
                          16      &1.14E-04         &2.99      &4.00E-07   &4.05      &6.36E-07   &4.00\\
                          32      &1.43E-05         &3.00      &2.47E-08   &4.02      &3.98E-08   &4.00\\
                          64      &1.79E-06         &3.00      &1.53E-09   &4.01      &2.49E-09   &4.00\\
\text{Theory.rate}                &                 &N/A       &           &N/A       &           &N/A\\ \hline
                                  &$j=2$,~$\ell=5$  &          &           &          &           &\\ \hline
                          8       &1.66E-03         &2.97      &1.02E-05   &4.05      &1.25E-05   &3.93\\
                          16      &2.09E-04         &2.99      &6.26E-07   &4.03      &7.95E-07   &3.97\\
                          32      &2.62E-05         &3.00      &3.87E-08   &4.02      &5.01E-08   &3.99\\
                          64      &3.29E-06         &3.00      &2.41E-09   &4.01      &3.14E-09   &4.00\\ \hline
\text{Theory.rate}                &                 &N/A       &           &N/A       &           &N/A\\
\hline
\end{tabular}
\end{center}
\end{table}

\subsection{The {\rm gWG} elements on the uniform rectangular partition with smooth solutions}

In this section, the uniform rectangular partition is employed. 

Table \ref{NE:Squa-1} illustrates the numerical performance of the $P_k(T)/P_{k-1}(\pa T)/[P_{k-1}(T)]^2$  element  for $k=3,4$    with the exact solution $u=\cos(\pi x)\cos(\pi y)$. The stabilization parameters are given by $\rho=1$ and $\gamma=-1$. We observe from Table \ref{NE:Squa-1}  that the theoretical rates of convergence for  $\3bare_h\3bar$, $\|e_0\|$ and $\|e_b\|$ are verified by the numerical results.

\begin{table}[htbp]\centering\scriptsize
\begin{center}
\caption{Numerical errors and convergence rates for the $P_k(T)/P_{k-1}(\pa T)/[P_{k-1}(T)]^2$ element.} \label{NE:Squa-1}
\begin{tabular}{p{1.3cm}p{2cm}p{1.0cm}p{2cm}p{1.0cm}p{2cm}p{0.5cm}}
\hline
                         $1/h$    &$\3bare_h\3bar$  &Rate      &$\|e_0\|$ &Rate      &$\|e_b\|_{\E_h}$  &Rate\\ \hline
                                  &$k=3$            &          &          &          &                  &\\ \hline
                          16      &3.71E-04         &2.97      &3.32E-06  &4.03      &1.17E-05          &3.69\\
                          32      &4.67E-05         &2.99      &2.05E-07  &4.02      &7.98E-07          &3.88\\
                          64      &5.86E-06         &2.99      &1.27E-08  &4.01      &5.17E-08          &3.95\\
                          128     &7.34E-07         &3.00      &7.93E-010 &4.00      &3.29E-09          &3.98\\
\text{Theory.rate}                &                 &3.0       &          &4.0       &                  &4.0\\ \hline
                                  &$k=4$            &          &          &          &                  & \\ \hline
                          8       &1.33E-04         &3.96      &2.30E-06  &5.00      &5.14E-06          &4.29\\
                          16      &8.39E-06         &3.84      &7.15E-08  &5.01      &1.95E-07          &4.72\\
                          32      &5.27E-07         &3.99      &2.23E-09  &5.00      &6.60E-09          &4.89\\
                          64      &3.30E-08         &4.00      &6.94E-011 &5.00      &2.13E-010         &4.95\\
\text{Theory.rate}                &                 &4.0       &          &5.0       &                  &5.0\\
\hline
\end{tabular}
\end{center}
\end{table}

Table \ref{NE:Squa-2} shows some numerical results for the   $P_0(T)/P_j(\pa T)/[P_\ell(T)]^2$ element with different values of $j$ and $\ell$. The exact solution is given by $u=\cos(\pi x)\sin(\pi y)$. The stabilization parameters are $\rho=1$ and $\gamma=0$, respectively. These numerical results suggest that 1) for  the $P_0(T)/P_2(\pa T)/[P_0(T)]^2$ element and the $P_0(T)/P_3(\pa T)/[P_1(T)]^2$ element, the convergence rates   for   $\3bare_h\3bar$, $\|e_0\|$ and $\|e_b\|$  are consistent with the developed theory; 2) For the $P_0(T)/P_0(\pa T)/[P_1(T)]^2$ element, the convergence rates  for  $\3bare_h\3bar$, $\|e_0\|$ and $\|e_b\|$ outperform the theoretical prediction.

\begin{table}[htbp]\centering\scriptsize
\begin{center}
\caption{Numerical errors and convergence rates for the $P_0(T)/P_j(\pa T)/[P_\ell(T)]^2$ elements.} \label{NE:Squa-2}
\begin{tabular}{p{1.0cm}p{2cm}p{1cm}p{2cm}p{1cm}p{2cm}p{0.5cm}}
\hline
                           $1/h$    &$\3bare_h\3bar$  &Rate      &$\|e_0\|$ &Rate      &$\|e_b\|_{\E_h}$ &Rate\\
\hline
                                    &$j=2$,~$\ell=0$  &          &          &          &                 &\\ \hline
                            16      &1.13E-00         &0.48      &7.65E-02  &0.99      &1.23E-01         &0.87\\
                            32      &8.06E-01         &0.49      &3.81E-02  &1.01      &6.56E-02         &0.90\\
                            64      &5.72E-01         &0.50      &1.90E-02  &1.01      &3.42E-02         &0.94\\
                            128     &4.05E-01         &0.50      &9.46E-03  &1.00      &1.75E-02         &0.97\\
\text{Theory.rate}                  &                 &0.5       &          &1.0       &                 &1.0\\ \hline
                                    &$j=3$,~$\ell=1$  &          &          &          &                 &\\ \hline
                            16      &2.00E-01         &0.55      &3.33E-03  &1.06      &6.30E-02         &0.90\\
                            32      &1.42E-01         &0.49      &1.62E-03  &1.04      &3.40E-02         &0.89\\
                            64      &1.02E-01         &0.47      &8.00E-04  &1.02      &1.81E-02         &0.91\\
                            128     &7.39e-02         &0.47      &3.97E-04  &1.01      &9.50E-03         &0.93\\
\text{Theory.rate}                  &                 &0.5       &          &1.0       &                 &1.0\\ \hline
                                    &$j=0$,~$\ell=1$  &          &          &          &                 &\\ \hline
                            16      &3.44E-02         &1.09      &3.33E-03  &1.06      &1.65E-02         &1.09\\
                            32      &1.66E-02         &1.05      &1.62E-03  &1.04      &7.97E-03         &1.05\\
                            64      &8.15E-03         &1.03      &8.00E-04  &1.02      &3.92E-03         &1.02\\
                           128      &4.04E-03         &1.01      &3.97E-04  &1.01      &1.94E-03         &1.01\\
\text{Theory.rate}                  &                 &0.0       &          &0.0       &                 &0.0\\
\hline
\end{tabular}
\end{center}
\end{table}

Table \ref{NE:Squa-3} demonstrates the numerical performance of the $P_5(T)/P_j(\pa T)/[P_\ell(T)]^2$ element  when   $j<\ell$. We choose  $\rho=1$ and $\gamma=-1$. The exact solution is $u=x^2\cos(\pi y)$. We observe from Table \ref{NE:Squa-3}  that (1) the convergence rates for  $\3bare_h\3bar$, $\|e_0\|$ and $\|e_b\|$ consist with the theoretical convergence rates when the cases of $(j,\ell)=(0, 5)$, $(j,\ell)=(1, 4)$    are applied; (2) for the  case of $(j,\ell)=(2, 3)$,  the convergence rates for  $\3bare_h\3bar$, $\|e_0\|$  are  consistent with the theoretical convergence rates while the convergence rate for $\|e_b\|_{\E_h}$ seems to exceed the theoretical convergence rate of ${\cal O}(h^4)$. 

\begin{table}[htbp]\centering\scriptsize
\begin{center}
\caption{Numerical errors and convergence rates for the $P_5(T)/P_j(\pa T)/[P_\ell(T)]^2$ elements.}\label{NE:Squa-3}
\begin{tabular}{p{1.5cm}p{2cm}p{1cm}p{2cm}p{1cm}p{2cm}p{0.5cm}}
\hline
                           $1/h$    &$\3bare_h\3bar$  &Rate      &$\|e_0\|$  &Rate      &$\|e_b\|_{\E_h}$  &Rate\\
\hline
                                    &$j=0$,~$\ell=5$  &          &           &          &                  &\\ \hline
                            8       &1.11E-01         &0.98      &1.59E-03   &1.97      &1.42E-03          &1.94\\
                            16      &5.56E-02         &0.99      &4.00E-04   &1.99      &3.57E-04          &1.99\\
                            32      &2.78E-02         &1.00      &1.00E-04   &2.00      &8.95E-05          &2.00\\
                            64      &1.39E-02         &1.00      &2.50E-05   &2.00      &2.24E-05          &2.00\\
\text{Theory.rate}                  &                 &1.0       &           &2.0       &                  &2.0\\ \hline
                                    &$j=1$,~$\ell=4$  &          &           &          &                  &\\ \hline
                            8       &2.99E-03         &1.91      &2.51E-05   &2.78      &1.25E-04          &2.51\\
                            16      &7.70E-04         &1.96      &3.38E-06   &2.89      &1.77E-05          &2.83\\
                            32      &1.95E-04         &1.98      &4.37E-07   &2.95      &2.32E-06          &2.93\\
                            64      &4.92E-05         &1.99      &5.55E-08   &2.98      &2.97E-07          &2.97\\
\text{Theory.rate}                  &                 &2.0       &           &3.0       &                  &3.0\\ \hline
                                    &$j=2$,~$\ell=3$  &          &           &          &                  &\\ \hline
                            8       &1.27E-04         &3.12      &1.74E-06   &4.19      &8.24E-07          &4.49\\
                            16      &1.55E-05         &3.04      &1.04E-07   &4.06      &2.96E-08          &4.80\\
                            32      &1.92E-06         &3.01      &6.45E-09   &4.01      &9.81E-010         &4.92\\
                            64      &2.40E-07         &3.00      &4.02E-010  &4.00      &3.16E-011         &4.96\\
\text{Theory.rate}                  &                 &3.0       &           &4.0       &                  &4.0\\
\hline
\end{tabular}
\end{center}
\end{table}

Table \ref{NE:Squa-5} reports the errors and convergence rates for the $P_3(T)/P_j(\pa T)/[P_\ell(T)]^2$   element when $j\geq\ell$. The stabilization parameters are $\rho=1$ and $\gamma=-1$. The exact solution is given by $u=x^2\cos(\pi y)$. Table \ref{NE:Squa-5} implies that the convergence rates for $\3bare_h\3bar$, $\|e_0\|$ and $\|e_b\|$   are  consistent with the theoretical convergence rates when the cases of $(j,\ell)=(1, 1)$, $(j,\ell)=(3, 3)$, $(j,\ell)=(4, 3)$, and $(j,\ell)=(2, 1)$    are applied.

\begin{table}[htbp]\centering\scriptsize
\begin{center}
\caption{Numerical errors and convergence rates for the $P_3(T)/P_j(\pa T)/[P_\ell(T)]^2$ elements.}\label{NE:Squa-5}
\begin{tabular}{p{1.3cm}p{2cm}p{1cm}p{2cm}p{1cm}p{2cm}p{0.5cm}}
\hline
                           $1/h$    &$\3bare_h\3bar$  &Rate       &$\|e_0\|$   &Rate      &$\|e_b\|_{\E_h}$  &Rate\\
\hline
                                    &$j=1$,~$\ell=1$  &           &            &          &                  &\\ \hline
                            16      &5.87E-03         &1.91       &6.15E-05    &2.90      &4.70E-04          &2.76\\
                            32      &1.51E-03         &1.95       &7.99E-06    &2.94      &6.32E-05          &2.89\\
                            64      &3.84E-04         &1.98       &1.02E-06    &2.97      &8.18E-06          &2.95\\
                            128     &9.67E-05         &1.99       &1.29E-07    &2.99      &1.04E-06          &2.98\\
\text{Theory.rate}                  &                 &2.0        &            &3.0       &                  &3.0\\ \hline
                                    &$j=3$,~$\ell=3$  &           &            &          &                  &\\ \hline
                            8       &5.93E-05         &3.54       &6.31E-07    &4.91      &4.95E-06          &4.57\\
                            16      &6.09E-06         &3.28       &2.23E-08    &4.83      &2.16E-07          &4.52\\
                            32      &7.12E-07         &3.10       &9.29E-010   &4.58      &1.11E-08          &4.28\\
                            64      &8.75E-08         &3.02       &4.79E-011   &4.28      &6.47E-010         &4.10\\
\text{Theory.rate}                  &                 &3.0        &            &4.0       &                  &4.0\\ \hline
                                    &$j=4$,~$\ell=3$  &           &            &          &                  &\\ \hline
                            8       &6.27E-05         &3.49       &6.31E-07    &4.91      &5.94E-06          &4.41\\
                            16      &6.66E-06         &3.23       &2.23E-08    &4.83      &3.06E-07          &4.28\\
                            32      &7.93E-07         &3.07       &9.29E-010   &4.58      &1.78E-08          &4.10\\
                            64      &9.80E-08         &3.02       &4.79E-011   &4.28      &1.09E-09          &4.02\\
\text{Theory.rate}                  &                 &3.0        &            &4.0       &                  &4.0\\ \hline
                                    &$j=2$,~$\ell=1$  &           &            &          &                  &\\ \hline
                            8       &2.19E-02         &1.82       &4.58E-04    &2.81      &3.20E-03          &2.51\\
                            16      &5.86E-03         &1.90       &6.15E-05    &2.90      &4.71E-04          &2.76\\
                            32      &1.51E-03         &1.95       &7.99E-06    &2.94      &6.35E-05          &2.89\\
                            64      &3.84E-04         &1.98       &1.02E-06    &2.97      &8.21E-06          &2.95\\
\text{Theory.rate}                  &                 &2.0        &            &3.0       &                  &3.0\\
\hline
\end{tabular}
\end{center}
\end{table}

Table \ref{NE:Squa-6} presents some numerical results for the $P_3(T)/P_2(\pa T)/[P_3(T)]^2$ element for $\rho=1$ and different stabilization parameter $\gamma$. The exact solution is $u=x^2\cos(\pi y)$. We observe from Table \ref{NE:Squa-6} that (1) for  $\gamma=-1$, the convergence rates for $\3bare_h\3bar$ and $\|e_b\|$ are in great consistency with the theoretical convergence rates; while  the convergence rate for $\|e_0\|$ is higher than the theoretical prediction; (2) for  $\gamma=-3, 0, 1$, the convergence rates for $\3bare_h\3bar$, $\|e_0\|$ and $\|e_b\|$ are higher than what our theory predicts.

\begin{table}[htbp]\centering\scriptsize
\begin{center}
\caption{Numerical errors and convergence rates for the $P_3(T)/P_2(\pa T)/[P_3(T)]^2$ element.}\label{NE:Squa-6}
\begin{tabular}{p{1.4cm}p{2cm}p{1.0cm}p{2cm}p{1.0cm}p{2cm}p{0.5cm}}
\hline
                         $1/h$    &$\3bare_h\3bar$  &Rate      &$\|e_0\|$  &Rate      &$\|e_b\|_{\E_h}$   &Rate\\
\hline
                                  &$\gamma=-3$      &          &           &          &                   &\\ \hline
                          8       &1.26E-03         &1.79      &2.77E-06   &3.45      &2.13E-05           &3.55\\
                          16      &3.28E-04         &1.94      &2.62E-07   &3.40      &1.50E-06           &3.83\\
                          32      &8.27E-05         &1.99      &2.93E-08   &3.16      &1.17E-07           &3.69\\
                          64      &2.07E-05         &2.00      &3.56E-09   &3.04      &1.14E-08           &3.35\\
\text{Theory.rate}                &                 &2.0       &           &2.0       &                   &2.0\\ \hline
                                  &$\gamma=-1$      &          &           &          &                   &\\ \hline
                          8       &5.41E-05         &3.48      &5.68E-07   &4.97      &2.68E-06           &4.00\\
                          16      &5.87E-06         &3.20      &2.01E-08   &4.82      &1.63E-07           &4.04\\
                          32      &7.04E-07         &3.06      &8.72E-010  &4.52      &1.01E-08           &4.01\\
                          64      &8.73E-08         &3.01      &4.68E-011  &4.22      &6.30E-010          &4.00\\
\text{Theory.rate}                &                 &3.0       &           &4.0       &                   &4.0\\ \hline
                                  &$\gamma=0$       &          &           &          &                   &\\ \hline
                          8       &1.03E-04         &3.47      &6.00E-06   &3.97      &6.88E-07           &4.62\\
                          16      &9.17E-06         &3.49      &3.79E-07   &3.99      &2.38E-08           &4.86\\
                          32      &8.12E-07         &3.50      &2.38E-08   &3.99      &7.73E-010          &4.94\\
                          64      &7.19E-08         &3.50      &1.49E-09   &4.00      &2.46E-011          &4.97\\
\text{Theory.rate}                &                 &2.5       &           &3.0       &                   &3.0\\ \hline
                                  &$\gamma=1$       &          &           &          &                   &\\ \hline
                          8       &3.24E-04         &2.98      &5.95E-05   &2.98      &5.95E-07           &4.56\\
                          16      &4.07E-05         &3.00      &7.47E-06   &2.99      &2.12E-08           &4.81\\
                          32      &5.09E-06         &3.00      &9.35E-07   &3.00      &7.02E-010          &4.92\\
                          64      &6.36E-07         &3.00      &1.17E-07   &3.00      &2.26E-011          &4.96\\
\text{Theory.rate}                &                 &2.0       &           &2.0       &                   &2.0\\
\hline
\end{tabular}
\end{center}
\end{table}

 Table \ref{SFIWGG} illustrates the numerical performance of the {\rm g}WG scheme \eqref{WG-scheme} when $\rho=0$ and the $P_2(T)/P_j(\pa T)/[P_\ell(T)]^2$ element are applied. The exact solution is $u=x^2\cos(\pi y)$. We observe from   Table \ref{SFIWGG} that (1) for the cases of $(j,\ell)=(1, 4)$, $(j,\ell)=(1, 5)$ and $(j,\ell)=(2, 4)$, the convergence rates for $\3bare_h\3bar$, $\|e_0\|$ and $\|e_b\|_{\E_h}$ are in an order of ${\cal O}(h^2)$, ${\cal O}(h^3)$ and ${\cal O}(h^3)$, respectively; (2) for the case   of $(j,\ell)=(1, 3)$, the convergence rates for $\3bare_h\3bar$, $\|e_0\|$ and $\|e_b\|_{\E_h}$ are in an order of ${\cal O}(h^3)$, ${\cal O}(h^4)$ and ${\cal O}(h^4)$, respectively; and (3) for the case   of $(j,\ell)=(0, 4)$, the convergence rates for $\3bare_h\3bar$, $\|e_0\|$ and $\|e_b\|_{\E_h}$ are in an order of ${\cal O}(h)$, ${\cal O}(h^2)$ and ${\cal O}(h^2)$, respectively. Note that our theory established in this paper does not apply to  the case of $\rho=0$.  Readers are encouraged to draw their own conclusions.

\begin{table}[htbp]\centering\scriptsize
\begin{center}
\caption{Numerical errors and convergence rates for the $P_2(T)/P_j(\pa T)/[P_\ell(T)]^2$ elements.}\label{SFIWGG}
\begin{tabular}{p{1.5cm}p{2cm}p{1cm}p{2cm}p{1cm}p{2cm}p{0.5cm}}
\hline
                         $1/h$    &$\3bare_h\3bar$  &Rate      &$\|e_0\|$  &Rate      &$\|e_b\|_{\E_h}$ &Rate\\
\hline
                                  &$j=1$,~$\ell=3$  &          &           &          &                 &\\ \hline
                          16      &2.04E-05         &2.99      &2.18E-07   &3.99      &5.03E-05         &3.99\\
                          32      &2.55E-06         &3.00      &1.37E-08   &4.00      &3.14E-06         &4.00\\
                          64      &3.19E-07         &3.00      &8.55E-010  &4.00      &1.97E-07         &4.00\\
                          128     &3.99E-08         &3.00      &5.38E-011  &3.99      &1.24E-010        &4.00\\
\text{Theory.rate}                &                 &N/A       &           &N/A       &                 &N/A\\ \hline
                                  &$j=1$,~$\ell=4$  &          &           &          &                 &\\ \hline
                          16      &2.88E-03         &1.95      &5.25E-06   &3.01      &6.36E-05         &2.97\\
                          32      &7.33E-04         &1.98      &6.55E-07   &3.00      &8.02E-06         &2.99\\
                          64      &1.85E-04         &1.99      &8.18E-08   &3.00      &1.01E-06         &2.99\\
                          128     &4.64E-05         &1.99      &1.02E-08   &3.00      &1.26E-07         &3.00\\
\text{Theory.rate}                &                 &N/A       &           &N/A       &                 &N/A\\ \hline
                                  &$j=1$,~$\ell=5$  &          &           &          &                 &\\ \hline
                          16      &2.84E-03         &1.92      &5.15E-06   &2.99      &6.07E-05         &2.90\\
                          32      &7.27E-04         &1.96      &6.48E-07   &2.99      &7.84E-06         &2.95\\
                          64      &1.84E-04         &1.98      &8.14E-08   &2.99      &9.96E-07         &2.98\\
                          128     &4.63E-05         &1.99      &1.02E-08   &3.00      &1.25E-07         &2.99\\
\text{Theory.rate}                &                 &N/A       &           &N/A       &                 &N/A\\ \hline
                                  &$j=2$,~$\ell=4$  &          &           &          &                 &\\ \hline
                          16      &2.89E-03         &1.95      &5.44E-06   &3.13      &6.41E-05         &3.00\\
                          32      &7.33E-04         &1.98      &6.61E-07   &3.04      &8.04E-06         &3.00\\
                          64      &1.85E-04         &1.99      &8.20E-08   &3.01      &1.01E-06         &3.00\\
                          128     &4.64E-05         &1.99      &1.02E-08   &3.00      &1.26E-07         &3.00\\
\text{Theory.rate}                &                 &N/A       &           &N/A       &                 &N/A\\ \hline
                                  &$j=0$,~$\ell=4$  &          &           &          &                 &\\ \hline
                          16      &5.55E-02         &0.99      &4.00E-04   &1.99      &3.57E-04         &1.99\\
                          32      &2.78E-02         &1.00      &1.00E-04   &2.00      &8.95E-05         &2.00\\
                          64      &1.39E-02         &1.00      &2.50E-05   &2.00      &2.24E-05         &2.00\\
                          128     &6.95E-03         &1.00      &6.26E-06   &2.00      &5.59E-06         &2.00\\
\text{Theory.rate}                &                 &N/A       &           &N/A       &                 &N/A\\
\hline
\end{tabular}
\end{center}
\end{table}

Table \ref{NE:Squa-7} shows the numerical results for the $P_3(T)/P_2(\pa T)/[P_4(T)]^2$ element with different values of $\rho$ and $\gamma$. We observe from Table \ref{NE:Squa-7} that (1) for the cases of $(\rho, \gamma)=(1, -1)$ and $(\rho, \gamma)=(10^2, -1)$, the rates of convergence for $\3bare_h\3bar$, $\|e_0\|$ and $\|e_b\|_{\E_h}$  are consistent with the theoretical rates of convergence; (2) for the case of $(\rho, \gamma)=(10^{-12},-1)$, the convergence rates for $\3bare_h\3bar$, $\|e_0\|$ and $\|e_b\|_{\E_h}$ seem to outperform the theoretical prediction; (3) for the case of $\rho=0$ where the stabilizer is $0$ for any value of $\gamma$, the convergence rates for $\3bare_h\3bar$, $\|e_0\|$ and $\|e_b\|_{\E_h}$ seem to be in an order of ${\cal O}(h^4)$, ${\cal O}(h^5)$ and ${\cal O}(h^5)$, respectively. Again, our established theory does not cover the case of $\rho=0$. 

\begin{table}[htbp]\centering\scriptsize
\begin{center}
\caption{Numerical errors and convergence rates for the $P_3(T)/P_2(\pa T)/[P_4(T)]^2$ element.}\label{NE:Squa-7}
\begin{tabular}{p{1.5cm}p{2.5cm}p{0.6cm}p{2cm}p{0.6cm}p{2cm}p{0.5cm}}
\hline
                         $1/h$    &$\3bare_h\3bar$&Rate      &$\|e_0\|$   &Rate      &$\|e_b\|_{\E_h}$   &Rate\\
\hline
                                  &$\rho=0$, $\forall \gamma$       &          &            &          &                   &\\ \hline
                          4       &1.95E-04       &3.97      &4.57E-06    &4.79      &1.15E-05           &5.09\\
                          8       &1.24E-05       &3.98      &1.54E-07    &4.89      &3.68E-07           &4.96\\
                          16      &7.82E-07       &3.99      &4.98E-09    &4.95      &1.19E-08           &4.95\\
                          32      &4.91E-08       &3.99      &1.58E-010   &4.98      &3.79E-010          &4.97\\
\text{Theory.rate}                &               &N/A       &            &N/A       &                   &N/A\\ \hline
                                  &$\rho=10^{-12}$,~$\gamma=-1$&&         &          &                   &\\ \hline
                          4       &1.95E-04       &3.97      &4.57E-06    &4.79      &1.15E-05           &5.09\\
                          8       &1.24E-05       &3.98      &1.54E-07    &4.89      &3.68E-07           &4.96\\
                          16      &7.82E-07       &3.99      &4.98E-09    &4.95      &1.19E-08           &4.95\\
                          32      &4.91E-08       &3.99      &1.58E-010   &4.98      &3.79E-010          &4.97\\
\text{Theory.rate}                &               &3.0       &            &4.0       &                   &4.0\\ \hline
                                  &$\rho=1$,~$\gamma=-1$&&                &          &                   &\\ \hline
                          4       &1.13E-03       &3.00      &1.16E-05    &4.30      &1.16E-04           &3.69\\
                          8       &1.43E-04       &2.99      &5.88E-07    &4.15      &7.69E-06           &3.91\\
                          16      &1.79E-05       &2.99      &3.31E-08    &4.07      &4.90E-07           &3.97\\
                          32      &2.25E-06       &3.00      &1.97E-09    &4.03      &3.08E-08           &3.99\\
\text{Theory.rate}                &               &3.0       &            &4.0       &                   &4.0\\ \hline
                                  &$\rho=10^{2}$,~$\gamma=-1$&&           &          &                   &\\ \hline
                          16      &5.06E-04       &3.00      &3.66E-07    &4.20      &3.14E-06           &4.08\\
                          32      &6.32E-05       &3.00      &2.12E-08    &4.11      &1.91E-07           &4.04\\
                          64      &7.91E-06       &3.00      &1.28E-09    &4.05      &1.18E-08           &4.02\\
                          128     &9.88E-07       &3.00      &7.99E-011   &4.01      &7.37E-010          &4.00\\
\text{Theory.rate}                &               &3.0       &            &4.0       &                   &4.0\\ \hline
\end{tabular}
\end{center}
\end{table}

\subsection{Solutions with low regularity}

In this section,  the exact solution is given by $u=x(x-1)y(y-1)(x^2+y^2)^{(-2+\alpha)/2}$ where $\alpha\in (0, 1]$. It is easy to verify $u\in H^{1+\alpha-\tau}(\O)$ for an arbitrary small $\tau>0$. We should point out that our theory is not developed  for the low regularity solution.

Table \ref{low-1} illustrates the numerical performance of the $P_k(T)/P_k(\pa T)/[P_{k-1}(T)]^2$ element for different  $k$ on the uniform triangular partition.  We choose $\alpha=\frac{1}{2}$. This implies the exact solution $u\in H^{1.5-\tau}(\O)$ does not satisfy the required regularity assumption.   The stabilization parameters are $\rho=1$ and $\gamma=-1$. We can observe that the  convergence rates for $\3bare_h\3bar$, $\|e_0\|$ and $\|e_b\|_{\E_h}$
seem to  be in an order of ${\cal O}(h^{0.5})$, ${\cal O}(h^{1.5})$ and ${\cal O}(h^{1.5})$, respectively.  

\begin{table}[htbp]\centering\scriptsize
\begin{center}
\caption{Numerical errors and convergence rates for the $P_k(T)/P_k(\pa T)/[P_{k-1}(T)]^2$ element.} \label{low-1}
\begin{tabular}{p{1.5cm}p{1.9cm}p{1.0cm}p{1.9cm}p{1.0cm}p{1.9cm}p{0.5cm}}
\hline
                         $1/h$    &$\3bare_h\3bar$&Rate      &$\|e_0\|$  &Rate      &$\|e_b\|_{\E_h}$ &Rate\\
\hline
                                  &$k=1$          &          &           &          &                 &\\ \hline
                          8       &8.04E-01       &0.47      &4.32E-02   &1.48      &3.95E-02         &1.54\\
                          16      &5.75E-01       &0.48      &1.54E-02   &1.49      &1.38E-02         &1.52\\
                          32      &4.10E-01       &0.49      &5.49E-03   &1.49      &4.83E-03         &1.51\\
                          64      &2.91E-01       &0.49      &1.95E-03   &1.49      &1.70E-03         &1.51\\
\text{Theory.rate}                &               &N/A       &           &N/A       &                 &N/A\\ \hline
                                  &$k=2$          &          &           &          &                 &\\ \hline
                          8       &6.42E-01       &0.50      &2.40E-02   &1.50      &2.43E-02         &1.54\\
                          16      &4.55E-01       &0.50      &8.47E-03   &1.50      &8.48E-03         &1.52\\
                          32      &3.22E-01       &0.50      &2.99E-03   &1.50      &2.98E-03         &1.51\\
                          64      &2.28E-01       &0.50      &1.06E-03   &1.50      &1.05E-03         &1.50\\
\text{Theory.rate}                &               &N/A       &           &N/A       &                 &N/A\\ \hline
                                  &$k=3$          &          &           &          &                 &\\ \hline
                          8       &5.16E-01       &0.49      &1.55E-02   &1.50      &1.56E-02         &1.53\\
                          16      &3.66E-01       &0.50      &5.48E-03   &1.50      &5.46E-03         &1.51\\
                          32      &2.59E-01       &0.50      &1.94E-03   &1.50      &1.92E-03         &1.51\\
                          64      &1.83E-01       &0.50      &6.86E-04   &1.50      &6.78E-04         &1.50\\
\text{Theory.rate}                &               &N/A       &           &N/A       &                 &N/A\\ \hline
                                  &$k=4$          &          &           &          &                 &\\ \hline
                          8       &3.93E-01       &0.49      &1.01E-02   &1.50      &9.24E-03         &1.52\\
                          16      &2.78E-01       &0.50      &3.57E-03   &1.50      &3.24E-03         &1.51\\
                          32      &1.97E-01       &0.50      &1.26E-03   &1.50      &1.14E-03         &1.51\\
                          64      &1.39E-01       &0.50      &4.47E-04   &1.50      &4.03E-04         &1.50\\
\text{Theory.rate}                &               &N/A       &           &N/A       &                 &N/A\\
\hline
\end{tabular}
\end{center}
\end{table}

Table \ref{low-2} presents some numerical results for the $P_1(T)/P_{0}(\pa T)/[P_{0}(T)]^2$ element  on the uniform rectangular partition. The stabilization parameters are given by $\rho=1$ and $\gamma=-1$. We can observe from Table \ref{low-2} that  (1) for  $\alpha=1$, the convergence rates for $\3bare_h\3bar$, $\|e_0\|$ and $\|e_b\|_{\E_h}$ seem to be in an order of ${\cal O}(h^{0.9})$, ${\cal O}(h^{1.9})$ and ${\cal O}(h^{1.9})$, respectively; (2) for   $\alpha=\frac{1}{2},\frac{1}{8}$ and $\frac{1}{32}$, the convergence rates for $\3bare_h\3bar$, $\|e_0\|$ and $\|e_b\|_{\E_h}$ seem to be in an order of ${\cal O}(h^{\alpha})$, ${\cal O}(h^{1+\alpha})$ and ${\cal O}(h^{1+\alpha})$, respectively.

\begin{table}[htbp]\centering\scriptsize
\begin{center}
\caption{Numerical errors and convergence rates for the $P_1(T)/P_{0}(\pa T)/[P_{0}(T)]^2$ element.
}\label{low-2}
\begin{tabular}{p{1.2cm}p{2cm}p{1.0cm}p{2cm}p{1.0cm}p{2cm}p{0.5cm}}
\hline
                         $1/h$    &$\3bare_h\3bar$&Rate      &$\|e_0\|$  &Rate      &$\|e_b\|_{\E_h}$  &Rate\\ \hline
                                  &$\alpha=1$     &          &           &          &                  &\\ \hline
                          16      &6.41E-02       &0.87      &1.33E-03   &1.83      &2.60E-03          &1.67\\
                          32      &3.46E-02       &0.89      &3.65E-04   &1.86      &7.53E-04          &1.78\\
                          64      &1.85E-02       &0.90      &9.89E-05   &1.88      &2.10E-04          &1.84\\
                          128     &9.82E-03       &0.91      &2.65E-05   &1.90      &5.75E-05          &1.87\\
\text{Theory.rate}                &               &N/A       &           &N/A       &                  &N/A\\  \hline
                                  &$\alpha=1/2$   &          &           &          &                  &\\ \hline
                          16      &5.89E-01       &0.49      &9.84E-03   &1.49      &9.15E-03          &1.41\\
                          32      &4.19E-01       &0.49      &3.50E-03   &1.49      &3.34E-03          &1.45\\
                          64      &2.97E-01       &0.50      &1.24E-03   &1.49      &1.20E-03          &1.48\\
                          128     &2.10E-01       &0.50      &4.40E-04   &1.50      &4.27E-04          &1.49\\
\text{Theory.rate}                &               &N/A       &           &N/A       &                  &N/A\\ \hline
                                  &$\alpha=1/8$   &          &           &          &                  &\\ \hline
                          16      &5.01E-00       &0.12      &8.17E-02   &1.12      &9.43E-02          &1.12\\
                          32      &4.60E-00       &0.12      &3.75E-02   &1.12      &4.33E-02          &1.12\\
                          64      &4.22E-00       &0.12      &1.72E-02   &1.12      &1.99E-02          &1.12\\
                          128     &3.87E-00       &0.12      &7.90E-03   &1.12      &9.11E-03          &1.12\\
\text{Theory.rate}                &               &N/A       &           &N/A       &                  &N/A\\ \hline
                                  &$\alpha=1/32$  &          &           &          &                  &\\ \hline
                          16      &8.80E-00       &0.03      &1.44E-01   &1.03      &1.76E-01          &1.03\\
                          32      &8.63E-00       &0.03      &7.04E-02   &1.03      &8.62E-02          &1.03\\
                          64      &8.45E-00       &0.03      &3.45E-02   &1.03      &4.22E-02          &1.03\\
                          128     &8.27E-00       &0.03      &1.69E-02   &1.03      &2.06E-02          &1.03\\
\text{Theory.rate}                &               &N/A       &           &N/A       &                  &N/A\\
\hline
\end{tabular}
\end{center}
\end{table}

Table \ref{low-3} presents some computational results for the $P_1(T)/P_{0}(\pa T)/[P_{2}(T)]^2$ element  on the uniform triangular partition. We take $\rho=0$ for which there is no theroy available.  We can observe from Table \ref{low-3} that (1) for  $\alpha=1$, the convergence rates for $\3bare_h\3bar$, $\|e_0\|$ and $\|e_b\|_{\E_h}$ seem to be in an order of ${\cal O}(h^{0.9})$, ${\cal O}(h^{1.9})$ and ${\cal O}(h^{1.9})$, respectively; (2) for   $\alpha=\frac{2}{5},\frac{1}{8}$ and $\frac{1}{32}$, the convergence rates for $\3bare_h\3bar$, $\|e_0\|$ and $\|e_b\|_{\E_h}$ seem to be in an order of ${\cal O}(h^{\alpha})$, ${\cal O}(h^{1+\alpha})$ and ${\cal O}(h^{1+\alpha})$, respectively.

\begin{table}[htbp]\centering\scriptsize
\begin{center}
\caption{Numerical errors and convergence rates for the $P_1(T)/P_{0}(\pa T)/[P_{2}(T)]^2$ element.}\label{low-3}
\begin{tabular}{p{1.5cm}p{2cm}p{1.0cm}p{2cm}p{1.0cm}p{2cm}p{0.5cm}}
\hline
                         $1/h$    &$\3bare_h\3bar$&Rate      &$\|e_0\|$  &Rate      &$\|e_b\|_{\E_h}$   &Rate\\ \hline
                                  &$\alpha=1$     &          &           &          &                   &\\ \hline
                          32      &1.44E-02       &0.83      &1.29E-04   &1.77      &2.20E-04           &1.79\\
                          64      &7.92E-03       &0.86      &3.63E-05   &1.82      &6.14E-05           &1.84\\
                          128     &4.30E-03       &0.88      &1.00E-05   &1.86      &1.68E-05           &1.87\\
                          256     &2.31E-03       &0.90      &2.72E-06   &1.88      &4.51E-06           &1.89\\
\text{Theory.rate}                &               &N/A       &           &N/A       &                   &N/A\\  \hline
                                  &$\alpha=2/5$   &          &           &          &                   &\\ \hline
                          16      &2.33E-01       &0.41      &3.40E-03   &1.41      &4.32E-03           &1.41\\
                          32      &1.76E-01       &0.41      &1.28E-03   &1.41      &1.63E-03           &1.41\\
                          64      &1.33E-01       &0.40      &4.84E-04   &1.40      &6.16E-04           &1.40\\
                          128     &1.01E-01       &0.40      &1.83E-04   &1.40      &2.33E-04           &1.40\\
\text{Theory.rate}                &               &N/A       &           &N/A       &                   &N/A\\  \hline
                                  &$\alpha=1/8$   &          &           &          &                   &\\ \hline
                          16      &1.07E-00       &0.13      &1.68E-02   &1.13      &2.25E-02           &1.13\\
                          32      &9.76E-01       &0.13      &7.68E-03   &1.13      &1.03E-02           &1.13\\
                          64      &8.95E-01       &0.13      &3.51E-03   &1.13      &4.71E-03           &1.13\\
                          128     &8.20E-01       &0.13      &1.61E-03   &1.13      &2.52E-03           &1.13\\
\text{Theory.rate}                &               &N/A       &           &N/A       &                   &N/A\\  \hline
                                  &$\alpha=1/32$  &          &           &          &                   &\\ \hline
                          16      &1.78E-00       &0.03      &2.83E-02   &1.03      &3.85E-02           &1.03\\
                          32      &1.74E-00       &0.03      &1.39E-02   &1.03      &1.88E-02           &1.03\\
                          64      &1.70E-00       &0.03      &6.77E-03   &1.03      &9.20E-03           &1.03\\
                          128     &1.67E-00       &0.03      &3.31E-03   &1.03      &4.50E-03           &1.03\\
\text{Theory.rate}                &               &N/A       &           &N/A       &                   &N/A\\
\hline
\end{tabular}
\end{center}
\end{table}

Tables \ref{low-4} demonstrates the numerical performance of the $P_k(T)/P_{k-1}(\pa T)/[P_{k+1}(T)]^2$ element on the uniform rectangular partition. We take $\rho=0$ and $\alpha=\frac{2}{5}$. The exact solution has the regularity of $H^{7/5-\tau}(\O)$ for an arbitrary small $\tau>0$. These numerical results indicate that for $k=1, 2, 3$, the convergence rates for $\3bare_h\3bar$, $\|e_0\|$ and $\|e_b\|_{\E_h}$ seem to be in an order of ${\cal O}(h^{0.4})$, ${\cal O}(h^{1.4})$ and ${\cal O}(h^{1.4})$, respectively.

\begin{table}[htbp]\centering\scriptsize
\begin{center}
\caption{Numerical errors and convergence rates for the $P_k(T)/P_{k-1}(\pa T)/[P_{k+1}(T)]^2$ element.}\label{low-4}
\begin{tabular}{p{1.5cm}p{2cm}p{1.0cm}p{2cm}p{1.0cm}p{2cm}p{0.5cm}}
\hline
                         $1/h$    &$\3bare_h\3bar$&Rate      &$\|e_0\|$  &Rate      &$\|e_b\|_{\E_h}$   &Rate\\ \hline
                                  &$k=1$          &          &           &          &                   &\\ \hline
                          8       &3.27E-01       &0.41      &6.35E-03   &1.39      &3.10E-02           &1.39\\
                          16      &2.47E-01       &0.40      &2.40E-03   &1.40      &1.17E-02           &1.40\\
                          32      &1.87E-01       &0.40      &9.06E-04   &1.41      &4.42E-03           &1.41\\
                          64      &1.42E-01       &0.40      &3.43E-04   &1.40      &1.67E-03           &1.40\\
\text{Theory.rate}                &               &N/A       &           &N/A       &                   &N/A\\  \hline
                                  &$k=2$          &          &           &          &                   &\\ \hline
                          8       &2.99E-01       &0.40      &3.56E-03   &1.40      &1.05E-02           &1.39\\
                          16      &2.26E-01       &0.40      &1.35E-03   &1.40      &4.00E-03           &1.39\\
                          32      &1.71E-01       &0.40      &5.11E-04   &1.40      &1.52E-03           &1.40\\
                          64      &1.30E-01       &0.40      &1.94E-04   &1.40      &5.77E-04           &1.40\\
\text{Theory.rate}                &               &N/A       &           &N/A       &                   &N/A\\  \hline
                                  &$k=3$          &          &           &          &                   &\\ \hline
                          8       &2.98E-01       &0.40      &2.45E-03   &1.40      &5.38E-03           &1.39\\
                          16      &2.26E-01       &0.40      &9.30E-04   &1.40      &2.05E-03           &1.39\\
                          32      &1.71E-01       &0.40      &3.53E-04   &1.40      &7.78E-04           &1.40\\
                          64      &1.30E-01       &0.40      &1.34E-04   &1.40      &2.95E-04           &1.40\\
\text{Theory.rate}                &               &N/A       &           &N/A       &                   &N/A\\
\hline
\end{tabular}
\end{center}
\end{table}

\bigskip
\bigskip

\vfill\eject

\newpage

\end{document}